\newcommand{\TRone}
{\begin{tikzpicture}
[level distance=2mm,
every node/.style={fill=black!100,circle,inner sep=1pt},
level 1/.style={sibling distance=5mm,nodes={fill=black!100}}]
\node {} [grow=up]
child [missing];
\end{tikzpicture}}
\newcommand{\TRtwo}{\begin{tikzpicture}
[level distance=2mm,
every node/.style={fill=black!100,circle,inner sep=1pt},
level 1/.style={sibling distance=5mm,nodes={fill=black!100}}]
\node {} [grow=up]
child {node {}
};
\end{tikzpicture}}
\newcommand{\TRthree}{\begin{tikzpicture}
[level distance=2mm,
every node/.style={fill=black!100,circle,inner sep=1pt},
level 1/.style={sibling distance=5mm,nodes={fill=black!100}}]
\node {} [grow=up]
child {node {}
child {node {}
}
};
\end{tikzpicture}
}
\newcommand{\TRott}{\begin{tikzpicture}
[level distance=2mm,
every node/.style={fill=black!100,circle,inner sep=1pt},
level 1/.style={sibling distance=2mm,nodes={fill=black!100}}]
\node {} [grow=up]
child {node {}
}
child {node {}
};
\end{tikzpicture}
}
\newcommand{\vertiii}[1]{{\left\vert\kern-0.25ex\left\vert\kern-0.25ex\left\vert #1 
    \right\vert\kern-0.25ex\right\vert\kern-0.25ex\right\vert}}
\newtheorem{theorem}{Theorem}[section]
\newtheorem{Def}[theorem]{Definition}
\newtheorem{notation}[theorem]{Notation}
\newtheorem{thm}[theorem]{Theorem}
\newtheorem{prop}[theorem]{Proposition}
\newtheorem{lemma}[theorem]{Lemma}
\newtheorem{hyp}[theorem]{Hypothesis}
\theoremstyle{remark}
\newtheorem{remark}[theorem]{Remark}
\numberwithin{equation}{section}
\renewcommand{\theequation}
{\arabic{section}.\arabic{equation}}
\def\RR{\mathbb{R}}
\def\NN{\mathbb{N}}
\def\bfy{{\bf y}}
\def\bfz{{\bf z}}
\def\bz{{\bf z}}
\newcommand{\cc}{{\mathcal C}}
\newcommand{\cd}{{\mathcal D}}
\newcommand{\cf}{{\mathcal F}}
\newcommand{\ci}{{\mathcal I}}
\newcommand{\cl}{{\mathcal L}}
\newcommand{\cp}{{\mathcal P}}
\newcommand{\ct}{{\mathcal T}}
\newcommand{\cv}{{\mathcal V}}
\def\al{{\alpha}}
\def\ga{{\gamma}}
\newcommand{\lcl}{\left\{}
\newcommand{\rcl}{\right\}}
\newcommand{\lp}{\left(}
\newcommand{\rp}{\right)}
\newcommand{\lc}{\left[}
\newcommand{\rc}{\right]}
\def\l@subsection{\@tocline{2}{0pt}{2.5pc}{5pc}{}}
\begin{document}
\title[Volterra equations driven by rough signals]
{Volterra equations driven by rough signals 2:\\
higher order expansions} 
\date{\today}   

\author[F. Harang \and S. Tindel \and X. Wang]
{Fabian A. Harang \and Samy Tindel \and  Xiaohua Wang}
 \address{Samy Tindel, Xiaohua Wang: 
 Department of Mathematics,
Purdue University,
150 N. University Street,
W. Lafayette, IN 47907,
USA.}
\email{stindel@purdue.edu, wang3296@purdue.edu}

\address{Fabian A. Harang: Department of Mathematics, University of Oslo, P.O. box 1053, Blindern, 0316, Oslo, Norway.}
\email{fabianah@math.uio.no}

\keywords{Volterra equations, Rough path theory, Singular integral equations, Regularity structures}
\thanks{\emph{MSC2020: 45D05, 45G05, 60L20, 60L30, 60L70}\\
  \emph{Acknowledgments:} S. Tindel is supported by the NSF grant  DMS-1952966.  F. Harang gratefully acknowledges financial support from the STORM project 274410, funded by the Research
Council of Norway }

\begin{abstract}
We extend the recently developed rough path theory for Volterra equations from \cite{HarangTindel} to the case of more rough noise and/or more singular Volterra kernels. It was already observed in~\cite{HarangTindel} that the Volterra rough path introduced there did not satisfy any geometric relation, similar to that observed in classical rough path theory. Thus, an extension of the theory to more irregular driving signals requires a deeper understanding of the specific algebraic structure arising in the Volterra rough path. Inspired by the elements of "non-geometric rough paths" developed in  \cite{Gubinelli2010} and \cite{Hairer.Kelly} we provide a simple description of the Volterra rough path and the controlled Volterra process  in terms of rooted trees, and with this description we are able to solve rough volterra equations in driven by more irregular signals. 
\end{abstract}

\maketitle 


\section{Introduction}

\subsection{Background and description of the results}
 Volterra equations of the second kind are typically  given on the form 
\begin{equation}\label{intro volterra}
    y_t=y_0+\int_0^tk_1(t,s)b(y_s)d s+\int_0^t k_2(t,s)\sigma(y_s)d x_s, \quad y_0\in \RR^d 
\end{equation}
where $b$ and $\sigma$ are sufficiently smooth functions, $x:[0,T]\rightarrow \RR^d $ is a $\alpha$-H\"older continuous path with $\alpha\in (0,1)$, and $k_1$ and $k_2$ are two possibly singular kernels, behaving like $|t-s|^{-\gamma}$ for some $\gamma\in [0,1)$ whenever $s\rightarrow t$. Such equations frequently appear in mathematical models for natural or social phenomena which exhibits  some form of memory of its own past as it evolves in time (see e.g. \cite{BarnBenthVer} and the references therein). Most recently, Volterra equations of this form have become very popular in the modelling of stochastic volatility for financial asset prices. In this case the kernels $k_1(t,s)$ and $k_2(t,s)$ are typically assumed to be very singular when $s\rightarrow t$, and the path $x$ is assumed to be a sample path of a Gaussian process (see e.g. \cite{Gath,EuchRosenbaum,BFG-al}).

Whenever the driving noise $x$ is sampled from a Brownian motion (or some other continuous semi-martingale), one may use traditional probabilistic techniques from stochastic analysis (see e.g. \cite{OksZha,Zhang}) in order to make sense of equations like~\eqref{intro volterra}. However, for more general driving noise $x$ with rougher regularity than a Brownian motion, very little is known about solutions to Volterra equations.  Inspired by the theory of rough paths \cite{FriHai}, it is desirable to  solve equation~\eqref{intro volterra} in a purely pathwise sense relying only on the analytic behaviour of the sample paths of $x$. This would allow to remove the probabilistic restrictions imposed by classical stochastic analysis. However, due to the non-local nature of the equations induced by the kernels $k_1$ and $k_2$, the theory of rough paths can not directly be applied in order to solve singular Volterra equations of the form of \eqref{intro volterra}. Indeed, the fundamental algebraic relations satisfied by a a classical rough path do not hold when the signal is influenced by a possibly singular kernel. Let us mention at this point a few contributions in the rough paths realm trying to overcome this obstacle:
\begin{enumerate}[wide, labelwidth=!, labelindent=0pt, label=\textnormal{(\roman*)}]
\setlength\itemsep{.1in}
\item
The articles \cite{DeyaTIndel1,DeyaTIndel2} handle some cases of rough Volterra equations thanks to an elaboration of traditional rough paths elements. However, the analysis was only valid for kernels with no singularities.

\item
The paper \cite{TrabsPromel} focuses on Volterra equations from a para-controlled calculus perspective. This elegant method is unfortunately restricted to first order rough paths type expansions, with inherent limits on both the irregularity of the driving process $x$ and the singularity of the kernel $k$.

\item
The contribution~\cite{BFG-al} investigates Volterra equations through the lens of regularity structures. Although only the strategy of the construction is outlined therein, we believe that a mere application of  regularity structures techniques would only yield local existence and uniqueness results. It should also be mentioned that renormalization techniques are invoked in~\cite{BFG-al}.
\end{enumerate}
\noindent
As the reader might see, the rough paths analysis of Volterra equations is thus far from being complete.

\smallskip

With those preliminary notions in mind, in the recent article \cite{HarangTindel} we initiated a rough path inspired study of singular Volterra equations, in a reduced form of \eqref{intro volterra} given by 
\begin{equation}\label{Volterra integral equation}
u_{t}=u_{0}+\int_{0}^{t}k(t,r)f(u_{r})dx_{r}, 
\end{equation}
where $f$ is a sufficiently regular function, $x$ is a H\"older continuous path, and $k$ is a singular kernel. 
To this end, we define 
    \begin{equation}\label{1a}
\Delta_{n}:=\Delta_{n}\left(\left[a,b\right]\right)=\{\left(x_{1},\ldots   ,x_{n}\right)\in \left[a,b\right]^{n} |\,\, a \leq x_{1}<\cdots   <x_{n}\leq b\}.
\end{equation} 
Next we introduce a class of two parameter paths $z:\Delta_2\rightarrow \RR^d$, needed to capture the possible singularity and regularity imposed by the kernels $k_1$ and $k_2$ and the driving noise $x$ in \eqref{intro volterra}. These paths will then constitute the fundamental building blocks of the framework. The canonical example of such path is given by 
\begin{equation}\label{simple 2path}
    z_t^\tau :=\int_0^t k(\tau,s)dx_s,\quad {\rm where } \quad t\leq \tau \in [0,T].
\end{equation}
For the moment, we may assume that $x$ is a sufficiently regular path $x:[0,T]\rightarrow \RR^d$, and $k(t,s)$ is an integrable (but  possibly singular) kernel when $s\rightarrow t$,  so that the above integral makes pathwise sense. We observe in particular that $t\mapsto z^t_t$ is just a standard Volterra integral (commonly referred to as a  Volterra process in stochastic analysis).  Heuristically one may think that  the regularity arising from the mapping $\tau\mapsto z_t^\tau$ is induced by the behaviour of the kernel $k$ while the regularity of the mapping $t\mapsto z_t^\tau$ is inherited by the regularity of $x$. By construction of a Volterra sewing lemma,  we observed that this was indeed the case, even when $x$ is only $\alpha$-H\"older continuous for some $\alpha\in (0,1)$. In general, we thus define a class of two variables paths in terms of the regularity in its upper and lower variable. This lead us in~\cite{HarangTindel} to introduce  two modifications of the classical H\"older semi-norms. The corresponding processes were then called Volterra paths.

Motivated by processes of the form \eqref{simple 2path}, we constructed Volterra signatures as a collection of iterated integrals with respect to two-parameters Volterra paths. We also introduced a convolution product $\ast$, playing the role as the tensor product $\otimes$ in the classical rough path signature. The signature is then given as a family  three-variable functions $\{(s,t,\tau)\mapsto \bz_{ts}^{n,\tau}\}_{n\in \NN}$, where, in the case of smooth $x$, each term is given by 
\begin{equation}\label{eq:intro signature}
    \bz^{n,\tau}_{ts}=\int_{\Delta_n([s,t])} k(\tau,r_n)\dots k(r_2,r_1)dx_{r_1}\otimes \dots \otimes dx_{r_n},
\end{equation}
where we recall that $\Delta_n([s,t])$ is defined by~\eqref{1a}.
The algebraic structure associated with such iterated integrals resembles that of the tensor algebra of rough path theory, but where the tensor product is replaced by the convolution product. 
Together with Volterra signatures, we defined a class of controlled Volterra paths. Combining those two notions, it allowed to give a pathwise construction of solutions to Volterra equations of the form \eqref{intro volterra}. Similarly to the theory of rough paths, the number of iterated integrals needed in order to give a pathwise definition of a rough Volterra integral is strongly dependent on the regularity of the path $x\in \cc^\alpha([0,T];\RR^d)$ and the singularity of the kernel $k$. Under the assumption that $|k(t,s)|$ behaves like $|t-s|^{-\gamma}$ when $s\rightarrow t$, the investigation in \cite{HarangTindel} was limited to the case when $\alpha-\gamma>\frac{1}{3}$, and thus only considers the first two components of the Volterra signature.

Our article \cite{HarangTindel} therefore left two important open questions, related to both the algebraic and probabilistic perspectives on rough paths theory: 

\noindent
{\em (i) Algebraic aspects:} Are there suitable algebraic relations describing the Volterra signature which are adaptable to prove existence and uniqueness of \eqref{intro volterra} in the case when $\alpha-\gamma<\frac{1}{3}$?

\noindent
{\em (ii) Probabilistic aspects:} For what type of stochastic processes $\{x_t; \, t\in [0,T]\}$ and singular kernels $k$ does there exist a collection of iterated integrals of the form of \eqref{eq:intro signature} almost surely  satisfying the required algebraic and analytic relations?

\noindent
The current article has to be seen as a step towards the answer of the  algebraic problem mentioned above. Namely we investigate the case when $\alpha-\gamma<\frac{1}{3}$, and leave the probabilistic problem for a future work.

The rough Volterra picture gets significantly more involved when introducing a rougher signal $x$ or a more singular kernel $k$. Indeed, the main challenge lies in the fact that the Volterra signature does not satisfy any geometric type property, in contrast with the classical rough paths situation. That is, classical integration by parts does not hold for Volterra iterated integrals, and therefore we do {\em not} have a relation of the form 
\begin{equation*}
    \bz^{2,\tau}_{ts}+(\bz^{2,\tau}_{ts})^T=\bz^{1,\tau}_{ts}\ast \bz^{1,\cdot}_{ts},
\end{equation*}
where $(\cdot)^T$ denotes the transpose.  Thus in order to consider $\alpha-\gamma$ lower than $\frac{1}{3}$, one needs to resort to different techniques than what is standard in the theory of rough paths. 

 Inspired by Martin Hairer's theory of regularity structures, we will in this article show that the Volterra signature is given with a Hopf algebraic type structure. Hence with the help of a description by rooted trees for the Volterra rough path, we are able to describe the necessary algebraic relations desired for the Volterra rough stochastic calculus.  
 We will limit the scope of the current article to the case when $\alpha-\gamma>\frac{1}{4}$, and show that in order to prove existence and uniqueness  of \eqref{intro volterra} in a "Volterra rough path" sense, one needs to introduce two more iterated integrals, as well as two more controlled Volterra derivatives than what is needed in the case $\alpha-\gamma>\frac{1}{3}$. 
We believe that the techniques developed here are an important stepping stone towards the goal of providing a rough paths framework for Volterra equations of the form of \eqref{intro volterra} in the general regime $\alpha-\gamma>0$.

\subsection{Organization of the paper}
In section \ref{section 2} we provide the necessary assumptions and preliminary results from \cite{HarangTindel}. In particular, we give the definition of Volterra paths, recall  the Volterra sewing lemma and the convolution product between Volterra paths. Those results will play a central role for our subsequent analysis. 
Section \ref{Sec: 3 volterra rougher} is devoted to the  extension of the sewing lemma from the previous section to the case of two singularities, and we will apply this to create a third order convolution product between Volterra rough paths. 
In Section \ref{sec: 4 stochastic calculus for volterra rough paths} we motivate the use of rooted trees to describe the Volterra rough path, and give a definition of controlled Volterra processes analogously. With this definition we prove both the convergence of a rough Volterra integral with respect to controlled Volterra paths, and that compositions of (sufficiently) smooth functions  with a controlled Volterra path are again controlled Volterra paths. We conclude Section~\ref{sec: 4 stochastic calculus for volterra rough paths} with a proof of existence and uniqueness of Volterra equations driven by rough signals in the rougher regime.

\subsection{Frequently used notation}
We reserve the letter $E$ to denote  a Banach space, and we let the norm on $E$ be denoted by $|\cdot|_E$. In subsequent sections, $E$ will typically be given as $\RR^d$ or $\cl(\RR^m,\RR^d)$ (The space of linear operators from $\RR^m$ to $\RR^d$).  We will write $a\lesssim b$, whenever there exists a constant $C>0$ (not depending on any parameters of significance) such that $a\leq C b$. The space of continuous functions $f:X\rightarrow Y$ is denoted by $\cc(X,Y)$. Whenever the codomain is not important, we use the shorter notation  $\cc(X)$.  To denote that there exists a  constant $C$ which depends on a parameter $p$, we write $a\lesssim_p b$.  For a one parameter path $f:[0,T]\rightarrow E$, we write $f_{ts}:=f_t-f_s$, with a slight abuse of notation, we will later also use this notation for two variable functions of the form $f:[0,T]^2\rightarrow \RR^d$, where $f_{ts}$ means evaluation in the point $(s,t)\in [0,T]^2$. We believe that it will always be clear from context what is meant. 
For $\alpha\in (0,1)$, we denote by $\cc^\alpha([0,T];E)$ the standard space of $\alpha$-H\"older continuous functions from $[0,T]$ into $E$, equipped with the norm $\|f\|_{\cc^\alpha}:=|f_0|_E+\|f\|_{\alpha}$, where $\|f\|_\alpha$ denotes the classical H\"older seminorm given by 
\begin{equation}
\|f\|_{\alpha}:=\sup_{(s,t)\in \Delta_2}\frac{|f_{ts}|}{|t-s|^\gamma}.
\end{equation} 
 Whenever the domain and codomain is otherwise clear from the context, we will use the short hand notation $\cc^\alpha$. 
We recall here that the $n$-simplex was already defined in \eqref{1a}.
Throughout the article, we will frequently use the following simple bounds: for $(s,u,t)\in \Delta_3$ and $\gamma>0$, then 
\begin{equation*}
|t-u|^\gamma\lesssim |t-s|^\gamma \quad {\rm and}\quad |t-s|^{-\gamma}\lesssim |t-u|^{-\gamma}. 
\end{equation*}

\section{Assumptions and fundamentals of Volterra Rough Paths }\label{section 2}
We will start by presenting the necessary assumptions on the Volterra kernel $k$, as well as the driving noise $x$ in \eqref{Volterra integral equation}. A full description (together with proofs) for the results recalled in this section can be found in \cite{HarangTindel}. 

Let us begin to present a working hypothesis for the type of kernels $k$, seen in \eqref{Volterra integral equation}, that we will consider in this article. 

\begin{hyp}\label{hyp a}
Let $k$ be a kernel $k:\Delta_{2}\rightarrow\mathbb{R}$, we assume that there exists $\gamma\in (0,1)$ such that for all $\left(s,r,q,\tau\right)\in\Delta_{4}\left(\left[0,T\right]\right)$ and $\eta,\beta\in [0,1]$
 we have
\begin{flalign*}
\left|k\left(\tau,r\right)\right|&\lesssim\left|\tau-r\right|^{-\gamma}\notag
\\\left|k\left(\tau,r\right)-k\left(q,r\right)\right|&\lesssim\left|q-r\right|^{-\gamma-\eta}\left|\tau-q\right|^{\eta}
\\\left|k\left(\tau,r\right)-k\left(\tau,s\right)\right|&\lesssim \left|\tau-r\right|^{-\gamma-\eta}\left|r -s\right|^{\eta}
\\\left|k\left(\tau,r\right)-k\left(q,r\right)-k\left(\tau,s\right)+k\left(q,s\right)\right|&\lesssim \left|q-r\right|^{-\gamma-\beta}\left|r-s\right|^{\beta} 
\\\left|k\left(\tau,r\right)-k\left(q,r\right)-k\left(\tau,s\right)+k\left(q,s\right)\right|&\lesssim 
\left|q-r\right|^{-\gamma-\eta}\left|\tau-q\right|^{\eta}.
\end{flalign*} 
In the sequel a kernel fulfilling condition the Hypothesis \ref{hyp a} will be called Volterra kernel of order~$\gamma$. 
\end{hyp}
\begin{remark}
We limit our investigations in this article to the case of real valued Volterra kernels $k$ for conciseness. The Volterra sewing lemma, and most results relating to Volterra rough paths are however easily extended to general Volterra kernels $k:\Delta_2\rightarrow \cl(E)$ for some Banach space $E$, by appropriate change of the bounds in \ref{hyp a}, see e.g. \cite{HarangCatellier2021,BenthHarang2020} where the Volterra sewing lemma from \cite{HarangTindel} is readily applied in an infinite dimensional setting. 
\end{remark}


As mentioned in the introduction, one of the key ingredients in \cite{HarangTindel} is to consider processes $(t,\tau)\mapsto z^{\tau}_{t}$ indexed by $\Delta_{2}$ (where we recall that the simplex $\Delta_{n}$ was defined in \eqref{1a}).
We begin this section with a  recollection of the H\"older space containing such processes and  introduce  the Volterra sewing Lemma \ref{(Volterra sewing lemma)}, we will then move on to introduce the convolution product and discuss its relation with the Volterra signature. 

\subsection{The space of Volterra paths}We begin this section  by recalling the topology used to measure the regularity of processes like \eqref{simple 2path}, and give a simple motivation for the introduction of this type of space. 

\begin{Def}\label{Volterra space}
Let $E$ be a Banach space, and consider  $\left(\alpha,\gamma\right)\in\left(0,1\right)^{2}$ with $\alpha-\gamma>0$. We define the space of Volterra paths of index $(\alpha,\gamma)$, denoted by $\mathcal{V}^{(\alpha,\gamma)}(\Delta_2;\RR^d)$, as the set of functions $z:\Delta_{2}\rightarrow E,$ given by  $(t,\tau)\mapsto z_{t}^{\tau}$, with the condition $z_{0}^{\tau}=z_{0}\in E$ for all $\tau\in(0,T]$, and  satisfying  
\begin{equation}\label{Volterra norm}
\|z\|_{(\alpha,\gamma)}=\|z\|_{(\alpha,\gamma),1}+\|z\|_{(\alpha,\gamma),1,2} < \infty.
\end{equation}
In \eqref{Volterra norm}, the 1-norm and (1,2)-norm are respectively defined as follows:
\begin{flalign}\label{V norm}
\| z\|_{\left(\alpha,\gamma\right),1}&:=\sup_{\left(s,t,\tau\right)\in\Delta_{3}}
\frac{|z_{ts}^{\tau}|_E}{\lc |\tau-t|^{-\gamma}|t-s|^{\alpha}\rc\wedge |\tau-s|^{\alpha-\gamma}},\\
\label{V norm b}
\| z\|_{\left(\alpha,\gamma\right),1,2}&:=\sup_{\substack{\left(s,t,\tau^{\prime},\tau\right)\in\Delta_{4}\\\eta\in [0,1],\zeta\in[0,\alpha-\gamma)}}\frac{|z_{ts}^{\tau\tau^{\prime}}|_E}{|\tau-\tau^{\prime}|^{\eta}|\tau^{\prime}-t|^{-\eta+\zeta}\left(\lc |\tau^{\prime}-t|^{-\gamma-\zeta}|t-s|^{\alpha} \rc\wedge |\tau^{\prime}-s|^{\alpha-\gamma-\zeta}\right)},
\end{flalign}
with the convention $z^{\tau}_{ts}=z^{\tau}_{t}-z^{\tau}_{s}$ and $z^{\tau\tau^{\prime}}_{s}=z^{\tau}_{s}-z^{\tau^{\prime}}_{s}$. In addition, under the mapping 
\[
z\mapsto|z_{0}|+\| z\|_{\left(\alpha,\gamma\right)},
\]
 the space $\mathcal{V}^{\left(\alpha,\gamma\right)}(\Delta_2;E)$ is a Banach
space. 
\end{Def}

Whenever the domain and codomain  is otherwise clear from the context, we will simply write $\cv^{(\alpha,\gamma)}:=\cv^{(\alpha,\gamma)}(\Delta_n;\RR^d)$.  Throughout the article, we will typically let the Banach space $E$ be given by $\RR^d$ or $\cl(\RR^d)$. 
\begin{remark}\label{rek22}
As will be proved in  Theorem \ref{thm:Regularity of Volterra path} below, the typical example of path in $\mathcal{V}^{(\al,\ga)}$ is given by $z^{\tau}_{t}$ defined as in \eqref{simple 2path}, with suitable assumption on $k$ and $x$. Note also that $\cc^\alpha([0,1];\RR^d)\subset \cv^{(\alpha,\gamma)}(\Delta_2([0,1]);\RR^d)$ for any $\gamma\in [0,1)$. Indeed, for a path $x\in \cc^\alpha$, define $z_t^\tau=x_t$. Using that $|t-s|^\alpha\leq|\tau-s|^\alpha$, it is readily checked that $|z_{ts}^\tau| \lesssim |\tau-t|^{-\gamma}|t-s|^{\alpha}\wedge  |\tau-s|^\alpha$. Furthermore,  $z^{\tau\tau'}_{ts}=0$, and thus $\|z\|_{(\alpha,\gamma)}<\infty$ for any $\gamma\in (0,1)$.  
\end{remark}

\begin{remark}\label{rem: 3 vb function}
We will also consider functions $u:\Delta_3\rightarrow \RR^d$, which, with a slight abuse of notation, will be denoted by the mapping $(s,t,\tau)\mapsto u_{ts}^\tau$. We then define the space $\cv^{(\alpha,\gamma)}(\Delta_3;\RR^d)$ analogously as in Definition \ref{Volterra space}, but where the increments of the path $(t,\tau)\mapsto z_t^\tau$ in the lower variable, appearing in \eqref{V norm} and \eqref{V norm b}, is simply replaced by the evaluation $u_{ts}^\tau$ and $u_{ts}^\tau-u_{ts}^{\tau'}$ respectively.  
\end{remark}
\begin{remark}\label{rem:holdeer embedding}
Similarly as for the classical H\"older spaces, we have the following elementary embedding: for $\beta<\alpha\in (0,1)$, with $\beta-\gamma>0$, we have $\cv^{(\alpha,\gamma)}\hookrightarrow \cv^{(\beta,\gamma)}$. Indeed, suppose $y\in \cv^{(\alpha,\gamma)}$, it is readily checked that 
\begin{equation*}
    |y_{ts}^\tau|\lesssim |\tau-t|^{-\gamma}|t-s|^\alpha|\wedge |\tau-s|^{\alpha-\gamma}\leq T^{\alpha-\beta}( |\tau-t|^{-\gamma}|t-s|^{\beta}\wedge |\tau-s|^{\beta-\gamma}),
\end{equation*}
and thus $\|y\|_{(\beta,\gamma),1}\leq T^{\alpha-\beta}\|y\|_{(\alpha,\gamma),1}$. Similarly, one can also show that $\|y\|_{(\beta,\gamma),1,2}\leq T^{\alpha-\beta}\|y\|_{(\alpha,\gamma),1,2}$, and thus $\|y\|_{(\beta,\gamma)}\leq T^{\alpha-\beta}\|y\|_{(\alpha,\gamma)}$. 
\end{remark}

The following lemma gives useful embedding results for $\cv^{(\alpha,\gamma)}$ related to variations in the singularity parameter $\gamma$. 
\begin{lemma}\label{relation of space}
Let $\alpha,\gamma\in (0,1)$ with $\alpha>\gamma$, and recall that $\rho =\alpha-\gamma$. Then for the spaces $\mathcal{V}^{(\alpha,\gamma)}$ given in Definition \ref{Volterra space}, the following inclusion holds true:
\begin{equation}\label{3117}
\mathcal{V}^{(3\rho+\ga,\ga)}\subset \mathcal{V}^{(3\rho+2\ga,2\ga)}\subset \mathcal{V}^{(3\rho+3\ga,3\ga)}.
\end{equation}
\end{lemma}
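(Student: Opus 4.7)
The plan is to prove both inclusions in~\eqref{3117} by comparing the corresponding seminorms. Since both have identical structure, I focus on the generic step $\mathcal{V}^{(3\rho+j\gamma,\,j\gamma)} \subset \mathcal{V}^{(3\rho+(j+1)\gamma,\,(j+1)\gamma)}$ for $j\in\{1,2\}$, aiming to establish
\[
\|z\|_{(3\rho+(j+1)\gamma,\,(j+1)\gamma)} \le C(T,\gamma,\rho)\, \|z\|_{(3\rho+j\gamma,\,j\gamma)}.
\]

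The starting observation is that $\alpha-\gamma = 3\rho$ is invariant under the shift $(\alpha,\gamma) \mapsto (\alpha+\gamma, \gamma+\gamma)$. Hence, in the denominators of the seminorms~\eqref{V norm} and~\eqref{V norm b}, the second argument of the inner minimum equals $|\tau-s|^{3\rho}$ (resp.\ $|\tau'-s|^{3\rho-\zeta}$) in all three spaces, and the admissible range of the auxiliary parameter $\zeta$ is the common interval $[0,3\rho)$. Consequently, the proof reduces to a careful comparison of the first arguments of the minima, namely $|\tau-t|^{-j\gamma}|t-s|^{3\rho+j\gamma}$ versus $|\tau-t|^{-(j+1)\gamma}|t-s|^{3\rho+(j+1)\gamma}$ (together with their $(1,2)$-analogues).

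For the $1$-seminorm, the plan is a case split over the simplex $\Delta_3$. When $|t-s|\ge|\tau-t|$, the elementary inequality $(|t-s|/|\tau-t|)^\gamma\ge 1$ yields
\[
|\tau-t|^{-(j+1)\gamma}|t-s|^{3\rho+(j+1)\gamma} \ge |\tau-t|^{-j\gamma}|t-s|^{3\rho+j\gamma},
\]
so the first argument of the minimum is monotone in $j$, and combined with the common $|\tau-s|^{3\rho}$ term this yields the desired pointwise comparison directly. When $|t-s|<|\tau-t|$, one has $|\tau-s|\le 2|\tau-t|$, and here I would invoke the bound $|z_{ts}^\tau|\le \|z\|_{(3\rho+j\gamma,j\gamma),1}\,|\tau-s|^{3\rho}$ coming from the other argument of the minimum in $\|\cdot\|_{(3\rho+j\gamma,j\gamma),1}$; coupled with an interpolation between the two arguments of the minimum and the constraint $|t-s|, |\tau-s|\le T$, this allows one to absorb the residual factor into a multiplicative constant depending only on $T$. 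For the $(1,2)$-seminorm an analogous strategy is carried out, with the freedom in the parameter $\zeta$ playing a central role: a shift of $\zeta$ by $\gamma$ on the $(3\rho+j\gamma, j\gamma)$-side matches the effective singularity exponent appearing on the $(3\rho+(j+1)\gamma,(j+1)\gamma)$-side, reducing the problem to the same type of minimum comparison as in the $1$-seminorm.

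The main technical difficulty is precisely the regime $|t-s|\ll|\tau-t|$ (and its $(1,2)$-analogue): here the $(j+1)$-th denominator is strictly smaller than the $j$-th by the possibly unbounded factor $(|\tau-t|/|t-s|)^\gamma$, so a straightforward pointwise comparison fails. The key will be to exploit the minimum structure of the $j$-th denominator — specifically, the $|\tau-s|^{3\rho}$ component, which gives an absolute upper bound on $|z_{ts}^\tau|$ independent of $|t-s|/|\tau-t|$ — together with the boundedness of the simplex via $|\tau-s|\le T$, in order to close the estimate with a constant growing only polynomially in $T$.
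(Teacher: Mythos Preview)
Your case split is the same as the paper's, and you correctly isolate the regime $|t-s|\ll|\tau-t|$ as the only nontrivial one. The difficulty is that your proposed rescue there cannot succeed, because the inclusion \eqref{3117} as written is actually false. Concretely, take $z_t^\tau:=x_t$ with $x\in\mathcal{C}^{3\rho+\gamma}$ (for parameters with $3\rho+\gamma<1$, say $x_t=t^{3\rho+\gamma}$); by Remark~\ref{rek22} one has $z\in\mathcal{V}^{(3\rho+\gamma,\gamma)}$. At $s=0$, $t=\varepsilon$, $\tau=1$ the $(3\rho+2\gamma,2\gamma)$-denominator is $\bigl[(1-\varepsilon)^{-2\gamma}\varepsilon^{3\rho+2\gamma}\bigr]\wedge 1\sim\varepsilon^{3\rho+2\gamma}$, whereas $|z_{ts}^\tau|=\varepsilon^{3\rho+\gamma}$, so the ratio diverges like $\varepsilon^{-\gamma}$; hence $z\notin\mathcal{V}^{(3\rho+2\gamma,2\gamma)}$. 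Your idea of invoking the $|\tau-s|^{3\rho}$ branch of the $j$-th minimum only yields $|z_{ts}^\tau|\lesssim 1$ here, which no $T$-dependent constant can convert into a bound by $\varepsilon^{3\rho+2\gamma}$.

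What the paper's argument actually establishes is the pointwise comparison $D_{j+1}\lesssim D_j$ of denominators (this is precisely \eqref{relation of norm} together with the two cases that follow). That inequality is correct, but it gives the \emph{opposite} norm comparison $\|z\|_{(3\rho+j\gamma,j\gamma)}\lesssim\|z\|_{(3\rho+(j+1)\gamma,(j+1)\gamma)}$ and hence the reverse chain $\mathcal{V}^{(3\rho+3\gamma,3\gamma)}\subset\mathcal{V}^{(3\rho+2\gamma,2\gamma)}\subset\mathcal{V}^{(3\rho+\gamma,\gamma)}$: a smaller denominator makes the supremum larger, not smaller, so the paper's reduction of \eqref{relation of 1 norm} to \eqref{relation of norm} has its direction flipped. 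In short, your strategy and the paper's coincide and the computation is right, but the stated inclusions in the lemma are the wrong way round; the ``main technical difficulty'' you flag is not a technicality but a genuine obstruction.
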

\begin{proof}
We will prove the second relation: $\mathcal{V}^{(3\rho+2\ga,2\ga)}\subset \mathcal{V}^{(3\rho+3\ga,3\ga)}$, the first relation being proved in a similar way. Moreover, in order to prove that $\mathcal{V}^{(3\rho+2\ga,2\ga)}\subset \mathcal{V}^{(3\rho+3\ga,3\ga)}$, we will show that $\|z\|_{(3\rho+3\ga,3\ga)}\leq \|z\|_{(3\rho+2\ga,2\ga)}$, for the $(\alpha,\gamma)-$norms introduced in Definition \ref{Volterra space}. Also recall that the $(\alpha,\gamma)-$norms are defined by \eqref{V norm} and \eqref{V norm b}. For sake of conciseness we will just prove that 
\begin{equation}\label{relation of 1 norm}
\|z\|_{(3\rho+3\ga,3\ga),1}\leq \|z\|_{(3\rho+2\ga,2\ga),1},
\end{equation}
and leave the similar bound for the $(1,2)-$norm to the reader. 

In order to prove \eqref{relation of 1 norm}, we refer again to \eqref{V norm}. From this definition, it is readily checked that \eqref{relation of 1 norm} can be reduced to prove the following relation:
\begin{equation}\label{relation of norm}
|\tau-t|^{-3\ga}|t-s|^{3\rho+3\ga}\wedge|\tau-s|^{3\rho}\lesssim |\tau-t|^{-2\ga}|t-s|^{3\rho+2\ga}\wedge|\tau-s|^{3\rho}.
\end{equation}
The proof of \eqref{relation of norm} will be split in 2 cases, according to the respective values of $|\tau-t|$ and $|t-s|$. In the sequel $C_{1}$ designates a strictly positive constant.
\smallskip

\noindent
\emph{Case 1: $|\tau-t|\leq C_{1}|t-s|$ .}
Let us write 
\[
|\tau-s|^{3\rho}=|\tau-s|^{3\rho+2\ga}|\tau-s|^{-2\ga} .
\]
Then if $|\tau-t|\leq C_{1}|t-s|$, one has $|\tau-s|^{3\rho+2\ga}=|\tau-t+t-s|^{3\rho+2\ga}\lesssim |t-s|^{3\rho+2\ga}$. Hence we get
\begin{flalign}\label{left 1}
|\tau-s|^{3\rho} \lesssim |t-s|^{3\rho+2\ga}|\tau-s|^{-2\ga}\lesssim |t-s|^{3\rho+2\ga}|\tau-t|^{-2\ga}.
\end{flalign}
Relation \eqref{relation of norm} is then immediately seem from \eqref{left 1}.
\smallskip

\noindent
\emph{Case 2: $|\tau-t|> C_{1}|t-s|$ .} In this case write
\begin{equation*}
 |t-s|^{3\rho+2\ga}|\tau-t|^{-2\ga}= |t-s|^{3\rho+3\ga}|\tau-t|^{-3\ga}\left(\frac{|\tau-t|}{|t-s|}\right)^{\ga}.
\end{equation*}
Then resort to the fact that  $|\tau-t|\geq C_{1}|t-s|$ in order to get $|\tau-t|^{\gamma}|t-s|^{-\gamma}\geq C_{1}^{\gamma}$. This yields 
\[
|t-s|^{3\rho+2\ga}|t-s|^{-2\ga}\gtrsim |t-s|^{3\rho+3\gamma}|\tau-t|^{-3\ga},
\]
from which \eqref{relation of norm} is readily checked. 

Combining Case 1 and Case 2, we have thus finished the proof of \eqref{relation of norm}. As mentioned above, this implies that \eqref{relation of 1 norm} is true and achieves our claim \eqref{3117}. 
\end{proof}

\subsection{Volterra Sewing lemma}
We begin with a recollection of the space of abstract Volterra integrands, to which the Volterra sewing Lemma \ref{(Volterra sewing lemma)} will apply. The typical path in this space exhibits different types of regularities/singularities in its arguments, similarly to Definition \ref{Volterra space}. As a necessary ingredient in the subsequent definition  we introduce a particular notation, which will frequently be used throughout the article. 

\begin{notation}\label{notation 23}
Recall that the simplex $\Delta_{n}$ is defined by \eqref{1a}. For a path $g: \Delta_{2}\rightarrow\mathbb{R}^{d}$ and $(s,u,t)\in\Delta_{3}$, we set 
\begin{equation}\label{2b}
\delta_{u}g_{ts}=g_{ts}-g_{tu}-g_{us}
\end{equation}
We will consider $\delta$ as an operator from $\mathcal{C}(\Delta_{2})$ to $\mathcal{C}(\Delta_{3})$, where $\mathcal{C}(\Delta_{n})$ denotes the spaces of continuous functions on $\Delta_{n}$.
\end{notation}

\begin{Def}\label{abstract integrnds space}
Let $\alpha\in\left(0,1\right)$, $\gamma\in (0,1)$ with $\alpha-\gamma>0$. We also consider two coefficients  $\kappa \in (0,\infty)$ and $\beta\in\left(1,\infty\right)$. Denote by $\mathcal{V}^{(\alpha,\gamma)(\beta,\kappa)}\left(\Delta_{3};\mathbb{R}^{d}\right)$, the space of all functions $\varXi:\Delta_{3}\rightarrow \mathbb{R}^{d}$
such that 
\begin{equation}
\|\varXi\|_{\mathcal{V}^{(\alpha,\gamma)(\beta,\kappa)}}=\|\varXi\|_{\left(\alpha,\gamma\right)}+\|\delta\varXi\|_{\left(\beta,\kappa\right)}<\infty,
\end{equation}
where $\delta$  is introduced in \eqref{2b}, where the quantity $\|\varXi\|_{(\alpha,\gamma)}$ is given by \eqref{Volterra norm}  (see also Remark \ref{rem: 3 vb function}) and where 
\begin{equation}
\|\delta\varXi\|_{(\beta,\kappa)}=\|\delta\varXi\|_{(\beta,\kappa),1}+\|\delta\varXi\|_{(\beta,\kappa),1,2}
\end{equation}
with
\begin{flalign}
\|\delta\varXi\|_{\left(\beta,\kappa\right),1}&:=\sup_{\left(s,m,t,\tau\right)\in\Delta_{4}}\frac{|\delta_{m}\varXi_{ts}^{\tau}|}{[|\tau-t|^{-\kappa}|t-s|^{\beta}]\wedge |\tau-s|^{\beta -\kappa}}
\\
\|\delta\varXi\|_{\left(\beta,\kappa\right),1,2}&:=\sup_{\substack{\left(s,m,t,\tau^\prime,\tau\right)\in\Delta_{5} \\ \eta\in[0,1],\zeta\in[0,\beta-\kappa)}}\frac{|\delta_{m}\varXi_{ts}^{\tau\tau^\prime}|}{|\tau-\tau^{\prime}|^{\eta}|\tau^{\prime}-t|^{-\eta+\zeta}\left([|\tau^{\prime}-t|^{-\kappa-\zeta}|t-s|^{\beta} ]\wedge |\tau^{\prime}-s|^{\beta-\kappa-\zeta}\right)} .
\end{flalign}
In the sequel the space $\mathcal{V}^{(\alpha,\gamma)(\beta,\kappa)}$ will be our space of abstract Volterra integrands. 
\end{Def}
With these two Volterra spaces in hand, we are ready to recall the Volterra sewing Lemma which can be found, together with a full proof, in \cite[Lemma 21]{HarangTindel}.
\begin{lemma}\label{(Volterra sewing lemma)}
Consider four exponents $\beta\in (1,\infty)$, $\kappa\in (0,1)$, $\alpha\in\left(0,1\right)$ and $\gamma\in(0,1)$ such that $\beta-\kappa\geq \alpha-\gamma >0$.  Let $\mathcal{V}^{(\alpha,\gamma)(\beta,\kappa)}$  and $\mathcal{V}^{\left(\al,\gamma\right)}$ be the spaces given in Definition \ref{abstract integrnds space} and  Definition \ref{Volterra space} respectively. Then there exists a linear continuous map $\mathcal{I}:\mathcal{V}^{(\alpha,\gamma)(\beta,\kappa)}\left(\Delta_{3};\mathbb{R}^{d}\right)\rightarrow\mathcal{V}^{\left(\alpha,\gamma\right)}\left(\Delta_{2};\mathbb{R}^{d}\right)$
such that the following holds true.
\begin{enumerate}[wide, labelwidth=!, labelindent=0pt, label=(\roman*)]
\item
The quantity $\mathcal{I}(\varXi^{\tau})_{ts}:=\lim_{|\mathcal{P}|\rightarrow 0} \sum_{[u,v]\in\mathcal{P}} \varXi_{vu}^{\tau}$ exists for all $(s,t,\tau)\in \Delta_{3}$, where $\mathcal{P}$  is a generic partition of $[s,t]$  and $|\mathcal{P}|$  denotes the mesh size of the partition. Furthermore, we define $\ci(\varXi^\tau)_{t}:=\ci(\varXi^\tau)_{t0}$, and we have that $\ci(\varXi^\tau)_{ts}=\ci(\varXi^\tau)_{t}-\ci(\varXi^\tau)_{s}$. 

\item
 For all $(s,t,\tau)\in \Delta_{3}$ we have
\begin{align}\label{sy lemma bound}
\left|\mathcal{I}\left(\varXi^{\tau}\right)_{ts}-\varXi_{ts}^{\tau}\right|\lesssim & \|\delta\varXi\|_{\left(\beta,\kappa\right),1}\left(\left[\left|\tau-t\right|^{-\kappa}\left|t-s\right|^{\beta}\right]\wedge\left|\tau-s\right|^{\beta-\kappa}\right),
\end{align} 
while for $(s,t,\tau^{\prime},\tau)\in \Delta_{4}$ we get
\begin{equation}\label{sy lemma upper arg bound}
\left|\mathcal{I}(\varXi^{\tau\tau^\prime})_{ts}-\varXi_{ts}^{\tau\tau^\prime}\right|\lesssim  \|\delta\varXi\|_{\left(\beta,\kappa\right),1,2}\left|\tau-\tau^{\prime}\right|^{\eta}\left|\tau^{\prime}-t\right|^{-\eta+\zeta}\left(\left[\left|\tau^{\prime}-t\right|^{-\kappa-\zeta}\left|t-s\right|^{\beta}\right]\wedge \left|\tau^{\prime}-s\right|^{\beta-\kappa-\zeta}\right).
\end{equation}
\end{enumerate}
\end{lemma}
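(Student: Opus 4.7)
The plan is to adapt the classical Feyel--de La Pradelle sewing construction to the two-parameter Volterra setting, keeping $\tau$ as a spectator throughout. For fixed $(s,t,\tau)\in\Delta_{3}$ and a partition $\mathcal{P}=\{s=t_{0}<\cdots<t_{n}=t\}$, I would define the Riemann-type sum
\[
S_{\mathcal{P}}(\varXi^{\tau})_{ts}:=\sum_{i=0}^{n-1}\varXi^{\tau}_{t_{i+1}t_{i}},
\]
and first establish a Cauchy property along the dyadic refinements $\mathcal{P}_{n}$ of $[s,t]$. Passing from $\mathcal{P}_{n}$ to $\mathcal{P}_{n+1}$ amounts to inserting midpoints, so via the operator $\delta$ from \eqref{2b} one has
\[
S_{\mathcal{P}_{n+1}}(\varXi^{\tau})_{ts}-S_{\mathcal{P}_{n}}(\varXi^{\tau})_{ts}=-\sum_{[u,v]\in\mathcal{P}_{n}}\delta_{m(u,v)}\varXi^{\tau}_{vu},
\]
with $m(u,v)=(u+v)/2$. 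Bounding each correction by the $(\beta,\kappa),1$ norm of $\delta\varXi$ and summing over the $2^{n}$ subintervals yields a geometric decay of order $2^{-n(\beta-1)}$ times $|\tau-t|^{-\kappa}|t-s|^{\beta}\wedge|\tau-s|^{\beta-\kappa}$; the assumption $\beta>1$ guarantees summability in $n$.

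This simultaneously produces existence of the limit $\mathcal{I}(\varXi^{\tau})_{ts}$ (independence from the approximating partitions following by a standard common-refinement argument) and, by telescoping the dyadic differences starting from the trivial partition $\mathcal{P}_{0}=\{s,t\}$, the estimate \eqref{sy lemma bound}. The additivity $\mathcal{I}(\varXi^{\tau})_{ts}=\mathcal{I}(\varXi^{\tau})_{t}-\mathcal{I}(\varXi^{\tau})_{s}$ is built into the definition of the Riemann sums. For the upper-variable bound \eqref{sy lemma upper arg bound}, I would apply the very same dyadic construction to the linear combination $\varXi^{\tau\tau^{\prime}}=\varXi^{\tau}-\varXi^{\tau^{\prime}}$, replacing the $(\beta,\kappa),1$ estimate by the $(\beta,\kappa),1,2$ one; the extra factors $|\tau-\tau^{\prime}|^{\eta}|\tau^{\prime}-v|^{-\eta+\zeta}$ propagate unchanged through the sum since they do not depend on the partition points inside $[s,t]$.

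Finally, to verify that $\mathcal{I}(\varXi^{\tau})\in\mathcal{V}^{(\alpha,\gamma)}(\Delta_{2};\mathbb{R}^{d})$, I would write $\mathcal{I}(\varXi^{\tau})_{ts}=\varXi^{\tau}_{ts}+\bigl(\mathcal{I}(\varXi^{\tau})_{ts}-\varXi^{\tau}_{ts}\bigr)$: the first summand lies in $\mathcal{V}^{(\alpha,\gamma)}$ by hypothesis, while the two bounds \eqref{sy lemma bound} and \eqref{sy lemma upper arg bound} place the remainder in the strictly more regular class $\mathcal{V}^{(\beta,\kappa)}$. Since $\beta-\kappa\geq\alpha-\gamma$, the embedding highlighted in Remark \ref{rem:holdeer embedding} (applied on the bounded interval $[0,T]$) transfers this control back to $\mathcal{V}^{(\alpha,\gamma)}$, yielding continuity of the linear map $\mathcal{I}$.

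I expect the principal obstacle to be a careful handling of the $\wedge$-structures appearing both in the $(\beta,\kappa),1$ and $(\beta,\kappa),1,2$ norms. The summation of refinement corrections needs to be split according to the relative sizes of $|\tau-t|$, $|t-s|$ and $|\tau-s|$: in the well-separated regime $|\tau-t|\gtrsim|t-s|$ one uses $|\tau-v|\sim|\tau-t|$ uniformly over the partition to reproduce the first branch of the minimum; in the near-singular regime one identifies a cut-off index after which $|\tau-t_{i+1}|$ becomes comparable to the mesh size and invokes the $|\tau-u|^{\beta-\kappa}$ bound for the tail subintervals to reproduce the second branch. Stitching these two regimes back together is what reconstitutes the $\wedge$ on the right-hand sides of \eqref{sy lemma bound} and \eqref{sy lemma upper arg bound}; the same dichotomy, carried out with the additional factors $|\tau-\tau^{\prime}|^{\eta}|\tau^{\prime}-v|^{-\eta+\zeta}$, produces the upper-variable bound.
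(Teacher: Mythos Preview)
Your proposal is essentially correct and follows the same dyadic-refinement strategy that the paper uses; note that the paper does not re-prove this lemma here but recalls it from \cite[Lemma~21]{HarangTindel}, and the method is exhibited in full in the proof of the two-singularity extension (Lemma~\ref{Volterra sewing lemma with two singularities}). The one place where your execution diverges slightly is in reconstructing the $\wedge$-structure: rather than your proposed case split on the size of $|\tau-t|$ versus $|t-s|$, the paper (in the proof of Lemma~\ref{Volterra sewing lemma with two singularities}) bounds the sum $\sum_{[u,w]\in\mathcal{P}^{n}}|\tau-w|^{-\kappa}|w-u|$ by the integral $\int_{s}^{t}|\tau-x|^{-\kappa}\,dx$, which is then estimated directly by both $|\tau-t|^{-\kappa}|t-s|$ and $|\tau-s|^{1-\kappa}$, recovering the minimum in one stroke; this is a little cleaner than tracking a cut-off index. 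Also, your claim that the factors $|\tau-\tau'|^{\eta}|\tau'-v|^{-\eta+\zeta}$ ``do not depend on the partition points'' is not literally true (the right endpoint of each subinterval enters), but since $\tau'>t\ge v$ the monotonicity $|\tau'-v|^{-\eta}\le|\tau'-t|^{-\eta}$ together with the same integral-comparison trick handles this; the argument is otherwise as you describe.
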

Lemma \ref{(Volterra sewing lemma)} is applied in \cite{HarangTindel} in order to get the construction of the path $(t,\tau)\mapsto z^{\tau}_t$ introduced in~\eqref{simple 2path}. We recall this result here, since $z$ is at the heart of our future considerations.

\begin{thm}\label{thm:Regularity of Volterra path} Let $x\in\mathcal{C}^{\alpha}$and $k$ be a Volterra kernel of order $-\gamma$ satisfying Hypothesis \ref{hyp a}, such that $\rho=\alpha-\gamma>0$. We define an element $\varXi_{ts}^{\tau}=k(\tau,s)x_{ts}$. Then the following holds true:
\begin{enumerate}[wide, labelwidth=!, labelindent=0pt, label=(\roman*)]
\item
There exists some coefficients $\beta > 1$ and $\kappa >0$ with $\beta-\kappa=\alpha-\gamma$ such that
$\varXi \in \mathcal{V}^{(\alpha,\gamma)(\beta,\kappa)}$, where $\mathcal{V}^{(\alpha,\gamma)(\beta,\kappa)}$  is given in Definition \ref{abstract integrnds space}. It follows that the element $\mathcal{I}\left(\varXi^{\tau}\right)$  obtained in  Lemma \ref{(Volterra sewing lemma)} is well defined as an element of $\mathcal{V}^{(\alpha,\gamma)}$ and we set $z_{ts}^{\tau}\equiv\mathcal{I}\left(\varXi^{\tau}\right)_{ts}=\int_{s}^{t}k(\tau,r)dx_{r}$. 

\item
For $(s,t,\tau)\in \Delta_{3}$ $z$ satisfies the bound  
\begin{equation*}
\left|z_{ts}^{\tau}-k(\tau,s)x_{ts}\right|\lesssim  \left[\left|\tau-t\right|^{-\gamma}\left|t-s\right|^{\alpha}\right]\wedge \left|\tau-s\right|^\rho,  
\end{equation*}
and in particular it holds that $\|z\|_{(\al,\ga),1}<\infty$.

\item
For any $\eta \in [0,1]$  and  any $(s,t,q,p)\in \Delta_{4}$  we have 
\begin{equation*}
\left|z_{ts}^{pq}\right|\lesssim \left|p-q\right|^{\eta}\left|q-t\right|^{-\eta+\zeta}\left(\left[\left|q-t\right|^{-\gamma-\zeta}\left|t-s\right|^{\alpha}\right]\wedge \left|q-s\right|^{\rho-\zeta}\right),
\end{equation*}
where $z_{ts}^{pq}=z^{p}_{t}-z^{q}_{t}-z_{s}^{p}+z_{s}^{q}$. In particular it holds that $\|z\|_{(\al,\ga),1,2}<\infty$.

\end{enumerate}
\end{thm}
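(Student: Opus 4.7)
The overall strategy is to reduce the statement to a single application of the Volterra sewing lemma (Lemma~\ref{(Volterra sewing lemma)}) with $\varXi_{ts}^{\tau}=k(\tau,s)x_{ts}$ as the abstract integrand. Concretely, the plan is to (a) verify $\varXi\in\mathcal{V}^{(\alpha,\gamma)}(\Delta_3;\mathbb{R}^d)$ in the sense of Remark \ref{rem: 3 vb function}, (b) identify admissible exponents $\beta>1$ and $\kappa>0$ with $\beta-\kappa=\alpha-\gamma$ such that $\delta\varXi\in\mathcal{V}^{(\beta,\kappa)}(\Delta_3;\mathbb{R}^d)$, so that $\varXi\in\mathcal{V}^{(\alpha,\gamma)(\beta,\kappa)}$, (c) set $z_{ts}^{\tau}=\mathcal{I}(\varXi^\tau)_{ts}$, and (d) read off statements (ii) and (iii) from the sewing bounds \eqref{sy lemma bound}--\eqref{sy lemma upper arg bound} together with the bounds on $\varXi$ already obtained in step (a).

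For step (a), the pointwise bound $|\varXi_{ts}^\tau|\le|k(\tau,s)||x_{ts}|\lesssim|\tau-s|^{-\gamma}|t-s|^{\alpha}$ combined with the elementary bound $|\tau-s|^{-\gamma}\lesssim|\tau-t|^{-\gamma}$ yields the $1$-norm bound; the alternative $|\tau-s|^{\alpha-\gamma}$ bound comes from $|t-s|^\alpha\le|\tau-s|^\alpha$. The $(1,2)$-norm is handled via $\varXi_{ts}^{\tau\tau'}=(k(\tau,s)-k(\tau',s))x_{ts}$, using the second line of Hypothesis~\ref{hyp a} to bound $|k(\tau,s)-k(\tau',s)|\lesssim|\tau'-s|^{-\gamma-\eta}|\tau-\tau'|^{\eta}$ and then distributing the $|\tau'-s|^{-\gamma-\eta}$ factor between $|\tau'-t|^{-\gamma-\zeta}$ and $|\tau'-s|^\zeta$ (or its $|\tau'-s|$ counterpart) by an elementary case analysis on the relative sizes of $|\tau'-t|$ and $|t-s|$.

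For step (b), the key algebraic identity is
\begin{equation*}
\delta_{u}\varXi_{ts}^{\tau}=\bigl(k(\tau,s)-k(\tau,u)\bigr)\,x_{tu},
\end{equation*}
so Hypothesis~\ref{hyp a} (the third inequality) gives $|\delta_{u}\varXi_{ts}^{\tau}|\lesssim|\tau-u|^{-\gamma-\eta}|u-s|^{\eta}|t-u|^{\alpha}$ for any $\eta\in[0,1]$. Choosing $\eta\in(1-\alpha,1]$ and using $|\tau-u|^{-\gamma-\eta}\le|\tau-t|^{-\gamma-\eta}$ together with $|u-s|^{\eta}|t-u|^{\alpha}\le|t-s|^{\alpha+\eta}$ yields the bound $|\tau-t|^{-\kappa}|t-s|^{\beta}$ with $\beta=\alpha+\eta>1$ and $\kappa=\gamma+\eta$, so that $\beta-\kappa=\alpha-\gamma$. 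The complementary bound $|\tau-s|^{\beta-\kappa}=|\tau-s|^{\alpha-\gamma}$ is obtained by splitting into Case $|t-s|\le|\tau-t|$ (where the previous estimate is already $\le|t-s|^{\alpha-\gamma}\le|\tau-s|^{\alpha-\gamma}$) and Case $|t-s|>|\tau-t|$, in which one uses $|k(\tau,s)-k(\tau,u)|\lesssim|\tau-u|^{-\gamma}$, subdivides further according to whether $|t-u|\lessgtr|\tau-t|$, and in each subcase extracts $|\tau-u|^{-\gamma}|t-u|^{\alpha}\lesssim(|\tau-u|\vee|t-u|)^{\alpha-\gamma}\le|t-s|^{\alpha-\gamma}\le|\tau-s|^{\alpha-\gamma}$. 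The $(1,2)$-norm of $\delta\varXi$ is handled analogously from the identity
\begin{equation*}
\delta_{u}\varXi_{ts}^{\tau\tau'}=\bigl(k(\tau,s)-k(\tau',s)-k(\tau,u)+k(\tau',u)\bigr)\,x_{tu},
\end{equation*}
invoking the two versions of the fourth inequality in Hypothesis~\ref{hyp a} and interpolating through the exponents $\eta,\zeta$ exactly as above.

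With $\varXi\in\mathcal{V}^{(\alpha,\gamma)(\beta,\kappa)}$ established, Lemma~\ref{(Volterra sewing lemma)} produces $z^\tau=\mathcal{I}(\varXi^\tau)\in\mathcal{V}^{(\alpha,\gamma)}$, and (ii) follows directly from \eqref{sy lemma bound} after noting that $|\tau-t|^{-\kappa}|t-s|^{\beta}\le T^{\eta}|\tau-t|^{-\gamma}|t-s|^{\alpha}$ in the regime $|t-s|\le|\tau-t|$ while $|\tau-s|^{\beta-\kappa}=|\tau-s|^{\alpha-\gamma}$ already matches. Statement (iii) is obtained from \eqref{sy lemma upper arg bound} combined with the explicit bound on $\varXi_{ts}^{pq}$ derived in step (a). I anticipate that the main technical hurdle is the case analysis required to convert each single-power bound coming from Hypothesis~\ref{hyp a} into the minimum form appearing in Definitions~\ref{Volterra space} and~\ref{abstract integrnds space}; in particular, verifying the $(1,2)$-bound for $\delta\varXi$ requires correctly splitting the $|\tau'-t|^{-\gamma-\eta}$ factors into the product $|\tau'-t|^{-\eta+\zeta}|\tau'-t|^{-\gamma-\zeta}$ and showing that both the $|\tau-\tau'|^\eta$ and $|t-s|^\beta$ (or $|\tau'-s|^{\beta-\kappa-\zeta}$) pieces are recovered uniformly in the relative geometry of the four points $s<u<t<\tau'<\tau$.
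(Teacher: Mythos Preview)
The paper does not actually prove this theorem here; it is recalled verbatim from~\cite{HarangTindel} (see the sentence immediately preceding the statement). Your outline is exactly the argument one expects and matches the construction in~\cite{HarangTindel}: compute $\delta_u\varXi_{ts}^{\tau}=(k(\tau,s)-k(\tau,u))x_{tu}$, invoke the kernel increment bounds of Hypothesis~\ref{hyp a} to place $\varXi$ in $\mathcal{V}^{(\alpha,\gamma)(\beta,\kappa)}$ with $\beta=\alpha+\eta$, $\kappa=\gamma+\eta$, and then read off (ii)--(iii) from the sewing estimates~\eqref{sy lemma bound}--\eqref{sy lemma upper arg bound}.

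One small point worth tightening: when you pick $\eta\in(1-\alpha,1]$ you should also impose $\eta<1-\gamma$ so that $\kappa=\gamma+\eta<1$, as required in the hypotheses of Lemma~\ref{(Volterra sewing lemma)}. Since $\alpha>\gamma$ the interval $(1-\alpha,1-\gamma)$ is nonempty, so this is harmless, but the upper constraint should be stated. Apart from that, the case analyses you describe for converting the single-power kernel bounds into the $\wedge$-form required by Definitions~\ref{Volterra space} and~\ref{abstract integrnds space} are the right mechanism, and the identity for $\delta_u\varXi_{ts}^{\tau\tau'}$ together with the last two lines of Hypothesis~\ref{hyp a} is precisely what handles the $(1,2)$-norm of $\delta\varXi$.
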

\begin{remark}
Thanks to Theorem \ref{thm:Regularity of Volterra path}, we know that a typical example of a Volterra path in $\mathcal{V}^{(\alpha,\gamma)}$ is given by the integral $\int_{s}^{t} k(\tau,r)dx_{r}$, as mentioned in Remark \ref{rek22}.
\end{remark}
\subsection{Convolution product in the rough case $\alpha-\gamma>\frac{1}{3}$}
A second crucial ingredient in the Volterra formalism put forward in \cite{HarangTindel} is the notion of  convolution product. In this section we show how this mechanism is introduced for first and second order convolutions, where we recall that second order convolutions were enough to handle the case $\rho=\alpha-\gamma>\frac{1}{3}$ in~\cite{HarangTindel}.\\
Let us first introduce a piece of notation which will prevail throughout the paper.
\begin{notation}\label{production notation}
In the sequel we will often consider products of the form $y_{s}z^{\tau}_{ts}$, where $y$ and $z^{\tau}$ are increments lying respectively in $\cc([0,T])$ and $\cc(\Delta_{2})$. For algebraic reasons due to our rough Volterra formalism, we will write this product as 
\begin{equation}\label{production 1}
\left[\left(z^{\tau}_{ts}\right)^{\intercal}\,y^{\intercal}_{s}\right]^{\intercal}
\end{equation}
For obvious notational reason, we will simply abbreviate \eqref{production 1}
into 
\[
z^{\tau}_{ts}\,y_{s}
\]
In the same way, products of 3 (or more) elements of the form $f^{\prime}(y_{s})y_{s}z^{\tau}_{ts}$ will be denoted as $z^{\tau}_{ts}y_{s}f^{\prime}(y_{s})$ without further notice.
\end{notation}
We now recall how the convolution with respect to $z^{\tau}$ is obtained, borrowing the following proposition from \cite[Theorem 25]{HarangTindel}.
\begin{prop}\label{one step conv}
We consider two Volterra paths
 $z\in\mathcal{V}^{\left(\alpha,\gamma\right)}(\mathbb{R}^{d})$ and $y\in\mathcal{V}^{(\alpha,\gamma)}(\mathcal{L}(\RR^{d}))$ as given in Definition \ref{Volterra space}, where we recall that $\al,\ga\in(0,1)$. Define $\rho=\al-\ga$, and assume $\rho>0$. Then the convolution product of the two Volterra paths $y$ and $z$ is a bilinear operation on $\mathcal{V}^{\left(\alpha,\gamma\right)}(\mathbb{ R}^{d})$ given by 
\begin{equation}\label{convolution in 1d}
\text{\ensuremath{z_{tu}^{\tau}\ast y_{us}^{\cdot}}}
=
\int_{t>r>u}dz_{r}^{\tau} y_{us}^{r} :=\lim_{\left|\mathcal{P}\right|\rightarrow0}\sum_{\left[u^{\prime},v^{\prime}\right]\in\mathcal{P}} z_{v^{\prime}u^{\prime}}^{\tau} y_{us}^{u^{\prime}}.
\end{equation}
 The integral in \eqref{convolution in 1d} is understood as a Volterra-Young integral for all $ (s,u,t,\tau)\in \Delta_{4}$. Moreover, the following two inequalities holds for any $\eta\in [0,1]$, $\zeta\in [0,2\rho)$ and $(s,u,t,\tau,\tau')\in \Delta_5$:
\begin{align}\label{eq:z conv y bound}
\left|z_{tu}^{\tau}\ast y_{us}^{\cdot}\right|&\lesssim\|z\|_{\left(\alpha,\gamma\right),1}\|y\|_{\left(\alpha,\gamma\right),1,2}\left(\left[\left|\tau-t\right|^{-\gamma}\left|t-s\right|^{2\rho+\gamma}\right]\wedge\left|\tau-s\right|^{2\rho}\right)
\\
\left|z_{tu}^{\tau'\tau}\ast y_{us}^{\cdot}\right|&\lesssim\|z\|_{\left(\alpha,\gamma\right),1,2}\|y\|_{\left(\alpha,\gamma\right),1,2}|\tau'-\tau|^\eta|\tau-t|^{-\eta+\zeta}\left(\left[\left|\tau-t\right|^{-\gamma}\left|t-s\right|^{2\rho+\gamma}\right]\wedge\left|\tau-s\right|^{2\rho-\zeta}\right) \label{eq:z conv y bound 2}
\end{align}
\end{prop}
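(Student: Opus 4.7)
The plan is to realize $z^\tau_{tu}\ast y^\cdot_{us}$ as the output of the Volterra sewing Lemma~\ref{(Volterra sewing lemma)} applied to the germ
\[
A^\tau_{v'u'}:=z^\tau_{v'u'}\,y^{u'}_{us},
\]
indexed by $(u',v',\tau)\in\Delta_3$ with $(s,u)$ treated as external parameters. The Riemann sum in~\eqref{convolution in 1d} is precisely $\sum_{[u',v']}A^\tau_{v'u'}$, so the sewing output will define the convolution and the sewing bounds will deliver both~\eqref{eq:z conv y bound} and~\eqref{eq:z conv y bound 2}.

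\textbf{Defect computation.} Since $z^\tau$ is additive in its lower variable, a direct expansion produces, for any intermediate point $u'<m<v'$,
\[
\delta_m A^\tau_{v'u'}=-\,z^\tau_{v'm}\,y^{mu'}_{us},
\]
where $y^{mu'}_{us}=y^m_{us}-y^{u'}_{us}$ is a double increment in the upper variable of $y$, precisely the object controlled by $\|y\|_{(\alpha,\gamma),1,2}$. Bounding $z^\tau_{v'm}$ via $\|z\|_{(\alpha,\gamma),1}$ and $y^{mu'}_{us}$ via~\eqref{V norm b} with parameters $\zeta=\eta\in(0,\alpha-\gamma)$ (so that the singular factor $|u'-u|^{-\eta+\zeta}$ in~\eqref{V norm b} collapses to $1$) and taking the second branch of the minimum in~\eqref{V norm b} yields
\[
|\delta_m A^\tau_{v'u'}|\lesssim \|z\|_{(\alpha,\gamma),1}\|y\|_{(\alpha,\gamma),1,2}\,|\tau-v'|^{-\gamma}|v'-u'|^{\alpha+\eta}|u'-s|^{\alpha-\gamma-\eta}.
\]
Treating the bounded factor $|u'-s|^{\alpha-\gamma-\eta}\le T^{\alpha-\gamma-\eta}$ as a constant, and combining with the analogous $|\tau-u'|^{\beta-\kappa}$ branch obtained from the second branch of $\|z\|_{(\alpha,\gamma),1}$, places $A$ in the space $\mathcal{V}^{(\alpha',\gamma')(\beta,\kappa)}$ of Definition~\ref{abstract integrnds space} with $\beta=\alpha+\eta>1$, $\kappa=\gamma$, and $\beta-\kappa\ge\alpha'-\gamma'$, ready for an application of the sewing lemma.

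\textbf{Conclusion.} Lemma~\ref{(Volterra sewing lemma)} now gives the existence of $\mathcal{I}(A^\tau)_{tu}:=\lim_{|\mathcal{P}|\to0}\sum A^\tau_{v'u'}$, identified with $z^\tau_{tu}\ast y^\cdot_{us}$, together with the remainder bound~\eqref{sy lemma bound}. Combining the remainder with the direct estimate
\[
|A^\tau_{tu}|\lesssim \|z\|_{(\alpha,\gamma),1}\|y\|_{(\alpha,\gamma),1,2}\bigl([|\tau-t|^{-\gamma}|t-s|^{2\rho+\gamma}]\wedge|\tau-s|^{2\rho}\bigr),
\]
which follows from the elementary inequalities $|t-u|^\alpha|u-s|^{\alpha-\gamma}\le|t-s|^{2\alpha-\gamma}$ and $|\tau-u|^{\alpha-\gamma}|u-s|^{\alpha-\gamma}\le|\tau-s|^{2(\alpha-\gamma)}$, delivers~\eqref{eq:z conv y bound}. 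The bound~\eqref{eq:z conv y bound 2} is proved identically with the germ $\tilde A^{\tau\tau'}_{v'u'}:=z^{\tau\tau'}_{v'u'}\,y^{u'}_{us}$, using $\|z\|_{(\alpha,\gamma),1,2}$ in place of $\|z\|_{(\alpha,\gamma),1}$; the extra weight $|\tau-\tau'|^\eta|\tau'-v'|^{-\eta+\zeta}$ then propagates through~\eqref{sy lemma upper arg bound} to produce the announced estimate.

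\textbf{Main obstacle.} The delicate point is verifying the hypothesis $\beta>1$ in the application of Lemma~\ref{(Volterra sewing lemma)}. One must push $\beta=\alpha+\eta$ strictly above $1$ while keeping $\eta<\alpha-\gamma$ so that the singular factor $|u'-u|^{-\eta+\zeta}$ in~\eqref{V norm b} is neutralized via the choice $\zeta=\eta$ together with the second branch of the minimum therein. This Young-type balance is the Volterra analogue of the classical $\alpha+\alpha>1$ rough-integral condition, and underpins the entire rough Volterra calculus in the regime $\rho=\alpha-\gamma>\tfrac13$ studied in~\cite{HarangTindel}.
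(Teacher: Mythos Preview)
The paper does not supply its own proof of this proposition: it is recalled verbatim from \cite[Theorem~25]{HarangTindel}, and Section~\ref{section 2} explicitly states that full proofs for all results therein are to be found in that reference. Your approach via the germ $A^\tau_{v'u'}=z^\tau_{v'u'}\,y^{u'}_{us}$, the defect identity $\delta_m A^\tau_{v'u'}=-z^\tau_{v'm}\,y^{mu'}_{us}$, and an appeal to the Volterra sewing Lemma~\ref{(Volterra sewing lemma)} is exactly the argument of \cite{HarangTindel}; your proposal is correct and matches the intended proof.

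One small remark on your ``main obstacle'' paragraph: the balance you isolate, $\alpha+\eta>1$ with $\eta<\rho$, is equivalent to $2\rho+\gamma>1$ (i.e.\ $\alpha+\rho>1$), not to $\rho>\tfrac13$. These are distinct conditions; the latter enters only when one passes to second-order convolutions (Theorem~\ref{two step conv}), where the analogous threshold becomes $3\rho+\gamma>1$. Under the bare hypothesis $\rho>0$ stated in the proposition, one must in general allow $\eta>\zeta$ and retain the singular factor $|u'-u|^{-\eta+\zeta}$; in \cite{HarangTindel} this is absorbed directly in the sewing estimate, and it is precisely this mechanism that the present paper later abstracts into the two-singularity sewing Lemma~\ref{Volterra sewing lemma with two singularities}.
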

In addition to Proposition \ref{one step conv}, the rough Volterra formalism relies on a stack of iterated integrals verifying convolutional type algebraic identities. Thanks to Proposition \ref{one step conv} we can now state the main assumption about this stack of integrals, which should be seen as the equivalent of Chen's relation in our Volterra context.
\begin{hyp}\label{hyp 2}
Let $z\in\mathcal{V}^{(\alpha,\gamma)}$ be a Volterra path as given in Definition \ref{Volterra space}. For n such that $(n+1)\rho+\gamma>1$, we assume that there exists a family $\{ \bfz^{j,\tau};j\leq n\}$ such that $\bfz^{j,\tau}_{ts}\in (\mathbb{R}^{m})^{\otimes j}$, $\bfz^{1}=z$ and verifying
\begin{equation}\label{hyp b}
\delta_{u}\bfz^{j,\tau}_{ts}=\sum_{i=1}^{j-1}\bfz^{j-i,\tau}_{tu}\ast \bfz^{i,\cdot}_{us}=\int_{s}^{t}d\bfz^{j-i,\tau}_{tr}\otimes \bfz^{i,r}_{us},
\end{equation}
where the right hand side of \eqref{hyp b} is defined in Proposition \ref{one step conv}. In addition, we suppose that for $j=1,\ldots,n$ we have ${\bf z}^{j}\in \mathcal{V}^{(j\rho+\gamma,\gamma)}$.
\end{hyp}
The last notation we need to recall from \cite{HarangTindel} is the concept of second order convolution product. To this aim, we first introduce some basic notation about increments.  
\begin{notation}\label{two variable function}
We will denote by $u^{1,2}$ a function $u: \Delta_{3}\to \mathcal{L}((\RR^{d})^{\otimes2},\RR^{d})$ with two upper indices, namely, 
\[
\Delta_{3}\ni \left(s,\tau_{1},\tau_{2}\right)\mapsto u^{\tau_{2},\tau_{1}}_{s}\in \RR^{d}.
\]
The notation $u^{1,2}$ highlights the order of integration in future computations. 
\end{notation}
We now specify the kind of topology we will consider for functions of the form $u^{1,2}$.
\begin{Def}\label{mod Volterra holder 2}
Let  $\mathcal{W}_{2}^{\left(\alpha,\gamma\right)}$ denote the space of functions $u:\Delta_3\rightarrow \mathcal{L}((\RR^{d})^{\otimes2},\RR^{d})$ with a fixed initial condition $u^{p,q}_{0}=u_{0}$, endowed with the norm
\begin{equation}\label{W_{2} norm}
\left\|u^{1,2}\right\|_{(\alpha,\gamma)}:=\left\|u^{1,2}\right\|_{(\alpha,\gamma),1}+\left\|u^{1,2}\right\|_{(\alpha,\gamma),1,2} .
\end{equation}
The right hand side of \eqref{W_{2} norm} is defined as follows, recalling the convention $\rho=\alpha-\gamma$:
\begin{align}\label{W_{2} 1-norm}
\left\|u^{1,2}\right\|_{\left(\alpha,\gamma\right),1}
:=
\sup_{\left(s,t,\tau\right)\in\Delta_3}\frac{\left|u_{ts}^{\tau,\tau}\right|}{\left[\left|\tau-t\right|^{-\gamma}\left|t-s\right|^{\alpha}\right]\wedge\left|\tau-s\right|^{\rho}},
\end{align}
and
\begin{equation}\label{W_{2} (1,2)-norm}
\left\|u^{1,2}\right\|_{\left(\alpha,\gamma\right),1,2}:=\left\|u^{1,2}\right\|_{\left(\alpha,\gamma\right),1,2,>} +\left\|u^{1,2}\right\|_{\left(\alpha,\gamma\right),1,2,<},
\end{equation}
where the norms $\|u^{1,2}\|_{\left(\alpha,\gamma\right),1,2,>}$ and $\|u^{1,2}\|_{\left(\alpha,\gamma\right),1,2,<}$ are respectively defined by
\begin{flalign}\label{> norm}
\left\|u^{1,2}\right\|_{(\alpha,\gamma),1,2,>} =\sup_{\substack{(s,t,r_1,r_2,r^\prime)\in \Delta_5\\ \eta\in [0,1],\zeta\in[0,\al-\ga)}} \frac{|u_{ts}^{r^\prime,r_2}-u_{ts}^{r^\prime,r_1}|}{h_{\eta,\zeta}(s,t,r_{1},r_{2},r^{\prime})},
\end{flalign}
\begin{flalign}\label{< norm}
\left\|u^{1,2}\right\|_{(\alpha,\gamma),1,2,<} =\sup_{\substack{(s,t,r^\prime,r_1,r_2)\in \Delta_5\\ \eta\in [0,1],\zeta\in[0,\al-\ga)}} \frac{|u_{ts}^{r_2,r^\prime}-u_{ts}^{r_1,r^\prime}|}{h_{\eta,\zeta}(s,t,r_{1},r_{2},r^{\prime})},
\end{flalign}
where the function $h$ is defined by 
\begin{flalign}\label{3h}
h_{\eta,\zeta}\left(s,t,r_{1},r_{2},r^{\prime}\right)&=\left|r_2-r_1\right|^{\eta}\left|\min(r_{1},r_{2},r^{\prime})-t\right|^{-\eta+\zeta}\notag
\\ &\times \left(\left[\left|\min(r_{1},r_{2},r^{\prime})-t\right|^{-\gamma-\zeta}\left|t-s\right|^\alpha\right]\wedge \left|\min(r_{1},r_{2},r^{\prime})-s\right|^{\alpha-\gamma-\zeta}\right) .
\end{flalign}
\end{Def}
\begin{remark}
In the sequel we will need to estimate differences of functions $u^{\cdot,\cdot}:\Delta_{3}\to\mathcal{L}((\RR^{m})^{\otimes2},\RR^{m})$ of the form $|u^{\tau,q}_{t}-u^{\tau,p}_{t}|$. Those differences can be handled thanks to Definition~\ref{mod Volterra holder 2} as follows:
\begin{flalign}\label{12norm}
\left|u^{\tau,q}_{t}-u^{\tau,p}_{t}\right|&\leq \left|u^{\tau,q}_{0}-u^{\tau,p}_{0}\right|+\left|u^{\tau,q}_{t0}-u^{\tau,p}_{t0}\right|\notag
\\&\leq \|u\|_{\left(\alpha,\gamma \right),1,2}\left|q-p\right|^{\eta}\left|p-t\right|^{-\eta+\zeta}\left(\left[\left|p-t\right|^{-\gamma-\zeta}\left|t\right|^{\alpha}\right]\wedge\left|p\right|^{\rho-\zeta}\right).
\end{flalign}
Since $\zeta \in [0,\rho)$ and $\eta \in [0,1]$, then we can set $\eta=\zeta$, that is 
\begin{equation*}
\left|u^{\tau,q}_{t}-u^{\tau,p}_{t}\right|\lesssim \|u\|_{\left(\alpha,\gamma\right),1,2}\left|q-p\right|^{\zeta}\lesssim \|u\|_{\left(\alpha,\gamma\right),1,2}.
\end{equation*}
we also have, for any $\tau\in[0,T]$,
\begin{eqnarray}\label{1norm}
\left|u^{\tau,\tau}_{t}-u^{\tau,\tau}_{0}\right|\leq \|u\|_{\left(\alpha,\gamma\right),1} 
\lc \left|\tau-t\right|^{-\gamma}\left|t\right|^{\alpha}\wedge\left|\tau\right|^{\rho}\rc
\lesssim \|u\|_{\left(\alpha,\gamma\right),1}.
\end{eqnarray}

\end{remark}
With the above definition at hand, we are now ready to recall the construction of second order convolution products in the rough case $\alpha-\gamma>\frac{1}{3}$.
\begin{thm}\label{two step conv}
Let $ z\in\mathcal{V}^{(\alpha,\gamma)}$ be as given in Definition \ref{Volterra space} with $\alpha,\gamma\in(0,1)$  satisfying $\rho=\alpha-\gamma>\frac{1}{3}$. We assume that $\bf z$  fulfills Hypothesis \ref{hyp 2} with $n=2$. Consider a function  $y:\Delta_3\rightarrow \mathcal{L}((\RR^{d})^{\otimes2},\RR^{d})$ with $\|y^{1,2}\|_{(\alpha,\gamma),1,2}<\infty$ and $y^{1,2}_{0}=y_{0}$, for a fixed initial condition $y_{0}\in \mathcal{L}((\RR^{d})^{\otimes2},\RR^{d})$.  For all fixed $(s,t,\tau)\in \Delta_{3}$ we have that 
\begin{equation}\label{224}
{\bf z}_{ts}^{2,\tau}\ast y^{1,2}_{s}:=\lim_{|\mathcal{P}|\rightarrow0}\sum_{\left[u,v\right]\in\mathcal{P}}{\bf z}_{vu}^{2,\tau} y^{u,u}_{s} +(\delta_{u}{\bf z}_{vs}^{2,\tau}) \ast y^{1,2}_{s}
\end{equation}
 is a well defined Volterra-Young integral. It follows that $\ast$  is a well defined bi-linear operation between the three parameters Volterra function ${\bf z}^{2}$  and a $3$-parameter path $y$.   Moreover, the following inequality holds
\begin{eqnarray}\label{eq:bound z2 conv y}
\left|{\bf z}_{ts}^{2,\tau}\ast y^{1,2}_{s}- {\bf z}_{ts}^{2,\tau } y^{s,s}_{s}\right|
\lesssim 
\|y^{1,2} \|_{\left(\alpha,\gamma\right),1,2}  
\left(\|{\bf z}^{2}\|_{\left(2\rho+\gamma,\gamma\right),1}+\|{\bf z}^{1}\|_{\left(\alpha,\gamma\right),1,2}\|{\bf z}^{1}\|_{\left(\alpha,\gamma\right),1}\right)\notag
\\
\times \left(\left[\left|\tau-t\right|^{-\gamma}\left|t-s\right|^{2\rho+\gamma}\right]\wedge\left|\tau-s\right|^{2\rho} \right).
\end{eqnarray}
\end{thm}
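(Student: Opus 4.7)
The natural approach is to interpret the sum in~\eqref{224} as a Riemann-type sum and invoke the Volterra sewing lemma (Lemma~\ref{(Volterra sewing lemma)}) to identify its limit. Fix $(s,\tau) \in \Delta_2$ and, for $s \le u < v \le \tau$, define the abstract Volterra integrand
\begin{equation*}
\varXi^\tau_{vu} := \mathbf{z}^{2,\tau}_{vu}\, y^{u,u}_s + \bigl(\delta_u \mathbf{z}^{2,\tau}_{vs}\bigr) \ast y^{1,2}_s,
\end{equation*}
where Hypothesis~\ref{hyp 2} lets us read the second summand as the iterated Volterra--Young integral $\int_u^v d\mathbf{z}^{1,\tau}_{r_2} \int_s^u d\mathbf{z}^{1,r_2}_{r_1}\, y^{r_2,r_1}_s$. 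The plan is to verify that $\varXi$ sits in the abstract space $\mathcal{V}^{(\alpha',\gamma')(\beta,\kappa)}$ of Definition~\ref{abstract integrnds space} with $\alpha' = 2\rho+\gamma$, $\gamma' = \gamma$, $\beta = 3\rho+\gamma$, $\kappa = \gamma$; these exponents are admissible since $\rho > 1/3$ gives $\beta > 1$ and $\beta - \kappa = 3\rho \ge 2\rho = \alpha' - \gamma'$. One then sets $\mathbf{z}^{2,\tau}_{ts} \ast y^{1,2}_s := \mathcal{I}(\varXi^\tau)_{ts}$, which by Lemma~\ref{(Volterra sewing lemma)}(i) coincides with the Riemann sum in~\eqref{224}; and since $\delta_s \mathbf{z}^{2,\tau}_{ts} = 0$ (recall $\mathbf{z}^{2,\tau}_{ss}=0$), evaluating at $u=s$ gives $\varXi^\tau_{ts} = \mathbf{z}^{2,\tau}_{ts}\, y^{s,s}_s$, so that the sewing bound~\eqref{sy lemma bound} translates directly (using $|t-s|^{3\rho+\gamma} \le T^\rho |t-s|^{2\rho+\gamma}$) into~\eqref{eq:bound z2 conv y}.

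\textbf{Regularity of $\varXi$.} That $\varXi \in \mathcal{V}^{(2\rho+\gamma,\gamma)}$ follows from a direct bound on each summand. The first summand is estimated by combining $\mathbf{z}^2 \in \mathcal{V}^{(2\rho+\gamma,\gamma)}$ (Hypothesis~\ref{hyp 2}) with the uniform bound on $y^{u,u}_s$ from~\eqref{1norm}. For the second summand, Proposition~\ref{one step conv} applied to the inner convolution $\mathbf{z}^{1,\tau}_{vu} \ast \mathbf{z}^{1,\cdot}_{us}$ delivers the bound~\eqref{eq:z conv y bound} of order $[|\tau-v|^{-\gamma}|v-u|^{2\rho+\gamma}] \wedge |\tau-u|^{2\rho}$, and pairing with the bounded integrand $y^{r_2, r_1}_s$ via~\eqref{12norm}--\eqref{1norm} only contributes a multiplicative constant. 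The analogous reasoning, exploiting the $(1,2)$-parts of the norms on $\mathbf{z}^2$, $\mathbf{z}^1$ and $y^{1,2}$, handles the $\|\varXi\|_{(2\rho+\gamma,\gamma),1,2}$ seminorm.

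\textbf{The key estimate on $\delta\varXi$.} The heart of the proof is to show $\delta\varXi \in \mathcal{V}^{(3\rho+\gamma,\gamma)}$. For $s<u<m<v$, expanding the three $\delta$'s together with the elementary identity
\begin{equation*}
\delta_u \mathbf{z}^{2,\tau}_{vs} - \delta_m \mathbf{z}^{2,\tau}_{vs} - \delta_u \mathbf{z}^{2,\tau}_{ms} = -\delta_m \mathbf{z}^{2,\tau}_{vu}
\end{equation*}
yields after cancellation
\begin{equation*}
\delta_m \varXi^\tau_{vu} = [\delta_m \mathbf{z}^{2,\tau}_{vu}]\, y^{u,u}_s - [\delta_m \mathbf{z}^{2,\tau}_{vu}] \ast y^{1,2}_s + \mathbf{z}^{2,\tau}_{vm}\bigl(y^{u,u}_s - y^{m,m}_s\bigr).
\end{equation*}
Writing $\delta_m \mathbf{z}^{2,\tau}_{vu} = \mathbf{z}^{1,\tau}_{vm} \ast \mathbf{z}^{1,\cdot}_{mu}$ via Hypothesis~\ref{hyp 2} and merging the first two terms, one rewrites them as the single iterated integral $\int_m^v d\mathbf{z}^{1,\tau}_{r_2} \int_u^m d\mathbf{z}^{1,r_2}_{r_1}\,(y^{u,u}_s - y^{r_2, r_1}_s)$. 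The integrand is now a $y$-increment, controlled by $\|y^{1,2}\|_{(\alpha,\gamma),1,2}\cdot |v-u|^{\rho}$ (up to an arbitrarily small loss in the exponent) by applying the $(1,2)$-norms~\eqref{> norm}--\eqref{< norm} successively to the two upper indices; Proposition~\ref{one step conv} then produces the overall order $[|\tau-v|^{-\gamma}|v-u|^{3\rho+\gamma}] \wedge |\tau-u|^{3\rho}$. The third term is estimated directly: Hypothesis~\ref{hyp 2} gives $|\mathbf{z}^{2,\tau}_{vm}| \lesssim [|\tau-v|^{-\gamma}|v-m|^{2\rho+\gamma}] \wedge |\tau-m|^{2\rho}$, whereas $|y^{u,u}_s - y^{m,m}_s| \lesssim \|y^{1,2}\|_{(\alpha,\gamma),1,2}\, |u-m|^{\rho}$ by the same diagonal splitting, and $|v-m|, |u-m| \le |v-u|$ brings everything into the same order. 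The analogous reasoning in the upper parameter $\tau$ produces the $(1,2)$-component of $\|\delta\varXi\|_{(3\rho+\gamma,\gamma)}$. The main obstacle, and the conceptual novelty, lies precisely in this algebraic rearrangement: the bare term $[\delta_m \mathbf{z}^{2,\tau}_{vu}]\, y^{u,u}_s$ would only be of order $|v-u|^{2\rho+\gamma}$ and hence below the sewing threshold, so the convolution correction $(\delta_u \mathbf{z}^{2,\tau}_{vs}) \ast y^{1,2}_s$ must be built into the definition of $\varXi$ in order to replace this product by a genuine $y$-increment, gaining the extra factor $|v-u|^\rho$ that unlocks sewing.
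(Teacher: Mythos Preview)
Your proposal is correct and follows essentially the same approach as the paper. The paper does not prove Theorem~\ref{two step conv} directly (it is recalled from~\cite{HarangTindel}), but your argument mirrors precisely the structure of the paper's proof of the third-order analogue, Theorem~\ref{thm 3 Volterra convolution}: define the germ $\varXi^\tau_{vu}=\bfz^{2,\tau}_{vu}y^{u,u}_s+(\delta_u\bfz^{2,\tau}_{vs})\ast y^{1,2}_s$, compute $\delta_m\varXi$, observe that the Chen-type relation~\eqref{hyp b} forces the dangerous $2\rho$-order products to combine into genuine $y$-increments of order $3\rho$, and then sew. One minor remark: your claim $|y^{u,u}_s-y^{m,m}_s|\lesssim |m-u|^\rho$ should carry the same ``arbitrarily small loss'' caveat you correctly attach to the first two terms (one only gets exponent $\zeta<\rho$ via~\eqref{12norm} with $\eta=\zeta$), but since $\rho>\tfrac13$ this still yields $2\rho+\gamma+\zeta>1$ for $\zeta$ close to $\rho$ and the ordinary Volterra sewing Lemma~\ref{(Volterra sewing lemma)} applies; the two-singularity Lemma~\ref{Volterra sewing lemma with two singularities} that the paper introduces is only needed at third order.
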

\begin{remark}\label{rek214}
By Hypothesis \ref{hyp 2}, the term $(\delta_{u}\bfz^{2,\tau}_{vs})\ast y^{1,2}_{s}$ in the right hand side of \eqref{224} can be rewritten as 
\[
\bfz^{1,\tau}_{vu}\ast \bfz^{1,\cdot}_{us}\ast y^{1,2}_{s},
\]
where the convolution with $\bfz^{1,\tau}$ is defined through~\eqref{convolution in 1d} and the inside integral concerns the second variable in $y^{1,2}$. As an example, if $k$, $x$ are smooth functions and $\bfz^{1,\tau}_{vs}=\int_{s}^{v}k(\tau,r)dx_{r}$, then this convolution is understood in the following way
\begin{equation*}
\bfz^{1,\tau}_{vu}\ast \bfz^{1,\cdot}_{us}\ast y^{1,2}_{s}
=\int_{u}^{v}k(\tau,r_{1})dx_{r_{1}} \otimes \int_{s}^{u}k(r_{1},r_{2})dx_{r_{2}} y^{r_{1},r_{2}}_{s}.
\end{equation*}

\end{remark}
\begin{remark}
Recalling that $\rho=\alpha-\gamma$, notice that Proposition \ref{one step conv} and Theorem \ref{two step conv} tell us how to define the $n$'th order convolution products under the condition $\rho>\frac{1}{3}$. We will follow a similar strategy to define third order convolution products and construct our solution to equation \eqref{Volterra integral equation} with $\rho>\frac{1}{4}$ in the subsequent section. 
\end{remark}
\section{Volterra rough paths for $\alpha-\gamma>\frac{1}{4}$}\label{Sec: 3 volterra rougher}
This section is devoted to the generalization of the concepts introduced in Section \ref{section 2} to accommodate the case of Volterra rough paths with regularity $\rho=\alpha-\gamma>\frac{1}{4}$. One of the main issues encountered in this direction is to define third order convolution structures. To this end, we will state a version of our Volterra sewing Lemma \ref{(Volterra sewing lemma)} extended to the case of  two types of Volterra singularities. 
\subsection{Volterra sewing lemma with two singularities} 
\label{Sec:Volterra sewing lemma with two singularities}With the aim of extending the Volterra sewing Lemma \ref{(Volterra sewing lemma)} with one singularity to an increment exhibiting two singularities,  we first introduce a new space of abstract integrands. 
\begin{Def}\label{new abstract integrnds space}
Let $\alpha\in\left(0,1\right)$ and $\gamma\in (0,1)$ with $\alpha-\gamma>0$. We also consider three coefficients $(\beta,\kappa,\theta)$, with $(\kappa+\theta)\in (0,1)$ and $\beta\in(1,\infty)$. Denote by $\mathcal{V}^{(\alpha,\gamma)(\beta,\kappa,\theta)}(\Delta_{4};\RR^{d})$, the space of all functions of the form $\Delta_{4}\ni(v,s,t,\tau) \mapsto (\varXi^{\tau}_{v})_{ts}\in \RR^{d}$
such that the following norm is finite:
\begin{equation}\label{301}
\left\|\varXi\right\|_{\mathcal{V}^{\left(\alpha,\gamma\right)\left(\beta,\kappa,\theta\right)}}=\left\|\varXi\right\|_{\left(\alpha,\gamma\right)}+\left\|\delta\varXi\right\|_{\left(\beta,\kappa,\theta\right)}.
\end{equation}
In equation \eqref{301}, the operator $\delta$  is introduced in \eqref{2b}, the quantity $\|\varXi\|_{\left(\alpha,\gamma\right)}$ is given by \eqref{Volterra norm} and the term $\|\delta\varXi\|_{\left(\beta,\kappa,\theta\right)}$ takes the double singularity into account. Namely we have
\[
\left\|\delta\varXi\right\|_{\left(\beta,\kappa,\theta\right)}=\left\|\delta\varXi\right\|_{\left(\beta,\kappa,\theta\right),1}+\left\|\delta\varXi\right\|_{\left(\beta,\kappa,\theta\right),1,2},
\]
where
\begin{flalign}\label{1 delta norm}
\left\|\delta\varXi\right\|_{\left(\beta,\kappa,\theta\right),1}:=\sup_{\left(v,s,m,t,\tau\right)\in\Delta_{5}}\frac{\left|\delta_{m}(\varXi^{\tau}_{v})_{ts}\right|}{\left[\left|\tau-t\right|^{-\kappa}\left|t-s\right|^{\beta}\left|s-v\right|^{-\theta}\right]\wedge \left|\tau-v\right|^{\beta-\kappa-\theta}},
\end{flalign}
and the term $\|\delta\varXi\|_{\left(\beta,\kappa,\theta\right),1,2}$ is defined by
\begin{flalign}\label{1,2 delta norm}
\|\delta\varXi\|_{\left(\beta,\kappa,\theta\right),1,2}:=\sup_{\substack{\left(v,s,m,t,\tau^\prime,\tau\right)\in\Delta_{6} \\ \eta\in[0,1],\zeta\in[0,\beta-\kappa-\theta)}}\frac{\left|\delta_{m}(\varXi^{\tau\tau^\prime}_{v})_{ts}\right|}{f(v,s,t,\tau^\prime,\tau)},
\end{flalign}
where the function $f$ is given by 
\begin{equation}\label{304}
f(v,s,t,\tau^\prime,\tau)=\left|\tau-\tau^{\prime}\right|^{\eta}\left|\tau^{\prime}-t\right|^{-\eta+\zeta}\left(\left[\left|\tau^{\prime}-t\right|^{-\kappa-\zeta}\left|t-s\right|^{\beta}\left|s-v\right|^{-\theta}\right]\wedge \left|\tau^{\prime}-v\right|^{\beta-\kappa-\theta-\zeta}\right).
\end{equation}
Notice that we will use $\mathcal{V}^{\left(\alpha,\gamma\right)\left(\beta,\kappa,\theta\right)}$
as a space of abstract Volterra integrands with a double singularity. 
\end{Def}
With this new space $\mathcal{V}^{\left(\alpha,\gamma\right)\left(\beta,\kappa,\theta\right)}$ at hand, we are ready to state the Volterra sewing Lemma with two singularities alluded to above. 
\begin{lemma}\label{Volterra sewing lemma with two singularities}
Consider five exponents $(\alpha,\gamma)$, and $(\beta,\kappa,\theta)$, with $\beta\in (1,\infty)$, $(\kappa+\theta)\in (0,1)$, $\alpha\in\left(0,1\right)$ and $\gamma\in(0,1)$ such that $\alpha-\gamma >0$.  Let $\mathcal{V}^{(\alpha,\gamma)(\beta,\kappa,\theta)}$  and $\mathcal{V}^{\left(\al,\gamma\right)}$ be the spaces given in Definition \ref{new abstract integrnds space} and  Definition \ref{Volterra space} respectively. Then there exists a linear continuous map $\mathcal{I}:\mathcal{V}^{(\alpha,\gamma)(\beta,\kappa,\theta)}\left(\Delta_{4};\mathbb{R}^{d}\right)\rightarrow\mathcal{V}^{\left(\alpha,\gamma\right)}\left(\Delta_{3};\mathbb{R}^{d}\right)$
such that the following holds true.
\begin{enumerate}[wide, labelwidth=!, labelindent=0pt, label=\emph{(\roman*)}]
\setlength\itemsep{.1in}

\item
The quantity $\mathcal{I}(\varXi^{\tau}_{v})_{ts}:=\lim_{|\mathcal{P}|\rightarrow 0} \sum_{[u,w]\in\mathcal{P}} (\varXi^{\tau}_{v})_{wu}$ exists for all $(v,s,t,\tau)\in \Delta_{4}$, where $\mathcal{P}$  is a generic partition of $[s,t]$  and $|\mathcal{P}|$  denotes the mesh size of the partition.  Furthermore, we define $\mathcal{I}(\varXi^{\tau}_{v})_{t}:=\mathcal{I}(\varXi^{\tau}_{v})_{t0}$, and have $\mathcal{I}(\varXi^{\tau}_{v})_{ts}=\mathcal{I}(\varXi^{\tau}_{v})_{t0}-\mathcal{I}(\varXi^{\tau}_{v})_{s0}$.

\item
For all $(v,s,t,\tau)\in \Delta_{4}$ we have 
\begin{align}\label{new sy lemma bound}
|\mathcal{I}\left(\varXi^{\tau}_{v}\right)_{ts}-(\varXi^{\tau}_{v})_{ts}|\lesssim & \|\delta\varXi\|_{(\beta,\kappa,\theta),1}\left(\left[\left|\tau-t\right|^{-\kappa}\left|t-s\right|^{\beta}\left|s-v\right|^{-\theta}\right]\wedge\left|\tau-v\right|^{\beta-\kappa-\theta}\right),
\end{align} 
while for $(v,s,t,\tau^{\prime},\tau)\in \Delta_{5}$ we get
\begin{equation}\label{new sy lemma upper arg bound}
\left|\mathcal{I}(\varXi^{\tau\tau^\prime}_{v})_{ts}-(\varXi^{\tau\tau^\prime}_{v})_{ts}\right|\lesssim  \|\delta\varXi\|_{\left(\beta,\kappa,\theta\right),1,2}f(v,s,t,\tau^{\prime},\tau),
\end{equation}
where $f$ is the function given by \eqref{304}.
\end{enumerate}
\end{lemma}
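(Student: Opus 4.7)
The plan is to adapt the proof of the one-singularity Volterra sewing lemma (Lemma \ref{(Volterra sewing lemma)}) by treating the variable $v$ as a fixed parameter sitting strictly below the integration domain $[s,t]$. The key structural observation is that for any sub-interval $[u,w]\subseteq[s,t]$ one has $u\geq s>v$, so $|u-v|^{-\theta}\leq|s-v|^{-\theta}$; similarly $|\tau-w|^{-\kappa}\leq|\tau-t|^{-\kappa}$. In other words, the two parametric singularities decouple from the partition refinement, which reduces the convergence question to the standard single-singularity sewing in the middle variables $(s,t,\tau)$.

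Concretely, first I would introduce a sequence of dyadic partitions $\mathcal{P}_n$ of $[s,t]$ and the Riemann-type approximations
\begin{equation*}
J_n(\varXi^\tau_v)_{ts}:=\sum_{[u,w]\in\mathcal{P}_n}(\varXi^\tau_v)_{wu}.
\end{equation*}
The telescoping identity $J_{n+1}-J_n=\sum_{[u,w]\in\mathcal{P}_n}\delta_m(\varXi^\tau_v)_{wu}$, where $m$ denotes the midpoint of $[u,w]$, combined with the first branch of the control \eqref{1 delta norm} and the comparisons $|u-v|^{-\theta}\leq|s-v|^{-\theta}$, $|\tau-w|^{-\kappa}\leq|\tau-t|^{-\kappa}$, yields
\begin{align*}
|J_{n+1}-J_n| &\lesssim \|\delta\varXi\|_{(\beta,\kappa,\theta),1}|\tau-t|^{-\kappa}|s-v|^{-\theta}\sum_{[u,w]\in\mathcal{P}_n}|w-u|^{\beta} \\
&\lesssim \|\delta\varXi\|_{(\beta,\kappa,\theta),1}|\tau-t|^{-\kappa}|s-v|^{-\theta}|t-s|^{\beta}2^{-n(\beta-1)}.
\end{align*}
Since $\beta>1$, this shows that $\{J_n\}$ is Cauchy and thus defines $\mathcal{I}(\varXi^\tau_v)_{ts}:=\lim_n J_n$; additivity $\mathcal{I}(\varXi^\tau_v)_{ts}=\mathcal{I}(\varXi^\tau_v)_{t0}-\mathcal{I}(\varXi^\tau_v)_{s0}$ then follows from invariance under refinement of the partitions.

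To obtain the sewing estimate \eqref{new sy lemma bound}, I would sum the dyadic bounds starting from the trivial partition $\{[s,t]\}$, which directly produces the first branch $[|\tau-t|^{-\kappa}|t-s|^{\beta}|s-v|^{-\theta}]$ of the min. The alternative branch $|\tau-v|^{\beta-\kappa-\theta}$ is recovered by performing the same dyadic argument with the second branch of \eqref{1 delta norm} in the regime where $|\tau-v|$ is the dominant length scale, so that a controlled grouping of sub-intervals keeps the accumulated sum bounded by $|\tau-v|^{\beta-\kappa-\theta}$. Combining the two bounds gives the min in \eqref{new sy lemma bound}.

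The proof of \eqref{new sy lemma upper arg bound} proceeds along the same lines, starting from the upper-variable increment $(\varXi^{\tau\tau'}_v)_{ts}$ and using \eqref{1,2 delta norm} in place of \eqref{1 delta norm}. Since the sewing is performed only in the $[s,t]$ direction, the prefactor $|\tau-\tau'|^{\eta}|\tau'-t|^{-\eta+\zeta}$ appearing in \eqref{304} passes through the sum as a spectator, yielding \eqref{new sy lemma upper arg bound}. The main obstacle I anticipate is the careful case analysis required to maintain the min structures in \eqref{new sy lemma bound} and \eqref{new sy lemma upper arg bound}: this amounts to partitioning the parameter space $\Delta_4$ (resp.\ $\Delta_5$) according to relative comparisons among the four (resp.\ five) relevant length scales $|t-s|,|\tau-t|,|s-v|$, and $|\tau-v|$ (resp.\ including $|\tau'-t|$ and $|\tau'-v|$), and in each regime selecting the appropriate branch of the $\delta$-control before assembling the telescoping dyadic sum.
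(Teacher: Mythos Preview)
Your overall dyadic-sewing strategy is the same as the paper's and is correct for establishing convergence and the first branch of the sewing bound. The gap is in how you recover the second branch $|\tau-v|^{\beta-\kappa-\theta}$ of the minimum in \eqref{new sy lemma bound}. Your monotonicity step $|u-v|^{-\theta}\le|s-v|^{-\theta}$, $|\tau-w|^{-\kappa}\le|\tau-t|^{-\kappa}$ gives only
\[
|J_{n+1}-J_n|\lesssim \|\delta\varXi\|_{(\beta,\kappa,\theta),1}\,|\tau-t|^{-\kappa}|s-v|^{-\theta}|t-s|^{\beta}\,2^{-n(\beta-1)},
\]
and this quantity is \emph{not} in general dominated by $|\tau-v|^{\beta-\kappa-\theta}$ (take $|\tau-t|$ or $|s-v|$ small with $|\tau-v|$ of order one). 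Your proposed fallback of ``using the second branch of \eqref{1 delta norm}'' per sub-interval fails outright: that branch gives $|\delta_m(\varXi^\tau_v)_{wu}|\le\|\delta\varXi\|_{(\beta,\kappa,\theta),1}\,|\tau-v|^{\beta-\kappa-\theta}$, a constant in $[u,w]$, and summing it over $2^n$ dyadic pieces diverges. A case analysis can be forced to work, but it amounts to re-deriving the integral estimate below by hand.

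The paper closes this gap with a single clean step that you are missing: after extracting one factor $|w-u|^{\beta-1}=2^{-n(\beta-1)}|t-s|^{\beta-1}$, interpret the remaining sum $\sum_{[u,w]\in\mathcal P_n}|\tau-w|^{-\kappa}|u-v|^{-\theta}|w-u|$ as a Riemann sum dominated by
\[
\int_s^t|\tau-x|^{-\kappa}|x-v|^{-\theta}\,dx
\;\lesssim\;
\big[|\tau-t|^{-\kappa}|s-v|^{-\theta}|t-s|\big]\wedge|\tau-v|^{1-\kappa-\theta},
\]
the second bound coming from extending the integration range to $[v,\tau]$ and scaling (a Beta-type integral, finite because $\kappa+\theta<1$). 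Multiplying back by $2^{-n(\beta-1)}|t-s|^{\beta-1}$ and using $|t-s|\le|\tau-v|$ yields both branches of \eqref{new sy lemma bound} simultaneously, without any case splitting. The same manoeuvre handles \eqref{new sy lemma upper arg bound} once the spectator factor $|\tau-\tau'|^\eta|\tau'-t|^{-\eta+\zeta}$ is carried along, exactly as you describe.
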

\begin{proof}
This is an extension of \cite[Lemma 21]{HarangTindel}. Let us consider the n-th order dyadic partition $\mathcal{P}^{n}$ of $[s,t]$ where each set $[u,w]\in \mathcal{P}^{n}$ has length $2^{-n}|t-s|$. We define the $n$-th order Riemann sum of $\varXi^{\tau}_{v}$, denoted $\mathcal{I}^{n}(\varXi^{\tau}_{v})_{ts}$, as follows
\[
\mathcal{I}^{n}(\varXi^{\tau}_{v})_{ts}=\sum_{[u,w]\in\mathcal{P}^{n}}\left(\varXi^{\tau}_{v}\right)_{wu}.
\]
Our aim is to show that the sequence $\{\mathcal{I}^{n}(\varXi^{\tau}_{v}); n\geq 1\}$ converges to an element $\mathcal{I}(\varXi^{\tau}_{v})$ which fulfills relation \eqref{new sy lemma bound}. To this aim we begin to consider the  difference $\mathcal{I}^{n+1}(\varXi^{\tau}_{v})-\mathcal{I}^{n}(\varXi^{\tau}_{v})$. A series of elementary computations reveals that 
\begin{equation}\label{relation n and n+1}
\mathcal{I}^{n+1}(\varXi^{\tau}_{v})_{ts}-\mathcal{I}^{n}(\varXi^{\tau}_{v})_{ts}=-\sum_{[u,w]\in\mathcal{P}^{n}}\delta_{m}(\varXi^{\tau}_{v})_{wu},
\end{equation}
where $m=\frac{w+u}{2}$ and where
we recall that $\delta$ is given by relation \eqref{2b}. Plugging relation \eqref{1 delta norm} into \eqref{relation n and n+1}, it is easy to check that 
\begin{equation}\label{308}
\sum_{[u,w]\in\mathcal{P}^{n}}|\delta_{m}(\varXi^{\tau}_{v})_{wu}|\lesssim\|\delta \varXi\|_{(\beta,\kappa,\theta),1}\sum_{[u,w]\in\mathcal{P}^{n}}|\tau-w|^{-\kappa}|u-v|^{-\theta}|w-u|^{\beta}.
\end{equation}
We will upper bound the right hand side above. Invoking the fact that $\beta>1$ and $|w-u|=2^{-n}|t-s|$, for $u,w \in \mathcal{P}^{n}$ we write 
\begin{equation}\label{Rie 309}
\sum_{[u,w]\in\mathcal{P}^{n}}|\tau-w|^{-\kappa}|u-v|^{-\theta}|w-u|^{\beta} \leq 2^{-n(\beta-1)}|t-s|^{\beta-1}\sum_{[u,w]\in\mathcal{P}^{n}}|\tau-w|^{-\kappa}|u-v|^{-\theta}|w-u|.
\end{equation}
With the definition of Riemann sums in mind, the term 
$$
\sum_{[u,w]\in\mathcal{P}^{n}}|\tau-w|^{-\kappa}|u-v|^{-\theta}|w-u|
$$ 
in the right hand side of \eqref{Rie 309} can be dominated by the following finite integral (recall that $\kappa+\theta<1$):
\[
\int_{s}^{t}|\tau-x|^{-\kappa}|x-v|^{-\theta}dx .
\]
In addition, some elementary calculations show that the above integral can be upper bounded as follows,
\begin{equation}\label{Rie 310}
\int_{s}^{t}|\tau-x|^{-\kappa}|x-v|^{-\theta}dx\lesssim|\tau-t|^{-\kappa}|s-v|^{-\theta}|t-s|\wedge|t-v|^{1-\kappa-\theta} .
\end{equation}
Plugging the inequality \eqref{Rie 310} into \eqref{Rie 309}, we thus get
\begin{flalign*}
\sum_{[u,w]\in\mathcal{P}^{n}}|\tau-w|^{-\kappa}&|u-v|^{-\theta}|w-u|^{\beta} \notag
\\
&\lesssim 2^{-n(\beta-1)}\left(\left[\left|\tau-t\right|^{-\kappa}\left|s-v\right|^{-\theta}\left|t-s\right|^{\beta}\right]\wedge\left|\tau-v\right|^{\beta-\kappa-\theta}\right).
\end{flalign*} 
Then taking \eqref{308} into account, relation \eqref{relation n and n+1}  can be recast as 
\begin{flalign}\label{Rie 312}
|\mathcal{I}^{n+1}(\varXi^{\tau}_{v})_{ts}&-\mathcal{I}^{n}(\varXi^{\tau}_{v})_{ts}|\notag
\\
&\lesssim 2^{-n(\beta-1)}\|\delta\varXi\|_{(\beta,\kappa,\theta),1}\left(\left[\left|\tau-t\right|^{-\kappa}\left|s-v\right|^{-\theta}\left|t-s\right|^{\beta}\right]\wedge\left|\tau-v\right|^{\beta-\kappa-\theta}\right).
\end{flalign}
Since $\beta>1$, then \eqref{Rie 312} implies that the sequence $\{\mathcal{I}^{n}(\varXi^{\tau}_{v}); n\geq 1\}$ is Cauchy. It thus converges to a quantity $\mathcal{I}(\varXi^{\tau}_{v})_{ts}$ which satisfies \eqref{new sy lemma bound}. The rest of this proof is the same as \cite[Lemma 4.2]{FriHai}, which means that the element $\mathcal{I}(\varXi^{\tau}_{v})$ has finite $\|\cdot\|_{(\beta,\kappa,\theta),1}$ norm. The proof of relation \eqref{new sy lemma upper arg bound} is very similar to \eqref{new sy lemma bound}, and left to the reader for sake of conciseness. We just define an increment $\varXi^{\tau,\tau^{\prime}}_{v}$ instead of $\varXi^{\tau}_{v}$ and then proceed as in \eqref{relation n and n+1}-\eqref{Rie 312}. The proof is now complete.
\end{proof}
\subsection{Third order convolution products in the rough case $\alpha-\gamma>\frac{1}{4}$ }
\label{Third order convolution products}

In this section we establish a proper definition of third order convolution products. Let us first introduce the class of integrands we shall consider for those products. 
\begin{notation}\label{3a}
Similarly to Notation \ref{two variable function}, we denote by $u^{1,2,3}$ a function $u:\Delta_{4}\rightarrow \mathcal{L}((\RR^{d})^{\otimes3},\RR^{d})$ given by 
\[
(s,\tau_{1},\tau_{2},\tau_{3})\mapsto u^{\tau_{3},\tau_{2},\tau_{1}}_{s} .
\]
\end{notation}
To motivate the upcoming analysis and in order to get a better intuition of what is meant by third order convolution products, let us first give a definition of the third order convolution product for smooth functions, and prove a useful relation for the construction of this convolution.
\begin{Def} \label{def third conv}
Let $x$ be a continuously differentiable function and consider a Volterra kernel $k$ which fulfills Hypothesis \ref{hyp a} with $\gamma<1$. Let also $f:\Delta_4\rightarrow \mathcal{L}((\RR^{m})^{\otimes3},\RR^{m})$ be a smooth function given in Notation \ref{3a}. Then recalling our Notation \ref{production notation} for $\tau\ge t>s\ge v$ the convolution  
$ {\bf z}^{3,\tau}_{ts}\ast f_{v}^{1,2,3}$ is defined by 
\begin{equation}\label{3 conv}
{\bf z}^{3,\tau}_{ts}\ast f_{v}^{1,2,3}=\int_{t>r_{1}>s}k(\tau,r_{1})dx_{r_{1}}\otimes \int_{r_{1}>r_{2}>s}k(r_{1},r_{2}) dx_{r_{2}}\otimes \int_{r_{2}>r_{3}>s}k(r_{2},r_{3}) dx_{r_{3}} f_{v}^{r_{1},r_{2},r_{3}}.
\end{equation}
\end{Def}
\begin{lemma} Under the same conditions as in Definition \ref{def third conv}, let ${\bf z}^{3,\tau}_{ts}\ast f_{s}^{1,2,3}$ be the increment given by \eqref{3 conv}. Consider $(s,t)\in \Delta_{2}$ and a generic partition $\mathcal{P}$ of $[s,t]$. Then we have  
\begin{equation}\label{eq:riemann-sums-z3-smooth}
{\bf z}^{3,\tau}_{ts}\ast f_{s}^{1,2,3}
=
\lim_{|\mathcal{P}|\to 0}\sum_{[u,v]\in \mathcal{P}} {\bf z}^{3,\tau}_{vu}\ast f_{s}^{1,2,3} 
+\left(\delta_{u}{\bf z}^{3,\tau}_{vs}\right)\ast f_{s}^{1,2,3}. 
\end{equation} 
\end{lemma}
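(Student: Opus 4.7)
The plan is to establish \eqref{eq:riemann-sums-z3-smooth} as an \emph{exact} equality for every finite partition $\mathcal{P}$ of $[s,t]$, which makes the stated limit trivial. The proof is purely algebraic and proceeds by a telescoping argument built on a Chen-type decomposition of the smooth iterated integral ${\bf z}^{3,\tau}$.

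First I would record the Chen-type identity
\begin{equation*}
{\bf z}^{3,\tau}_{vs} \;=\; {\bf z}^{3,\tau}_{vu} \,+\, {\bf z}^{3,\tau}_{us} \,+\, \delta_u {\bf z}^{3,\tau}_{vs},
\end{equation*}
valid for $s\le u\le v\le \tau$. In the smooth setting this is automatic from the definition of $\delta$ in \eqref{2b}, but its geometric content is useful to keep in mind: the $3$-simplex $\{v>r_1>r_2>r_3>s\}$ on which ${\bf z}^{3,\tau}_{vs}$ integrates splits into four disjoint pieces according to the position of each $r_i$ relative to $u$. Two of these pieces recover the full iterated integrals ${\bf z}^{3,\tau}_{vu}$ and ${\bf z}^{3,\tau}_{us}$, while the two mixed pieces produce, under Hypothesis \ref{hyp 2}, the convolutions ${\bf z}^{1,\tau}_{vu}\ast{\bf z}^{2,\cdot}_{us}$ and ${\bf z}^{2,\tau}_{vu}\ast{\bf z}^{1,\cdot}_{us}$ constituting $\delta_u {\bf z}^{3,\tau}_{vs}$.

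Next, I would apply the operation $\,\cdot\ast f^{1,2,3}_s$ to both sides of this identity. Because the convolution defined by \eqref{3 conv} is linear in its left argument (splitting the domain of integration additively over disjoint sub-simplices), one obtains after rearrangement
\begin{equation*}
{\bf z}^{3,\tau}_{vs}\ast f^{1,2,3}_s \,-\, {\bf z}^{3,\tau}_{us}\ast f^{1,2,3}_s \;=\; {\bf z}^{3,\tau}_{vu}\ast f^{1,2,3}_s \,+\, (\delta_u {\bf z}^{3,\tau}_{vs})\ast f^{1,2,3}_s .
\end{equation*}
Summing this identity over the intervals $[u,v]=[s_i,s_{i+1}]$ along $\mathcal{P}=\{s=s_0<\cdots<s_n=t\}$, the left-hand side telescopes, and the boundary term $\mathbf{z}^{3,\tau}_{ss}\ast f^{1,2,3}_s = 0$ disappears, leaving exactly ${\bf z}^{3,\tau}_{ts}\ast f^{1,2,3}_s$. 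This proves \eqref{eq:riemann-sums-z3-smooth} as a true equality for every $\mathcal{P}$; the limit as $|\mathcal{P}|\to 0$ is then automatic.

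The argument has essentially no analytic content in the smooth regime, and the only small obstacle is the one-line verification of linearity of $\ast$ under disjoint-domain decomposition, which is routine because all integrals in \eqref{3 conv} converge absolutely. The genuine interest of \eqref{eq:riemann-sums-z3-smooth} is therefore not in its proof here but in the abstract Riemann-sum shape it advertises: in the rough regime $\rho>\tfrac{1}{4}$ treated in the subsequent subsections, the same compensated Riemann sum will no longer be an identity, and its convergence will have to be extracted from the Volterra sewing Lemma \ref{Volterra sewing lemma with two singularities}, where the genuine technical work of the section lies.
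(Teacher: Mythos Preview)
Your proof is correct and arrives at the same exact (non-asymptotic) identity as the paper, via essentially the same mechanism: the additive decomposition of the simplex $\{v>r_1>r_2>r_3>s\}$ into the four sub-simplices determined by the position of each $r_i$ relative to $u$. The paper phrases this as a direct splitting of the outermost integral $\int_{t>r_1>s}=\sum_{[u,v]\in\mathcal{P}}\int_{v>r_1>u}$ followed by an explicit identification of the three resulting pieces $A^\tau_{vu},B^\tau_{vu},C^\tau_{vu}$; you phrase it as a Chen identity plus telescoping. These are two packagings of the same computation.

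One point of wording is worth sharpening. The operation $\,\cdot\ast f_s^{1,2,3}$ in \eqref{3 conv} is not a map on tensors (its meaning depends on the subscripts of $\bfz^{3,\tau}_{\cdot\cdot}$, which encode an integration domain), so ``apply $\ast f_s^{1,2,3}$ to both sides of the Chen identity by linearity'' is not literally a well-formed step. What makes your displayed identity true is precisely the domain decomposition you mention in parentheses, together with the fact that $(\delta_u\bfz^{3,\tau}_{vs})\ast f_s^{1,2,3}$ is \emph{defined} as $\bfz^{2,\tau}_{vu}\ast\bfz^{1,\cdot}_{us}\ast f_s^{1,2,3}+\bfz^{1,\tau}_{vu}\ast\bfz^{2,\cdot}_{us}\ast f_s^{1,2,3}$ (cf.\ the remark following Theorem~\ref{thm 3 Volterra convolution}). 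Once that is made explicit, your argument and the paper's coincide.
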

\begin{proof}
Starting from expression \eqref{3 conv}, it is readily seen that 
\begin{equation*}
{\bf z}^{3,\tau}_{ts}\ast f_{s}^{1,2,3}
=
\sum_{[u,v]\in \mathcal{P}}
\int_{v>r_{1}>u}k(\tau,r_{1})dx_{r_{1}}\otimes \int_{r_{1}>r_{2}>s}k(r_{1},r_{2})dx_{r_{2}}\otimes \int_{r_{2}>r_{3}>s}k(r_{2},r_{3})dx_{r_{3}} f_{s}^{{r_{1}},{r_{2}},{r_{3}}}.
\end{equation*}
Then for each $[u,v]\in \mathcal{P}$, divide the region $\{v>r_{1}>u \} \cap \{r_{1}>r_{2}>r_{3}>s \}$
into 
\begin{equation*}
 \{v>r_{1}>r_{2}>r_{3}>u\}\cup\{v>r_{1}>r_{2}>u>r_{3}>s\}\cup\{v>r_{1}>u>r_{2}>r_{3}>s\}. 
\end{equation*}
This yields a decomposition of 
${\bf z}^{3,\tau}_{ts}\ast f_{s}^{1,2,3}$  of the form 
\begin{equation*}\label{ABC eq}
{\bf z}^{2,\tau}_{ts}\ast f_{s}^{1,2,3}=\sum_{[u,v]\in \mathcal{P}} A^{\tau}_{vu}+B^{\tau}_{vu}+C^{\tau}_{vu},
\end{equation*}
where $A^{\tau}_{vu}$ ,  $B^{\tau}_{vu}$ , and $C^{\tau}_{vu}$ are respectively given by  
\begin{flalign*}
A_{vu}^{\tau}&=\int_{v>r_{1}>u}k(\tau,r_{1})dx_{r_{1}}\otimes \int_{r_{1}>r_{2}>u}k(r_{1},r_{2})dx_{r_{2}}\otimes \int_{r_{2}>r_{3}>u}k(r_{2},r_{3})dx_{r_{3}}  f_{s}^{{r_{1}},{r_{2}},{r_{3}}}\\
B_{vu}^{\tau}&=\int_{v>r_{1}>u}k(\tau,r_{1})dx_{r_{1}}\otimes \int_{r_{1}>r_{2}>u}k(r_{1},r_{2})dx_{r_{2}}\otimes \int_{u>r_{3}>s}k(r_{2},r_{3})dx_{r_{3}} f_{s}^{{r_{1}},{r_{2}},{r_{3}}}
\\
C_{vu}^{\tau}&=\int_{v>r_{1}>u}k(\tau,r_{1})dx_{r_{1}}\otimes \int_{u>r_{2}>s}k(r_{1},r_{2})dx_{r_{2}}\otimes \int_{r_{2}>r_{3}>s}k(r_{2},r_{3})dx_{r_{3}} f_{s}^{{r_{1}},{r_{2}},{r_{3}}}.
\end{flalign*}
We recognize the term $A_{vu}^{\tau}$ as the expression 
${\bf z}^{3,\tau}_{vu}\ast f_{s}^{1,2,3}$ given by Definition \ref{3 conv}. Moreover, we can check that $B_{vu}^{\tau}={\bf z}^{2,\tau}_{vu}\ast {\bf z}^{1,\cdot}_{us}\ast f_{s}^{1,2,3}$, and $C_{vu}^{\tau}={\bf z}^{1,\tau}_{vu}\ast {\bf z}^{2,\cdot}_{us}\ast f_{s}^{1,2,3}$. Then since $\bfz^{3,\tau}$ satisfies~\eqref{hyp b}, we have $B_{vu}^{\tau}+C_{vu}^{\tau}=\left(\delta_{u}{\bf z}^{3,\tau}_{vs}\right)\ast f_{s}^{1,2,3}$. This finishes the proof of our claim~\eqref{eq:riemann-sums-z3-smooth}.
\end{proof}
In order to generalize the notion of convolution product beyond the scope of Definition~\ref{def third conv} to accommodate rough signals $x$, let us introduce the kind of norm we shall consider for processes with 3 upper variables of the form $u^{1,2,3}$, and in that connection introduce  another Volterra-H\"older space equipped with this new norm. 
\begin{Def}\label{new Volterra holder}
Let $\mathcal{W}_{3}^{\left(\alpha,\gamma\right)}$  denote the space of functions $u:\Delta_4\rightarrow \mathcal{L}((\RR^{d})^{\otimes3},\RR^{d})$ as given in Notation~\ref{3a} with $u_0^{\tau_1,\tau_2,\tau_3}=u_0\in  \mathcal{L}((\RR^{d})^{\otimes3},\RR^{d})$ and such that $\|u^{1,2,3}\|_{(\alpha,\gamma)}<\infty$, where the norm $\|u^{1,2,3}\|_{(\alpha,\gamma)}$ is defined by 
\begin{equation}\label{W_{3} norm}
\left\|u^{1,2,3}\right\|_{(\alpha,\gamma)}:=\left\|u^{1,2,3}\right\|_{(\alpha,\gamma),1}+\left\|u^{1,2,3}\right\|_{(\alpha,\gamma),1,2,3}. 
\end{equation}
More specifically, the $\|\cdot\|_{(\alpha,\gamma),1}$ and $\|\cdot\|_{(\alpha,\gamma),1,2,3}$ norms in \eqref{W_{3} norm} are respectively defined by 
\begin{equation}\label{W_{3} 1-norm}
\left\|u^{1,2,3}\right\|_{(\alpha,\gamma),1}:=
\sup_{(s,t,\tau)\in\Delta_3}\frac{\left|u_{ts}^{\tau,\tau,\tau}\right|}{[|\tau-t|^{-\gamma}|t-s|^{\alpha}]\wedge|\tau-s|^{\rho}},
\end{equation}
and
\begin{equation}\label{W_{3} (1,2,3)-norm}
\left\|u^{1,2,3}\right\|_{(\alpha,\gamma),1,2,3}:=\left\|u^{1,2,3}\right\|_{(\alpha,\gamma),1,2} +\left\|u^{1,2,3}\right\|_{(\alpha,\gamma),1,3} +\left\|u^{1,2,3}\right\|_{(\alpha,\gamma),2,3}. 
\end{equation}
In the right hand side of \eqref{W_{3} (1,2,3)-norm}, similarly to \eqref{> norm}-\eqref{< norm}, we have set $\|u^{1,2,3}\|_{(\alpha,\gamma),1,2}$ as the sum $\|u^{1,2,3}\|_{(\alpha,\gamma),1,2,>} +\|u^{1,2,3}\|_{(\alpha,\gamma),1,2,<}$, with
\begin{flalign}\label{1,2,> norm}
\left\|u^{1,2,3}\right\|_{(\alpha,\gamma),1,2,>} =\sup_{\substack{(s,t,r,r_1,r_2,r^{\prime})\in \Delta_6\\ \eta\in [0,1],\zeta\in [0,\al-\ga)}} \frac{|u_{ts}^{r^\prime,r_2,r}-u_{ts}^{r^\prime,r_1,r}|}{h_{\eta,\zeta}(s,t,r_{1},r_{2},r,r^{\prime})},
\end{flalign}
\begin{flalign}\label{1,2,< norm}
\left\|u^{1,2,3}\right\|_{(\alpha,\gamma),1,2,<} =\sup_{\substack{(s,t,r,r^{\prime},r_1,r_2)\in \Delta_6\\ \eta\in [0,1],\zeta\in [0,\al-\ga)}} \frac{|u_{ts}^{r_2,r^\prime,r}-u_{ts}^{r_1,r^\prime,r}|}{h_{\eta,\zeta}(s,t,r_{1},r_{2},r,r^{\prime})}.
\end{flalign}
Here we define $h$ as follows:
\begin{multline}\label{3h extension}
h_{\eta,\zeta}\left(s,t,r_{1},r_{2},r,r^{\prime}\right)
=\left|r_2-r_1\right|^{\eta}
\left|\min(r_{1},r_{2},r,r^{\prime})-t\right|^{-\eta+\zeta}
\\ \times \left(\left[\left|\min(r_{1},r_{2},r,r^{\prime})-t\right|^{-\gamma-\zeta}
\left|t-s\right|^\alpha\right]\wedge \left|\min(r_{1},r_{2},r,r^{\prime})-s\right|^{\alpha-\gamma-\zeta}\right) .
\end{multline}
Moreover, the norms $\|u^{1,2,3}\|_{(\alpha,\gamma),2,3}$ and $\|u^{1,2,3}\|_{(\alpha,\gamma),1,3}$ in \eqref{W_{3} (1,2,3)-norm} are defined similarly to relations~\eqref{1,2,> norm}-\eqref{1,2,< norm}.
\end{Def}
\begin{remark}\label{remark 37}
Notice that Definition \ref{new Volterra holder} has been introduced so that the increments $y^{u,u,u}-y^{r,r,r}$ can be controlled by \eqref{W_{3} (1,2,3)-norm}. Indeed, we have for any $\eta\in [0,1]$ and $\zeta\in [0,\rho)$
\begin{flalign} \label{norm rek}
&\left|y^{u,u,u}_{ts}-y^{r,r,r}_{ts}\right|=\left|y^{u,u,u}_{ts}-y^{u,r,r}_{ts}+y^{u,r,r}_{ts}-y^{r,r,r}_{ts}\right|\leq \left|y^{u,u,u}_{ts}-y^{u,r,r}_{ts}\right|+\left|y^{u,r,r}_{ts}-y^{r,r,r}_{ts}\right|\notag
\\&\leq\left(\|y\|_{(\alpha,\gamma),2,3}+ \|y\|_{(\alpha,\gamma),1,2}\right)\left|u-r\right|^{\eta}\left|r-t\right|^{-\eta+\zeta}\left(\left[\left|r-t\right|^{-\gamma-\zeta}\left|t-s\right|^{\alpha}\right]\wedge \left|r-s\right|^{\rho-\zeta}\right)\notag
\\&\lesssim \|y\|_{(\alpha,\gamma),1,2,3}\left|u-r\right|^{\eta}\left|r-t\right|^{-\eta+\zeta}\left(\left[\left|r-t\right|^{-\gamma-\zeta}\left|t-s\right|^{\alpha}\right]\wedge \left|r-s\right|^{\rho-\zeta}\right) \notag
\\&\leq \|y\|_{(\alpha,\gamma),1,2,3}\left|u-r\right|^{\eta}\left|r-t\right|^{-\eta+\zeta}\left|r-s\right|^{\rho-\zeta}. 
\end{flalign}
Hence similarly to \eqref{1norm}, we let $\eta=\zeta$ and we obtain
\begin{equation}\label{123 norm}
|y^{u,u,u}_{ts}-y^{r,r,r}_{ts}|\lesssim  \|y\|_{(\alpha,\gamma),1,2,3}.
\end{equation}
\end{remark}
Thanks to Hypothesis \ref{hyp 2} and Definition \ref{new Volterra holder}, we can now state a general convolution product for functions defined on $\Delta_{4}$. As mentioned above, it has to be seen as a generalization of Definition \ref{def third conv} to a rough context.
\begin{thm} \label{thm 3 Volterra convolution}
Let $z\in\mathcal{V}^{(\alpha,\gamma)}$ with $\alpha,\gamma\in(0,1)$  satisfying $\rho=\alpha-\gamma>\frac{1}{4}$, as given in Definition \ref{Volterra space}. We assume that $\bf z$  fulfills Hypothesis \ref{hyp 2} with n=3.   Consider a function  $y:\Delta_4\rightarrow \mathcal{L}((\RR^{m})^{\otimes3},\RR^{m})$ as given in Notation \ref{3a} such that $\|y^{1,2,3}\|_{(\alpha,\gamma),1,2,3}<\infty$ and $y^{1,2,3}_{0}=y_{0}$, where $\|y^{1,2,3}\|_{(\alpha,\gamma),1,2,3}$ is defined by \eqref{W_{3} (1,2,3)-norm}. Then with Notation \ref{production notation} in mind, we have for all fixed $(s,t,\tau)\in\Delta_{3}$ that 
\begin{equation}\label{def of conv}
{\bfz}^{3,\tau}_{ts}\ast y_{s}^{1,2,3}
=
\lim_{|\mathcal{P}|\to 0}\sum_{[u,v]\in \mathcal{P}}{\bfz}^{3,\tau}_{vu} y_{s}^{u,u,u} 
+\left(\delta_{u}{\bfz}^{3,\tau}_{vs}\right)\ast y_{s}^{1,2,3}. 
\end{equation} 
is a well defined Volterra-Young integral. It follows that $\ast$  is a  well defined bi-linear operation between the three parameters Volterra function ${\bf z}^{3}$  and a $4$-parameter path $y$. Moreover, we have that 
\begin{flalign}\label{reg of conv product}
\left|{\bfz}_{ts}^{3,\tau}\ast y^{1,2,3}_{s}-{\bfz}_{ts}^{3,\tau } y^{s,s,s}_{s}\right|&\lesssim \|y^{1,2,3} \|_{(\alpha,\gamma),1,2,3}\big(\|\bfz^{3}\|_{(3\rho+\gamma,\gamma),1}+\|\bfz^{1}\|_{(\alpha,\gamma),1,2}\|{\bfz}^{2}\|_{(\alpha,\gamma),1}\notag
\\
&+\|{\bfz}^{2}\|_{(\alpha,\gamma),1,2}\|{\bfz}^{1}\|_{(\alpha,\gamma),1}\big)\left(\left[\left|\tau-t\right|^{-\gamma}\left|t-s\right|^{3\rho+\gamma}\right]\wedge \left|\tau-s\right|^{3\rho}\right).
\end{flalign}
\end{thm}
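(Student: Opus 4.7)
The plan is to realise $\bfz^{3,\tau}_{ts}\ast y^{1,2,3}_{s}$ as the output of the two-singularity Volterra sewing Lemma \ref{Volterra sewing lemma with two singularities} applied to an abstract integrand $\Xi$ on $\Delta_{4}$, so that both the convergence of the Riemann sums in \eqref{def of conv} and the estimate \eqref{reg of conv product} follow from the sewing lemma's consistency bound \eqref{new sy lemma bound}. The natural candidate is, for $(v,s,t,\tau)\in\Delta_{4}$,
\begin{equation*}
(\Xi^{\tau}_{v})_{ts} := \bfz^{3,\tau}_{ts}\, y^{s,s,s}_{v} + \bfz^{2,\tau}_{ts}\ast \bfz^{1,\cdot}_{sv}\ast y^{1,2,3}_{v} + \bfz^{1,\tau}_{ts}\ast \bfz^{2,\cdot}_{sv}\ast y^{1,2,3}_{v},
\end{equation*}
whose inner double convolutions make sense by Proposition \ref{one step conv} and Theorem \ref{two step conv}.

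Two identifications make this integrand the right one. First, for any subinterval $[u,w]\subset[s,t]$, the Chen-type relation of Hypothesis \ref{hyp 2} together with the additivity of $\bfz^{1,\cdot}$ in its lower variable yields
\[
(\Xi^{\tau}_{v})_{wu}= \bfz^{3,\tau}_{wu}\, y^{u,u,u}_{v} + (\delta_{u}\bfz^{3,\tau}_{wv})\ast y^{1,2,3}_{v},
\]
which at $v=s$ is exactly the Riemann summand on the right-hand side of \eqref{def of conv}. Second, $\bfz^{1,\cdot}_{ss}=\bfz^{2,\cdot}_{ss}=0$ forces $(\Xi^{\tau}_{s})_{ts}=\bfz^{3,\tau}_{ts}\, y^{s,s,s}_{s}$, so the sewing lemma's approximation error is precisely the quantity controlled by \eqref{reg of conv product}.

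The heart of the argument is then the computation of $\delta_{m}(\Xi^{\tau}_{v})_{ts}$ for $(v,s,m,t,\tau)\in\Delta_{5}$. Expanding each term of $\Xi$ via the Chen-type relations for $\bfz^{3}$ and $\bfz^{2}$ together with $\bfz^{1,\cdot}_{mv}=\bfz^{1,\cdot}_{ms}+\bfz^{1,\cdot}_{sv}$, and using associativity of $\ast$ to cancel the fourth-order convolution $\bfz^{1,\tau}_{tm}\ast\bfz^{1,\cdot}_{ms}\ast\bfz^{1,\cdot}_{sv}\ast y^{1,2,3}_{v}$ that appears in two places with opposite signs, one arrives at
\begin{align*}
\delta_{m}(\Xi^{\tau}_{v})_{ts} &= \bfz^{3,\tau}_{tm}\bigl(y^{s,s,s}_{v}-y^{m,m,m}_{v}\bigr) \\
&\quad + (\bfz^{2,\tau}_{tm}\ast \bfz^{1,\cdot}_{ms})\, y^{s,s,s}_{v} - \bfz^{2,\tau}_{tm}\ast \bfz^{1,\cdot}_{ms}\ast y^{1,2,3}_{v} \\
&\quad + (\bfz^{1,\tau}_{tm}\ast \bfz^{2,\cdot}_{ms})\, y^{s,s,s}_{v} - \bfz^{1,\tau}_{tm}\ast \bfz^{2,\cdot}_{ms}\ast y^{1,2,3}_{v}.
\end{align*}
Each of the last two pairs collapses into a single 3-fold iterated integral whose integrand is the pointwise upper-variable difference $y^{s,s,s}_{v}-y^{r_{1},r_{2},r_{3}}_{v}$ at the running variables, to be controlled via Definition \ref{new Volterra holder}, Remark \ref{remark 37} and \eqref{123 norm}. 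For the leading term I would couple the regularity $|\bfz^{3,\tau}_{tm}|\lesssim|\tau-t|^{-\gamma}|t-m|^{3\rho+\gamma}$ coming from Hypothesis \ref{hyp 2} with the upper-variable estimate \eqref{norm rek}, tuning $\eta,\zeta$ to extract an extra factor $|m-s|^{\eta}$; choosing $\eta\in(0,\rho)$ appropriately, the hypothesis $\rho>\tfrac14$ then forces $\beta:=3\rho+\gamma+\eta>1$. Combined with the parallel analysis of the upper-variable increment $\delta_{m}(\Xi^{\tau,\tau'}_{v})_{ts}$, this places $\Xi$ in the space $\mathcal{V}^{(\alpha,\gamma)(\beta,\gamma,0)}$ of Definition \ref{new abstract integrnds space}.

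Lemma \ref{Volterra sewing lemma with two singularities} will then deliver $\mathcal{I}(\Xi^{\tau}_{v})_{ts}$, identify it at $v=s$ with the Riemann-sum limit \eqref{def of conv}, and give \eqref{reg of conv product} directly from \eqref{new sy lemma bound} after absorbing $|t-s|^{\eta}\le T^{\eta}$ into the implicit constant. The main obstacle I anticipate is the algebraic bookkeeping inside $\delta_{m}\Xi$: one must carefully track six separate applications of Chen's relation and two distinct uses of the associativity of $\ast$ and verify that all non-remainder terms cancel exactly. Once this is done, the remaining analytical estimates amount to an iteration of the Young-type bounds from Proposition \ref{one step conv} and Theorem \ref{two step conv}, reorganised so that the effective exponent on $|t-s|$ stays strictly above $1$ and the sewing machinery becomes applicable.
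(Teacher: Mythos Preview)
Your proposal is correct and matches the paper's approach: the same integrand (your explicit expansion of $(\delta_{u}\bfz^{3,\tau}_{vs})\ast y^{1,2,3}_{s}$ via Chen's relation coincides with the paper's $(\Xi^{\tau}_{s})_{vu}$ in~\eqref{sum}), the same three-term decomposition of $\delta_{m}\Xi$ into what the paper calls $Q^{1},Q^{2},Q^{3}$, and the same appeal to Lemma~\ref{Volterra sewing lemma with two singularities}. The only difference is in the exponent bookkeeping: the paper takes $\zeta=0$ in~\eqref{norm rek}, which produces a genuine second singularity $|u-s|^{-\eta}$ and hence $\theta=\eta>0$ in the sewing lemma, whereas your choice $\zeta=\eta\in(0,\rho)$ suppresses that singularity and gives $\theta=0$; both choices are admissible since $\rho>\tfrac14$ guarantees $4\rho+\gamma>1$, so that the interval $(1-3\rho-\gamma,\rho)$ for $\eta$ is nonempty.
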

\begin{remark}
Similarly to Remark \ref{rek214}, the term $\left(\delta_{u}{\bf z}^{3,\tau}_{vs}\right)\ast y_{s}^{1,2,3}$ is defined thanks to the fact that (according to relation \eqref{hyp b})
\begin{equation}\label{a01}
\delta_{u}\bfz^{3,\tau}_{vs}\ast y^{1,2,3}_{s}=\bfz^{2,\tau}_{vu}\ast \bfz^{1,\cdot}_{us}\ast y^{1,2,3}+\bfz^{1,\tau}_{vu}\ast \bfz^{2,\cdot}_{us}\ast y^{1,2,3},
\end{equation}
and the convolutions with respect to $\bfz^{1,\tau}$, $\bfz^{2,\tau}$ in \eqref{a01} are respectively defined by Theorem~\ref{one step conv} and Theorem \ref{two step conv}.
\end{remark}
\begin{proof}[Proof of Theorem \ref{thm 3 Volterra convolution}]
We first prove \eqref{def of conv}. To this aim, for a generic partition $\cp$ of $[s,t]$ let us denote by $\mathcal{I}_{\mathcal{P}}$ the approximation of the right hand side of \eqref{def of conv}. Specifically we set $\mathcal{I}_{\mathcal{P}}:=\sum_{\left[u,v\right]\in\mathcal{P}} (\varXi^{\tau}_{s})_{vu}$, where
\begin{equation} \label{sum}
\left(\varXi_{s}^{\tau}\right)_{vu}=\bfz_{vu}^{3,\tau} y^{u,u,u}_{s}+\left(\delta_{u}\bfz_{vs}^{3,\tau}\right) \ast y_{s}^{1,2,3}.
\end{equation}
 We now compute $\delta_{r}(\varXi^{\tau}_{s})_{vu}$ in order to check that
the extended Volterra sewing Lemma \ref{Volterra sewing lemma with two singularities} can be applied in our context. Recall that 
 \[
 \delta_{r}\left(\varXi^{\tau}_{s}\right)_{vu}=\left(\varXi^{\tau}_{s}\right)_{vu}-\left(\varXi^{\tau}_{s}\right)_{vr}-\left(\varXi^{\tau}_{s}\right)_{ru}, \quad\text{for all}\quad  
 \tau>v>r>u>s.
 \]
 Moreover, we know from Hypothesis \ref{hyp 2} that  
 \[
 \delta_{r}\bfz^{3,\tau}_{vu}=\bfz^{2,\tau}_{vr}\ast \bfz^{1,\cdot}_{ru}+\bfz^{1,\tau}_{vr}\ast\bfz^{2,\cdot}_{ru}.
 \]
 Therefore, a few elementary computations reveal that 
 \begin{align}\label{a1}
\delta_{r}\left(\bfz_{vu}^{3,\tau} y^{u,u,u}_{s}\right)&=-\bfz_{vr}^{3,\tau}\left(y^{r,r,r}_{s}-y^{u,u,u}_{s}\right)+\left(\bfz_{vr}^{2,\tau}\ast \bfz_{ru}^{1,\cdot}+\bfz_{vr}^{1,\tau}\ast \bfz_{ru}^{2,\cdot}\right) y^{u,u,u}_{s} \\
\label{a11}
\delta_{r}\left(\left(\delta_{u}\bfz_{vs}^{3,\tau}\right) \ast y^{1,2,3}_{s}\right) &=-\left(\bfz_{vr}^{2,\tau}\ast \bfz_{ru}^{1,\cdot}+\bfz_{vr}^{1,\tau}\ast \bfz_{ru}^{2,\cdot}\right)\ast y^{1,2,3}_{s},
\end{align}
Combining \eqref{a1} and \eqref{a11}, we thus get
\begin{equation}\label{a}
\delta_{r}\left(\varXi^{\tau}_{s}\right)_{vu}=-\lp Q^{1}_{vru}+Q^{2}_{vru}+Q^{3}_{vru} \rp ,
\end{equation}
where the quantities $Q^{1}_{vru}$, $Q^{2}_{vru}$, $Q^{3}_{vru}$ are defined by
\begin{align*}
Q^{1}_{vru}&=\bfz_{vr}^{3,\tau}\left(y^{r,r,r}_{s}-y^{u,u,u}_{s}\right)
\\
Q^{2}_{vru}&=\bfz_{vr}^{2,\tau}\ast \bfz_{ru}^{1,\cdot}\ast \left(y^{1,2,3}_{s}-y^{u,u,u}_{s}\right)
\\
Q^{3}_{vru}&=\bfz_{vr}^{1,\tau}\ast \bfz_{ru}^{2,\cdot}\ast \left(y^{1,2,3}_{s}-y^{u,u,u}_{s}\right)
\end{align*}
We will bound each of the above  terms separately.

 Applying \eqref{norm rek} with $\zeta=0$, and invoking the definition of $\|\bfz^{3}\|_{(3\rho+\gamma,\gamma),1}$ in \eqref{V norm}, and using that $r\in [u,v]$ we have for any $\eta\in [0,1]$
\begin{equation}\label{c}
\left|Q^{1}_{vru}\right|\lesssim \left\|y^{1,2,3} \right\|_{(\alpha,\gamma),1,2,3} \left\|\bfz^{3}\right\|_{(3\rho+\gamma,\gamma),1}|u-s|^{-\eta}|\tau-v|^{-\gamma}|v-u|^{3\rho+\gamma+\eta}, 
\end{equation}
We then choose $\eta$  such that  $3\rho+\gamma+\eta>1$ and at the same time  $\eta+\gamma<1$, which is always possible since $\rho>0$, to obtain the desired regularity. 
 For the term $Q^{2}_{vru}$, we invoke the bound in \eqref{eq:bound z2 conv y}, and observe that
\begin{multline}\label{Q2 bound}
\left|Q^{2}_{vru}\right|\leq \left|\bfz^{2,\tau}_{vr}\right|\left|\bfz^{1,r}_{ru}\ast (y^{r,r,3}_{s}-y^{u,u,u}_{s})\right|
\\
+\left\|\hat{y}\right\|_{(\alpha,\gamma),1,2}\left(\|\bfz^1\|_{(\alpha,\gamma)}^2+\|\bz^2\|_{(2\rho+\gamma,\gamma)}\right)|\tau-v|^{-\gamma}|v-u|^{3\rho+\gamma}\wedge|\tau-u|^{3\rho}
\end{multline}
where $\hat{y}^{l,w}_{ru}=\bfz^{1,\cdot}_{ru}\ast (y^{l,w,3}_{s}-y^{u,u,u}_{s})
$, and we will need to find a bound for $\left\|\hat{y}\right\|_{(\alpha,\gamma),1,2}$. Note that convolution only happens in the first term of $y^{l,w,3}_{s}-y^{u,u,u}_{s}$.  
By \eqref{eq:z conv y bound 2} it follows that 
\begin{equation*}
    \|\hat{y}\|_{(\alpha,\gamma),1,2}\lesssim  \|\bz\|_{(\alpha,\gamma),1}\|y^{1,2,3}\|_{(\alpha,\gamma),2,3}|v-u|^{\eta}|u-s|^{-\eta}.
\end{equation*}
Furthermore, from \eqref{eq:z conv y bound} it is readily checked that 
\begin{equation*}
    \left|\bfz^{1,r}_{ru}\ast (y^{r,2,3}_{s}-y^{u,u,u}_{s})\right|\lesssim \|\bz^1\|_{(\alpha,\gamma),1}\|y^{r,2,3}-y^{u,u,u}_s\|_{(\alpha,\gamma),1,2}|r-u|^\rho
\end{equation*}
%
We continue to investigate the first terms in \eqref{Q2 bound}. From the above regularity estimate it follows that  
\begin{equation*}
\left|\bfz^{2,\tau}_{vr}\right|\left|\bfz^{1,\cdot}_{ru}\ast \left(y^{1,2,3}_{s}-y^{u,u,u}_{s}\right)\right|\lesssim  
\left\|\bfz^{2}\right\|_{(2\rho+\gamma,\gamma)}\left\|\bfz^{1}\right\|_{(\alpha,\gamma)}\left\|y\right\|_{(\alpha,\gamma)}|\tau-v|^{-\gamma}|v-u|^{3\rho+\gamma+\eta}|u-s|^{-\eta}.
\end{equation*}
Combining our estimates for the different terms on the right hand side of \eqref{Q2 bound}, we have that
\begin{flalign}\label{d}
\left|Q^{2}_{vru}\right|\lesssim \left\|y^{1,2,3} \right\|_{(\alpha,\gamma),1,2,3}\left\|\bfz^{2}\right\|_{(2\rho+\gamma,\gamma)} \left\|\bfz^{1}\right\|_{(\alpha,\gamma)}|\tau-v|^{-\gamma}|v-u|^{3\rho+\gamma+\eta}|u-s|^{-\eta},
\end{flalign} 
By similar computations as for the bound for $Q^2$,  we obtain a bound for $Q^{3}$ given by 
\begin{flalign}\label{e}
\left|Q^{3}_{vru}\right|\lesssim  \left\|y^{1,2,3} \right\|_{(\alpha,\gamma),1,2,3}\left\|\bfz^{1}\right\|_{(\alpha,\gamma)} \left\|\bfz^{2}\right\|_{(2\rho+\gamma,\gamma)}|\tau-v|^{-\gamma}|v-u|^{3\rho+\gamma+\eta}|u-s|^{-\eta}.
\end{flalign} 
Plugging \eqref{c}-\eqref{e} into \eqref{a}, we have thus obtained  
\begin{equation}\label{bound of deltaxi}
\left|\delta_{r}\left(\varXi^\tau_{s}\right)_{vu}\right|\lesssim C_{y,\bfz}|\tau-v|^{-\gamma}|u-s|^{-\eta}|v-u|^{3\rho+\gamma+\eta},  
\end{equation}
where the constant $C_{y,\bfz}$  used above  is given explicitly as 
\begin{equation*}
c_{y,\bfz}=\left\|y^{1,2,3}\right\|_{(\al,\ga),1,2,3}\left(\|\bfz^{3}\|_{(3\rho+\ga,\ga)}+2\left\|\bfz^{2}\right\|_{(2\rho+\gamma,\gamma)} \left\|\bfz^{1}\right\|_{(\alpha,\gamma)}\right).
\end{equation*}
Starting from \eqref{bound of deltaxi}, one can now check that 
\begin{equation}\label{333}
\left\|\delta\varXi\right\|_{(3\rho+\gamma+\eta,\gamma,\eta),1}<\infty,
\end{equation}
where the norm in the left hand side of \eqref{333} is defined by \eqref{1 delta norm}. In the same way, we let the patient reader check that $\|\varXi\|_{(3\rho+\gamma+\eta,\gamma,\eta),1,2}<\infty$, where the $\|\cdot\|_{(3\rho+\gamma+\eta,\gamma,\eta),1,2}$ norm is introduced in \eqref{1,2 delta norm}. Since we have chosen $\eta$ such that $3\rho+\gamma+\eta>1$ and $\gamma+\eta<1$, we can apply Lemma \ref{Volterra sewing lemma with two singularities} to the increment $\Xi$, which directly yields our claims \eqref{new sy lemma bound} and \eqref{new sy lemma upper arg bound}.
\end{proof}

\begin{remark} \label{remark 1}
The  general convolution $\bfz^{3,\tau}\ast y^{1,2,3}_{s}$ is given in \eqref{def of conv}, for a path $y$ defined on $\Delta_{4}$. If we wish to consider the convolution restricted to a path $y^{1,2}_{s}$ defined on $\Delta_{3}$, a natural way to proceed is to define
\begin{equation*}
\bfz^{3,\tau}_{ts}\ast y^{1,2}_{s}:=\bfz^{3,\tau}_{ts}\ast \hat{y}^{1,2,3},
\,\,\,\, \text{with} \,\,\,\, \hat{y}^{r_{1},r_{2},r_{3}}=y^{r_{2},r_{3}}.
\end{equation*}
This means that the path $\hat{y}$ has no dependence in $r_{1}$. Therefore resorting to the notations \eqref{W_{2} 1-norm}-\eqref{W_{2} (1,2)-norm}, and \eqref{W_{3} 1-norm}-\eqref{W_{3} (1,2,3)-norm}, it is not difficult to check that
\begin{flalign*}
\left\|\hat{y}^{1,2,3}\right\|_{(\alpha,\gamma),1,2,>}=\left\|y^{1,2}\right\|_{(\alpha,\gamma)1,2,<},&\qquad  \left\|\hat{y}^{1,2,3}\right\|_{(\alpha,\gamma),1,2,<}=0,\\
\left\|\hat{y}^{1,2,3}\right\|_{(\alpha,\gamma),1,3,>}=\left\|y^{1,2}\right\|_{(\alpha,\gamma)1,2,>}, &\qquad  \left\|\hat{y}^{1,2,3}\right\|_{(\alpha,\gamma),1,3,<}=0,\\
\left\|\hat{y}^{1,2,3}\right\|_{(\alpha,\gamma),2,3,>}=\left\|y^{1,2}\right\|_{(\alpha,\gamma)1,2,>}, &\qquad \left\|\hat{y}^{1,2,3}\right\|_{(\alpha,\gamma),2,3,<}=\left\|y^{1,2}\right\|_{(\alpha,\gamma)1,2,<}.
\end{flalign*}
Hence we have $\|\hat{y}^{1,2,3}\|_{(\alpha,\gamma),1,2,3}\lesssim \|y^{1,2}\|_{(\alpha,\gamma),1,2}$, where $\|\hat{y}^{1,2,3}\|_{(\alpha,\gamma),1,2,3}$ is given in \eqref{W_{3} (1,2,3)-norm} and the norm $\|y^{1,2}\|_{(\alpha,\gamma),1,2}$ is introduced in \eqref{W_{2} (1,2)-norm}. This will be invoked for our rough path constructions in the upcoming section.
\end{remark}
\begin{remark}\label{remark 2}
In our applications to rough Volterra equations we will consider the case $\rho=\alpha-\gamma \in(\frac{1}{4},\frac{1}{3})$. Therefore it is sufficient to show that the convolution product $\ast$ can be performed on the third level of a Volterra rough path. 
\end{remark}

\section{Stochastic calculus for Volterra rough paths}\label{sec: 4 stochastic calculus for volterra rough paths}

In this section we carry out some of the main steps leading to a proper differential calculus in a Volterra context. That is, we show how to integrate a Volterra controlled process in Section~\ref{Volterra controlled processes and rough Volterra integration}, and we solve Volterra type equations in Section~\ref{sec:volterra-eq}.

\subsection{Volterra controlled processes and rough Volterra integration}\label{Volterra controlled processes and rough Volterra integration}
We begin with a proper definition of rough Volterra integration in rough case $\alpha-\gamma>\frac{1}{4}$. As usual in rough integration theory, one needs to specify a proper class of processes which can be integrated with respect to the driving noise. As we will see, a non-geometric rough path type theory based on tree type expansions are needed, in order to construct a well defined rough Volterra integral. We therefore begin with some motivation for tree type expansions for iterated integrals. 

\subsubsection{Tree expansions setting}
In Hypothesis \ref{hyp 2}, we have introduced the notion of a convolutional rough path $\bfz$ above $z$. While $\bfz$ satisfies the Chen type relation \eqref{hyp b}, it cannot be considered as a geometric rough path (see e.g. \cite{FriHai}). The reader might check for instance that for a path $\bfz^{1,\tau}$ given by the mapping $(t,\tau)\mapsto \int_0^tk(\tau,r)dx_r$, the component $\bfz^{2,\tau}$ will {\em not} satisfy the component-wise relation 
\[
\left(\bfz^{2,\tau}_{ts}\right)^{ii}=\frac{1}{2} \lp (\bfz^{1,\tau}_{ts})^{i} \rp^{2}, \qquad i=1,\cdots,d.
\]
Hence in order to define a rough path type calculus of order 2 related to $z^{\tau}$, we have to invoke techniques related to non geometric settings. The standard language in this kind of context is related to the Hopf algebra of trees. We give a brief account on those notions in the current section, referring to \cite{Hairer.Kelly} for further details.\\

Let $\ct_{3}$ be the set of rooted trees with at most 3 vertices, whose vertices are decorated by labels from the alphabet $\{1,\ldots,d \}$. A full description of the undecorated version of $\ct_{3}$ is given by 
\begin{equation}\label{T3}
\ct_{3}=\lcl 
\TRone,\,\, \TRtwo,\,\, \TRthree,\,\, \TRott 
\rcl.
\end{equation}
In the sequel we will use the operation $[\cdot]$ on trees. Namely for $\sigma_{1},\ldots,\sigma_{m}\in \ct_{3}$ and $ a \in \{1,\ldots ,d \}$. we define $\sigma=[\sigma_{1} \cdots \sigma_{k}]_{a}$ as the tree for which $\sigma_{1},\ldots,\sigma_{k}$ are attached to a new root with label $a$. For instance in the unlabeled case we have 
\[
[\,1\,]=\TRone \, \qquad 
[\,\TRone\,]=\TRtwo \, \qquad
[\,\TRtwo\,]=\TRthree \, \qquad 
[\,\TRone \,\TRone\,]=\TRott.
\]
It is thus readily checked that any tree in $\ct_{3}$ can be constructed iteratively from smaller trees thanks to the operation $[\cdot]$. Let us also mention that we always assume 
that the order of the branches in each tree does not matter, in the sense that $[\sigma_{1}\cdots \sigma_{m}]_{i}=[\sigma_{\pi_{1}}\cdots \sigma_{\pi_{m}}]_{i}$ for all permutations $\pi$ of $\{1,\ldots, m\}$.

The set $\ct_{3}$ can be turned into a Hopf algebra when equipped with a suitable coproduct and antipode. This elegant structure is applied and discussed in detail  in \cite{Hairer.Kelly}, but is not necessary in our context. However, we shall use some of the notation contained in \cite{Hairer.Kelly} for our future computations.

\begin{notation}\label{F2}
For any tree $\sigma\in \ct_3$, the quantity $|\sigma|$ denotes the numbers of vertices in $\sigma$. We call the set $\mathcal{F}_{2}$  a forest consisting of elements with 2 vertices or less. Namely, $\mathcal{F}_{2}$ is defined by 
\[
\mathcal{F}_{2}=\lcl 
\TRone,\,\, \TRtwo,\,\, \TRone\,\TRone
\rcl.
\]
\end{notation}

\begin{remark}
Note that the operation $[\cdot]$ sends the set $\{1\}\cup \cf_2$ into $\ct_3$. 
\end{remark}

\subsubsection{Tree indexed rough path and controlled processes.}
We have already introduced the family $\{ \bz^{j,\tau}, j=1,2,3 \}$ in Hypothesis \ref{hyp 2}. These objects will be identified with tree indexed objects below. On top of this family, our computations will also hinge on an additional  function called $\bfz^{\TRott,\tau}$. Similarly to \eqref{3 conv}, whenever $x$ is a continuously differentiable function and $k$ satisfies Hypothesis \ref{hyp a}, the increment $\bfz^{\TRott,\tau}$ is defined by 
\begin{equation}\label{ott}
\bfz^{\TRott,\tau}_{ts}=\int_{s}^{t}k(\tau,r)\left(\int_{s}^{r}k(r,l)dx_{l}\right)
\otimes
\left(\int_{s}^{r}k(r,w)dx_{w}\right)
\otimes dx_{r} \, .
\end{equation}
However, for a generic rough signal $x$ we need some more abstract assumptions which are summarized below.

\begin{hyp}\label{hyp 3}
Let $z\in\mathcal{V}^{(\alpha,\gamma)}$ be a Volterra path as given in Definition \ref{Volterra space}. Recall that $\alpha,\gamma$ satisfies $4\rho+\gamma>1$ where $\rho=\alpha-\gamma$.  We assume the existence of a family $\mathbf{z}=\{\bfz^{\sigma,\tau}, \sigma \in \ct_{3}\}$ such that $\bfz^{\sigma,\tau}_{ts}\in (\mathbb{R}^{m})^{\otimes |\sigma|}$. This family is defined by 
\[
\bfz^{\TRone,\tau}=\bfz^{1,\tau},
\qquad 
\bfz^{\TRtwo,\tau} =\bfz^{2,\tau},
\qquad 
\bfz^{\TRthree,\tau} = \bfz^{3,\tau},
\]
where $\bfz^{1,\tau},\bfz^{2,\tau},\bfz^{3,\tau}$ are introduced in Hypothesis \ref{hyp 2}. Moreover the increment $\bfz^{\TRott,\tau}$ satisfies the algebraic relation 
\begin{equation}\label{hyp c}
\delta_{u}\bfz^{\TRott,\tau}_{ts}=2\bfz^{\TRtwo,\tau}_{tu}\ast \bfz^{\TRone,\cdot}_{us}+\bfz^{\TRone,\tau}_{tu}\ast\left( \bfz^{\TRone,\cdot}_{us}\right)^{\otimes 2}\, ,
\end{equation}
where the right hand side of \eqref{hyp c} is defined in Proposition \ref{one step conv}. Analytically, we require each $\bfz^{\sigma,\tau}$ to be an element of $\mathcal{V}^{(|\sigma|\rho+\gamma,\gamma)}$, and we define 
\begin{equation}\label{eq:norm like z}
    \vertiii{\bz}_{(\alpha,\gamma)} := \sum_{\sigma\in \ct^3} \|\bz^\sigma\|_{(|\sigma|\rho+\gamma,\gamma)}.
\end{equation}
\end{hyp}
\begin{remark}
Note that $\vertiii{\cdot}$ does not define a seminorm of any sort, but is rather meant as a convenient way to collect the seminorm terms concerning $\bz^\sigma$ for $\sigma\in \ct^3$.
\end{remark}
Together with the elements in Hypothesis \ref{hyp 3},  we will also make use  of the family $\{\bz^{\delta}; \delta \in \mathcal{F}_{2}\}$ in the sequel. To this aim, let us now introduce the element $\bz^{\TRone\,\TRone}$.
\begin{notation}
As stated in Hypothesis \ref{hyp 3}, we have set $\bfz^{\TRone,\tau}=\bfz^{1,\tau}$. In addition, we also define $\bfz^{\TRone\,\TRone,\tau}$ as
\begin{equation}\label{tree notation}
\bfz^{\TRone\,\TRone,\tau}_{ts}=\int_{s}^{t}k(\tau,r)dx_{r}\int_{s}^{t}k(\tau,l)dx_{l}=(\bfz^{\TRone,\tau}_{ts})^{\otimes2}\, .
\end{equation}
Therefore we can recast \eqref{hyp c} as 
\begin{equation}\label{hyp c2}
\delta_{u}\bfz^{\TRott,\tau}_{ts}=2\bfz^{\TRtwo,\tau}_{tu}\ast \bfz^{\TRone,\cdot}_{us}+\bfz^{\TRone,\tau}_{tu}\ast \bfz^{\TRone\,\TRone,\tau}_{ts}\, .
\end{equation}
\end{notation}
Assuming Hypothesis \ref{hyp 3} holds,  similarly to Theorem \ref{thm 3 Volterra convolution}, we now give a convolution result for $\bfz^{\TRott,\tau}$.
\begin{thm} \label{thm 3 Volterra convolution 12}
Let $z\in\mathcal{V}^{(\alpha,\gamma)}$ with $\alpha,\gamma\in(0,1)$  satisfying $\rho=\alpha-\gamma>\frac{1}{4}$. We assume that the Volterra rough path $\bfz$ over $z$ fulfills Hypothesis \ref{hyp 3}.   Consider a function  $y:\Delta_4\rightarrow \mathcal{L}((\RR^{m})^{\otimes3},\RR^{m})$ as given in Notation \ref{3a} such that $\|y^{1,2,3}\|_{(\alpha,\gamma),1,2,3}<\infty$ and $y^{1,2,3}_{0}=y_{0}$, where $\|y^{1,2,3}\|_{(\alpha,\gamma),1,2,3}$ is defined by \eqref{W_{3} (1,2,3)-norm}. Then with Notation \ref{production notation} in mind, we have for all fixed $(s,t,\tau)\in\Delta_{3}$ that 
\begin{equation}\label{def of conv 12}
{\bfz}^{\TRott,\tau}_{ts}\ast y_{s}^{1,2,3}
=
\lim_{|\mathcal{P}|\to 0}\sum_{[u,v]\in \mathcal{P}}{\bf z}^{\TRott,\tau}_{vu} y_{s}^{u,u,u} 
+\left(\delta_{u}{\bfz}^{\TRott,\tau}_{vs}\right)\ast y_{s}^{1,2,3}. 
\end{equation} 
is a well defined Volterra-Young integral. It follows that $\ast$  is a  well defined bi-linear operation between the three parameters Volterra function ${\bf z}^{3}$  and a $4$-parameter path $y$. Moreover, we have that 
\begin{flalign}\label{reg of conv product 12}
\left|{\bfz}_{ts}^{\TRott,\tau}\ast y^{1,2,3}_{s}-{\bfz}_{ts}^{\TRott,\tau } y^{s,s,s}_{s}\right|\lesssim \left\|y^{1,2,3} \right\|_{(\alpha,\gamma),1,2,3}\vertiii{\bz}_{(\alpha,\gamma)}^3\left(\left[\left|\tau-t\right|^{-\gamma}\left|t-s\right|^{3\rho+\gamma}\right]\wedge\left|\tau-s\right|^{3\rho}\right).
\end{flalign}
\end{thm}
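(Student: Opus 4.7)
The proof follows the blueprint of Theorem \ref{thm 3 Volterra convolution}, with the one substantive modification coming from the Chen-type identity \eqref{hyp c2} satisfied by $\bfz^{\TRott,\tau}$. I would start by introducing the local germ
\[
(\varXi^\tau_s)_{vu} := \bfz^{\TRott,\tau}_{vu}\,y^{u,u,u}_s + \bigl(\delta_u \bfz^{\TRott,\tau}_{vs}\bigr)\ast y^{1,2,3}_s,
\]
where the second term is well defined via \eqref{hyp c} combined with Proposition \ref{one step conv} and Theorem \ref{two step conv}, and then show that the Riemann-type sums $\mathcal{I}_\mathcal{P}(\varXi^\tau_s)_{ts} := \sum_{[u,v]\in\mathcal{P}}(\varXi^\tau_s)_{vu}$ converge as $|\mathcal{P}|\to 0$ by an application of the two-singularity sewing Lemma \ref{Volterra sewing lemma with two singularities}.

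The core algebraic step is the computation of $\delta_r(\varXi^\tau_s)_{vu}$ for $(s,u,r,v,\tau)\in\Delta_5$. As in the proof of Theorem \ref{thm 3 Volterra convolution}, the contributions coming from $\delta_u \bfz^{\TRott,\tau}_{vs}$ combine telescopically with $\delta_r \bfz^{\TRott,\tau}_{vu}$; invoking \eqref{hyp c} one is left with
\begin{align*}
\delta_r(\varXi^\tau_s)_{vu} = -\bfz^{\TRott,\tau}_{vr}\bigl(y^{r,r,r}_s - y^{u,u,u}_s\bigr) &- \bigl[2\bfz^{\TRtwo,\tau}_{vr}\ast \bfz^{\TRone,\cdot}_{ru}\bigr]\ast\bigl(y^{1,2,3}_s - y^{u,u,u}_s\bigr)\\
&- \bigl[\bfz^{\TRone,\tau}_{vr}\ast (\bfz^{\TRone,\cdot}_{ru})^{\otimes 2}\bigr]\ast\bigl(y^{1,2,3}_s - y^{u,u,u}_s\bigr).
\end{align*}
Call these three pieces $-Q^1_{vru},-Q^2_{vru},-Q^3_{vru}$. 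For $Q^1$, the regularity $\bfz^{\TRott}\in\cv^{(3\rho+\gamma,\gamma)}$ encoded in Hypothesis \ref{hyp 3}, combined with the estimate \eqref{123 norm} for $y^{r,r,r}_s - y^{u,u,u}_s$, yields a bound of the form $\vertiii{\bz}_{(\alpha,\gamma)}\|y^{1,2,3}\|_{(\alpha,\gamma),1,2,3}|\tau-v|^{-\gamma}|u-s|^{-\eta}|v-u|^{3\rho+\gamma+\eta}$ for an auxiliary parameter $\eta>0$, in direct analogy with \eqref{c}. For $Q^2$, I invoke Theorem \ref{two step conv} for the outer second-order convolution and Proposition \ref{one step conv} for the inner first-order one, in strict parallel with the estimate \eqref{d} from the proof of Theorem \ref{thm 3 Volterra convolution}. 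The genuinely new ingredient is $Q^3$, where the second-level object $\bfz^{\TRtwo,\cdot}_{ru}$ has been replaced by the tensor square $(\bfz^{\TRone,\cdot}_{ru})^{\otimes 2}$; here I would either treat the inner convolution iteratively through two applications of Proposition \ref{one step conv}, controlling the intermediate one-parameter process within $\cv^{(\alpha,\gamma)}$, or equivalently repackage $(\bfz^{\TRone,\cdot})^{\otimes 2}$ as an order-two Volterra-type object whose $(2\rho+\gamma,\gamma)$-seminorm is dominated by $\|\bfz^{\TRone}\|_{(\alpha,\gamma)}^2$. Either route produces a bound of the same form as for $Q^2$.

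Summing the three estimates gives
\[
|\delta_r(\varXi^\tau_s)_{vu}|\lesssim \vertiii{\bz}_{(\alpha,\gamma)}^3\,\|y^{1,2,3}\|_{(\alpha,\gamma),1,2,3}\,|\tau-v|^{-\gamma}|u-s|^{-\eta}|v-u|^{3\rho+\gamma+\eta},
\]
and a fully analogous computation in the upper variables delivers the matching bound for $\|\delta\varXi\|_{(3\rho+\gamma+\eta,\gamma,\eta),1,2}$ from \eqref{1,2 delta norm}. Since $4\rho+\gamma>1$ and $\gamma<1$, one can choose $\eta\in(0,1-\gamma)$ with $3\rho+\gamma+\eta>1$, which brings $\Xi$ into the scope of Lemma \ref{Volterra sewing lemma with two singularities}. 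That lemma then delivers both the existence of the limit \eqref{def of conv 12} and the quantitative estimate \eqref{reg of conv product 12}. The main obstacle I anticipate is the careful bookkeeping for the $Q^3$ term: justifying that $\bfz^{\TRone,\tau}_{vr}\ast(\bfz^{\TRone,\cdot}_{ru})^{\otimes 2}\ast(y^{1,2,3}_s - y^{u,u,u}_s)$ can be dismantled through the one- and two-step convolution machinery already developed, and that the composite three-upper-variable object so produced satisfies the $(1,2,3)$-regularity estimates of Definition \ref{new Volterra holder} strongly enough to absorb the increments of $y^{1,2,3}_s - y^{u,u,u}_s$ while still yielding the required $(3\rho+\gamma+\eta,\gamma,\eta)$ bound.
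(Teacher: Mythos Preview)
Your proposal is correct and takes precisely the approach the paper intends: the paper's own proof of Theorem~\ref{thm 3 Volterra convolution 12} reads in its entirety ``The proof goes along the same lines as the proof of Theorem~\ref{thm 3 Volterra convolution}, and is omitted for sake of conciseness.'' Your sketch fills in exactly those lines, correctly identifying that the only substantive change is the replacement of the Chen relation~\eqref{hyp b} by~\eqref{hyp c}, which swaps $\bfz^{\TRtwo,\cdot}_{ru}$ for $(\bfz^{\TRone,\cdot}_{ru})^{\otimes 2}$ in the third $Q$-term, and your proposed handling of that term via the existing one- and two-step convolution machinery is the natural route.
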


\begin{proof}
The proof goes along the same lines as the proof of Theorem \ref{thm 3 Volterra convolution}, and is omitted for sake of conciseness.
\end{proof}
We are now ready to introduce the natural class of processes one can integrate with respect to $\mathbf{z}$, called Volterra controlled processes.

\begin{Def}\label{controlled processes}
Let $z\in\mathcal{V}^{\left(\alpha,\gamma\right)}$ for some $\rho=\alpha-\gamma>0$, and consider a Volterra path $y:\Delta_2\rightarrow \RR^d$. We assume that there exists a family $\{ y^{\sigma}; \sigma \in \mathcal{F}_{2}\}$, with $\mathcal{F}_{2}$ as in Notation \ref{F2}, such that the following holds true.
\begin{enumerate}[wide, labelwidth=!, labelindent=0pt, label=\emph{(\roman*)}]
\item
$y^{\sigma}$ is a function from $\Delta_{|\sigma|+2}$ to $\mathcal{L}((\mathbb{R}^{d})^{\otimes |\sigma|},\mathbb{R}^{d})$, and $y^{\sigma}$ has $|\sigma|+1$ upper arguments. The initial conditions are respectively given by 
\[
y^{\TRone,p,q}_{0}=y^{\TRone}_{0}, \, \qquad 
y^{\TRtwo,p,q,r}_{0}=y^{\TRtwo}_{0}, \, \qquad
y^{\TRone\,\TRone,p,q,r}_{0}=y^{\TRone\,\TRone}_{0}, \, \qquad 
\text{for all} \,\quad (r,q,p)\in \Delta_{3}.
\]
\item
The family $\{ y^{\sigma}; \sigma \in\mathcal{F}_{2}\}$ is related to the increments of $y^{\tau}$  in the following way: for $(s,t,\tau)\in \Delta_{3}$ we have 
\begin{equation}\label{f}
y_{ts}^{\tau}=\bfz_{ts}^{\TRone,\tau}\ast y^{\TRone,\tau,\cdot}_{s}+\bfz_{ts}^{\TRtwo,\tau}\ast y^{\TRtwo,\tau,\cdot,\cdot}_{s}+\bfz_{ts}^{\TRone\,\TRone,\tau}\ast y^{\TRone\,\TRone,\tau,\cdot,\cdot}_{s}+R_{ts}^{y,\tau},
\end{equation}
and 
\begin{equation}\label{g}
y_{ts}^{\TRone,\tau,p}=\bfz_{ts}^{\TRone,\tau}\ast (y^{\TRtwo,\tau,p,\cdot}_{s}+2y^{\TRone\,\TRone,\tau,p,\cdot}_{s})+R_{ts}^{\TRone,\tau,p},
\end{equation}
where $y^{\TRtwo},y^{\TRone,\TRone}\in \cv^{(\alpha,\gamma)}$,   $R^{\TRone}\in\mathcal{W}_{2}^{\left(2\rho+2\gamma,2\gamma\right)}(\mathcal{L}(\RR^{d}))$ and $R^{y}\in\mathcal{V}^{\left(3\rho+3\gamma,3\gamma\right)}(\RR^{d})$ (recall that $\mathcal{V}^{\left(\alpha,\gamma\right)}$ and $\mathcal{W}_{2}^{\left(2\rho+2\gamma,2\gamma\right)}$ are  introduced in Definition \ref{Volterra space} and Definition \ref{mod Volterra holder 2}  respectively). 
\end{enumerate}
Whenever $\bfy\equiv(y,y^{\TRone},y^{\TRtwo},y^{\TRone\,\TRone})$ satisfies relation \eqref{f}-\eqref{g}, we say that $\bfy$ is a Volterra path controlled by $\mathbf{z}$ (or simply controlled Volterra path) and we write $\bfy\in\mathcal{D}_{\mathbf{z}}^{(\alpha,\gamma)}(\Delta_{2};\RR^{m})$. We equip this space with a semi-norm $\|\cdot \|_{\mathbf{z},(\alpha,\gamma)}$ given by 
\begin{eqnarray}\label{controlled norm}
\left\|\bfy\right\|_{\mathbf{z},(\alpha,\gamma)}
&=&
\left\|\left(y,y^{\TRone},y^{\TRtwo},y^{\TRone\,\TRone}\right)\right\|_{\mathbf{z},(\alpha,\gamma)} \notag \\
&=&
 \|y^{\TRtwo}\|_{(\alpha,\gamma)} +\|y^{\TRone\,\TRone}\|_{(\alpha,\gamma)}+\|R^{y}\|_{(3\rho+3\gamma,3\gamma)}+\|R^{\TRone}\|_{(2\rho+2\gamma,2\gamma)}\, . 
\end{eqnarray}
where the norms in \eqref{controlled norm} are respectively defined by \eqref{Volterra norm} and \eqref{W_{2} norm}.
Equipped with the norm
$$
\bfy = 
\lp y,y^{\TRone},y^{\TRtwo},y^{\TRone\,\TRone}\rp
\mapsto 
|y_{0}|+|y^{\TRone}_{0}|+|y^{\TRtwo}_{0}|+|y^{\TRone\,\TRone}_{0}|
+
\left\|\left(y,y^{\TRone},y^{\TRtwo},y^{\TRone\, \TRone}\right)\right\|_{\mathbf{z},(\alpha,\gamma)} ,
$$ 
the space $\mathcal{D}_{\mathbf{z}}^{\left(\alpha,\gamma\right)}$ is a Banach space. 
\end{Def}
\begin{remark}\label{space of y}
It is easily seen from \eqref{f} and \eqref{g} that if $\bfy\in \mathcal{D}_{\mathbf{z}}^{\left(\alpha,\gamma\right)}$, then  $y,y^{\TRone}\in \mathcal{V}^{(\alpha,\gamma)}$. Indeed, we observe directly from 
\eqref{g} that
\begin{equation*}
    \|y^{\TRone}\|_{(\alpha,\gamma)} \lesssim \|\bz^{\TRone}\|_{(\alpha,\gamma)}(|y^{\TRtwo}_0|+|y^{\TRone,\TRone}_0|+\|y^{\TRtwo}\|_{(\alpha,\gamma)}+\|y^{\TRone,\TRone}\|_{(\alpha,\gamma)}) +\|R^{\TRone}\|_{(2\rho+2\gamma,2\gamma)},
\end{equation*}
where the quantities on the right hand side are finite by assumption. Furthermore, by relation 
\eqref{f}  we then have that 
\begin{equation}\label{c6}
\|y\|_{(\alpha,\gamma)}\leq \|\bfz\|_{(\alpha,\gamma)}\left(|y^{\TRone}_{0}|+|y^{\TRtwo}_{0}|+|y^{\TRone,\TRone}_{0}|+\|y^{\TRone}\|_{(\alpha,\gamma)}+\|y^{\TRtwo}\|_{(\alpha,\gamma)}+\|y^{\TRone,\TRone}\|_{(\alpha,\gamma)}+\|R^{y}\|_{(3\rho+3\gamma,3\ga)}\right).
\end{equation}
\end{remark}
\begin{remark}\label{space of y2}
According to Definition \ref{controlled processes}, the function $y^{\TRone}$ is defined on $\Delta_{3}$ and has two upper variables, while $y^{\TRtwo}$ and $y^{\TRone\,\TRone}$ are defined on $\Delta_{4}$ and have three upper arguments. Therefore in \eqref{controlled norm} the norm $\|y^{\TRone}\|_{(\alpha,\gamma)}$ has to be understood as a norm in $\mathcal{W}_{2}^{(\alpha,\gamma)}$, while the norms $\|y^{\TRtwo}\|_{(\alpha,\gamma)}$ and $\|y^{\TRone\,\TRone}\|_{(\alpha,\gamma)}$ have to be considered as norms in $\mathcal{W}_{3}^{(\alpha,\gamma)}$. The readers is  referred to Definition \ref{mod Volterra holder 2} and \ref{new Volterra holder} for the definition of $\mathcal{W}_{2}^{(\alpha,\gamma)}$ and $\mathcal{W}_{3}^{(\alpha,\gamma)}$ respectively. We stick to the notation $\|\cdot\|_{(\alpha,\gamma)}$ for the norm on those different spaces, for notational ease.  
\end{remark}

\subsubsection{Integration of controlled processes}
Our next step is to show that we may construct a Volterra rough integral in the rough case $\alpha-\gamma>\frac{1}{4}$, and then prove that the Volterra rough integral of a controlled path with respect to a driving H\"older noise $x\in \mathcal{C}^{\alpha}$ is again a controlled Volterra path.
\begin{thm}\label{three step conv}
For $\alpha\in (0,1)$, let $x\in\mathcal{C}^{\alpha}([0,T];\RR^d)$ and $k$ be a Volterra kernel satisfying Hypothesis \ref{hyp a} with a parameter $\gamma$ such that $\rho=\alpha-\gamma>\frac{1}{4}.$ Define $z_{t}^{\tau}=\int_{0}^{t}k\left(\tau,r\right)dx_{r}$  and assume there exists a tree indexed rough path $\mathbf{z}=\{\bfz^{\sigma,\tau};\sigma\in \ct_{3}\}$ above $z$ satisfying Hypothesis \ref{hyp 3}.  Let $M>0$ be a constant  such that $ \vertiii{\mathbf{z}}_{(\alpha,\gamma)} \leq M $.
We now consider a controlled Volterra path $\mathbf{y}\in \mathcal{D}_{\mathbf{z}}^{(\alpha,\gamma)}(\mathcal{L}(\RR^{d}))$, as introduced in Definition~\ref{controlled processes}. Then the following holds true: 
\begin{enumerate}[wide, labelwidth=!, labelindent=0pt, label=\emph{(\roman*)}]
\setlength\itemsep{.1in}
 
\item \label{def of Xi}
 Define $\varXi_{vu}^\tau:=\bfz_{vu}^{\TRone,\tau}\ast y^{\cdot}_{u}+\bfz_{vu}^{\TRtwo,\tau}\ast y^{\TRone,\cdot,\cdot}_{u}+\bfz_{vu}^{\TRthree,\tau}\ast y^{\TRtwo,\cdot,\cdot,\cdot}_{u}+\bfz_{vu}^{\TRott,\tau}\ast y^{\TRone\,\TRone,\cdot,\cdot,\cdot}_{u}$. The following limit exists for all $(s,t,\tau)\in \Delta_3$, 
\begin{equation}\label{Volterra integral}
w_{ts}^{\tau}=\int_{s}^{t}k(\tau,r)dx_{r}y_{r}^{r}:=\lim_{|\mathcal{P}|\to 0}\sum_{[u,v]\in\mathcal{P}}\varXi_{vu}^\tau.
\end{equation}

\item\label{three step conv b}
Let $w$ be defined by \eqref{Volterra integral}. There exists a positive constant $C=C_{M,\alpha,\gamma}$ such that for all $(s,t,\tau)\in \Delta_3$  we have 
\begin{equation}\label{h}
\left|w_{ts}^\tau-\varXi_{ts}^\tau \right| 
\leq C\left\|\left(y,y^{\TRone},y^{\TRtwo},y^{\TRone\,\TRone}\right)\right\|_{\bf{z},(\alpha,\gamma)}\vertiii{\bfz}_{(\alpha,\gamma)}\left(\left[\left|\tau-t\right|^{-\gamma}\left|t-s\right|^{4\rho+\gamma}\right]\wedge \left|\tau-s\right|^{4\rho}\right).
\end{equation}

\item\label{three step conv c}
There exists a positive constant $C=C_{M,\alpha,\gamma}$ such that for all $(s,t,p,q)\in \Delta_4$, $\eta\in[0,1]$ and $\zeta\in[0,4\rho)$ we have 
\begin{multline}\label{i}
\left|w_{ts}^{qp}-\varXi_{vu}^{qp}\right| \leq C\left\|\left(y,y^{\TRone},y^{\TRtwo},y^{\TRone\,\TRone}\right)\right\|_{\mathbf{z},(\alpha,\gamma)}\vertiii{\bfz}_{(\alpha,\gamma)}
\\
\times \left|p-q\right|^{\eta}\left|q-t\right|^{-\eta+\zeta}\left(\left[\left|q-t\right|^{-\gamma-\zeta}\left|t-s\right|^{4\rho+\gamma} \right]\wedge \left|q-s\right|^{4\rho-\zeta}\right).
\end{multline}

\item\label{three step conv d}
The triple $\mathbf{w}=(w,w^{\TRone},w^{\TRtwo},0)$ is a controlled Volterra path in $\mathcal{D}^{(\alpha,\gamma)}_{\mathbf{z}}(\Delta_2,\RR^{m})$, where we recall that $w$ is defined by \eqref{Volterra integral},  and where $w^{\TRone}$, $w^{\TRtwo}$ are respectively given by 
\[
w_t^{\TRone,\tau,p}=y_{t}^{p}, \quad\text{and}\quad w_t^{\TRtwo,\tau,q,p}=y_{t}^{\TRone,q,p}.
\]
\end{enumerate}

\end{thm}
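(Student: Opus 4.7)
The plan is to apply the extended Volterra sewing Lemma \ref{Volterra sewing lemma with two singularities} to the increment $\varXi^{\tau}_{vu}$ defined in item \ref{def of Xi}, obtaining items (i)--(iii) simultaneously, and then to deduce item (iv) by reading off a controlled expansion for $w$ directly from its definition.

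\textbf{Step 1: Computing $\delta_{m}\varXi^{\tau}_{vu}$.} For $(u,m,v,\tau)\in\Delta_{4}$, the first task is to expand $\delta_{m}\varXi^{\tau}_{vu}=\varXi^{\tau}_{vu}-\varXi^{\tau}_{vm}-\varXi^{\tau}_{mu}$ term by term. Using the bilinearity of $\ast$ and splitting $\bfz^{\sigma,\tau}_{vu}=\bfz^{\sigma,\tau}_{vm}+\bfz^{\sigma,\tau}_{mu}+\delta_{m}\bfz^{\sigma,\tau}_{vu}$ for each $\sigma\in\ct_{3}$, the computation reduces to sums of convolutions of the form $\bfz^{\sigma,\tau}_{vm}\ast[\text{increment of }y^{\sigma',\cdot}_{\bullet}]$, together with explicit $\delta_{m}\bfz^{\sigma,\tau}_{vu}$ pieces that are handled by Hypothesis \ref{hyp 3} (relations \eqref{hyp b} and \eqref{hyp c}). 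After substituting the controlled expansions \eqref{f}--\eqref{g} for $y$ and $y^{\TRone}$, the leading order contributions cancel algebraically; the factor $2$ in front of $y^{\TRone\,\TRone}$ in \eqref{g} is exactly what is needed to match the factor $2$ appearing in \eqref{hyp c}. What survives are terms of the shape $\bfz^{\sigma,\tau}_{vm}\ast R^{y,\cdot}_{mu}$, $\bfz^{\sigma,\tau}_{vm}\ast R^{\TRone,\cdot}_{mu}$, and genuinely higher-order iterated convolutions, each carrying regularity $\geq 4\rho+\gamma+\eta$ in $|v-u|$ for some small $\eta>0$, together with a singularity in $|\tau-v|$ of order $\gamma$ and possibly in $|u-s|$ of order $\eta$.

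\textbf{Step 2: Invoking the sewing lemma.} Each surviving convolution piece is controlled by Proposition \ref{one step conv} and Theorems \ref{two step conv}, \ref{thm 3 Volterra convolution}, \ref{thm 3 Volterra convolution 12}. Collecting the bounds produces an estimate of the form
\[
\left|\delta_{m}\varXi^{\tau}_{vu}\right|\lesssim C_{\mathbf{y},\mathbf{z}}\,|\tau-v|^{-\gamma}|u-s|^{-\eta}|v-u|^{4\rho+\gamma+\eta},
\]
with $\eta\in(0,1)$ chosen so that $4\rho+\gamma+\eta>1$ (possible since $\rho>1/4$) and $\gamma+\eta<1$. One then verifies an analogous estimate for $\delta_{m}\varXi^{\tau\tau'}_{vu}$ in order to control the two-parameter variation in the upper argument. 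Thus $\varXi\in\mathcal{V}^{(\alpha,\gamma)(4\rho+\gamma+\eta,\gamma,\eta)}$ in the sense of Definition \ref{new abstract integrnds space}, and Lemma \ref{Volterra sewing lemma with two singularities} produces the limit \eqref{Volterra integral} together with the bounds \eqref{h} and \eqref{i}, settling items (i)--(iii).

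\textbf{Step 3: The controlled structure of $w$.} For item (iv), I would set $w^{\TRone,\tau,p}_{t}:=y^{p}_{t}$, $w^{\TRtwo,\tau,q,p}_{t}:=y^{\TRone,q,p}_{t}$, $w^{\TRone\,\TRone}\equiv 0$, and define
\[
R^{w,\tau}_{ts}:=\bfz^{\TRthree,\tau}_{ts}\ast y^{\TRtwo,\cdot,\cdot,\cdot}_{s}+\bfz^{\TRott,\tau}_{ts}\ast y^{\TRone\,\TRone,\cdot,\cdot,\cdot}_{s}+\bigl(w^{\tau}_{ts}-\varXi^{\tau}_{ts}\bigr),
\]
so that \eqref{f} for $w$ is built into the construction. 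By Theorems \ref{thm 3 Volterra convolution} and \ref{thm 3 Volterra convolution 12} the first two contributions belong to $\mathcal{V}^{(3\rho+\gamma,\gamma)}$, while by \eqref{h} the last one belongs to $\mathcal{V}^{(4\rho+\gamma,\gamma)}$. Using the embedding of Lemma \ref{relation of space}, $\mathcal{V}^{(3\rho+\gamma,\gamma)}\subset\mathcal{V}^{(3\rho+3\gamma,3\gamma)}$, one concludes $R^{w}\in\mathcal{V}^{(3\rho+3\gamma,3\gamma)}$. The analogous relation \eqref{g} for $w^{\TRone}$ reduces, after using \eqref{f} for $y^{p}$, to absorbing $(\bfz^{\TRone,p}_{ts}-\bfz^{\TRone,\tau}_{ts})\ast y^{\TRone,p,\cdot}_{s}$ into a remainder $R^{w,\TRone}$; this term is bounded by the $(1,2)$-norm of $\bfz^{\TRone}$ from Definition \ref{Volterra space} and lands in $\mathcal{W}_{2}^{(2\rho+2\gamma,2\gamma)}$.

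\textbf{Main obstacle.} The combinatorial heart of the argument is Step 1: tracking, for each tree $\sigma\in\ct_{3}$, the interaction between $\delta_{m}\bfz^{\sigma,\tau}$ coming from Hypothesis \ref{hyp 3} and the controlled expansions \eqref{f}--\eqref{g} for increments of the coefficients, and checking that every term of order strictly less than $4\rho+\gamma$ cancels. The tree-indexed formalism and the specific algebraic shape of Hypothesis \ref{hyp 3} (in particular the coefficients in \eqref{hyp c}) are tailored precisely so that these cancellations go through; once they are verified, the remaining analytic work is a routine though tedious application of the convolution estimates established in Sections \ref{section 2} and \ref{Sec: 3 volterra rougher}.
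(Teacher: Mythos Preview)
Your overall strategy matches the paper's: compute $\delta_{m}\varXi^{\tau}_{vu}$, use the controlled expansions \eqref{f}--\eqref{g} together with the algebraic relations \eqref{hyp b}--\eqref{hyp c} to produce cancellations, bound the surviving terms, apply a sewing lemma, and then read off the controlled structure of $w$. In particular your observation that the factor $2$ in \eqref{g} is exactly what matches the factor $2$ in \eqref{hyp c} is the correct algebraic point.

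There is one confusion worth flagging. In Step~2 you write
\[
|\delta_{m}\varXi^{\tau}_{vu}|\lesssim C_{\mathbf{y},\mathbf{z}}\,|\tau-v|^{-\gamma}|u-s|^{-\eta}|v-u|^{4\rho+\gamma+\eta}
\]
and then invoke the \emph{two}-singularity sewing Lemma~\ref{Volterra sewing lemma with two singularities}. But the germ $\varXi^{\tau}_{vu}$ defined in item~\ref{def of Xi} carries no dependence on any fixed lower point $s$; each summand involves $y^{\cdot}_{u}$, $y^{\TRone,\cdot,\cdot}_{u}$, etc., evaluated at $u$. Consequently there is no $|u-s|^{-\eta}$ singularity to speak of. You appear to be importing the structure from the proof of Theorem~\ref{thm 3 Volterra convolution}, where the germ $(\varXi^{\tau}_{s})_{vu}$ genuinely depends on a frozen $s$ through $y^{1,2,3}_{s}$. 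The paper's proof of the present theorem carries out the cancellation exactly as you describe, arriving at the four surviving terms
\[
\delta_{m}\varXi^{\tau}_{vu}=-\bfz_{vm}^{\TRthree,\tau}\ast y_{mu}^{\TRtwo,\cdot,\cdot,\cdot}-\bfz_{vm}^{\TRott,\tau}\ast y_{mu}^{\TRone\,\TRone,\cdot,\cdot,\cdot}-\bfz_{vm}^{\TRtwo,\tau}\ast R_{mu}^{\TRone,\cdot,\cdot}-\bfz_{vm}^{\TRone,\tau}\ast R_{mu}^{y,\cdot},
\]
and bounds this by $|\tau-v|^{-\gamma}|v-u|^{4\rho+\gamma}$ with a \emph{single} singularity, so that the ordinary Volterra sewing Lemma~\ref{(Volterra sewing lemma)} already applies. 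Your version is not wrong in the sense that Lemma~\ref{Volterra sewing lemma with two singularities} with $\theta=0$ collapses to Lemma~\ref{(Volterra sewing lemma)}, but the $|u-s|^{-\eta}$ factor and the extra $\eta$ in the exponent are spurious here and should be removed. Your Step~3 is fine and essentially coincides with what the paper does (cf.\ also Remark~\ref{remark for Rw and Rw dot}).
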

\begin{remark}\label{remark for Rw and Rw dot}
From Theorem \ref{three step conv}, we also can find a bound for $\|R^{w}\|_{(3\alpha,3\gamma)}$ and $\|R^{w^{\TRone}}\|_{(2\alpha,2\gamma)}$. \\
Specifically, according to Theorem \ref{three step conv} \ref{three step conv b} we have
\begin{equation}\label{Rw bound}
\left\|R^{w}\right\|_{(3\alpha,3\gamma)}\lesssim \left\|\left(y,y^{\TRone},y^{\TRtwo},y^{\TRone\,\TRone}\right)\right\|_{\bf{z},(\alpha,\gamma)}\vertiii{\bfz}_{(\alpha,\gamma)}
\end{equation}
Moreover, thanks to Theorem \ref{three step conv} \ref{three step conv d} we have $w_t^{\TRone,\tau,p}=y_{t}^{p}$.  Recalling \eqref{f} together with \eqref{Rw bound}, we obtain
\begin{equation}\label{Rw dot bound}
\left\|R^{w^{\TRone}}\right\|_{(2\alpha,2\gamma)}\lesssim 
\left\|\left(y,y^{\TRone},y^{\TRtwo},y^{\TRone\,\TRone}\right)\right\|_{\bf{z},(\alpha,\gamma)}\vertiii{\bfz}_{(\alpha,\gamma)}
\end{equation}
\end{remark} 
 \begin{proof}[Proof of Theorem \ref{three step conv}]Let $\varXi$ be given as in \ref{def of Xi}. 
Thanks to Proposition \ref{one step conv}, Theorem \ref{two step conv}, Theorem \ref{thm 3 Volterra convolution}, and Theorem \ref{thm 3 Volterra convolution 12}, $\varXi$ is well-defined.
Our global strategy is to show that the Volterra sewing lemma can be applied to $\varXi$. In order to do so, let us compute $\delta_{m}\varXi^{\tau}_{vu}$ for $(u,m,v,\tau)\in \Delta_{4}$. 
Owing to \eqref{hyp b}, as well as some elementary properties of the operator $\delta$, we get 
\begin{flalign}\label{j}
\delta_{m}\left(\varXi_{vu}^{\tau}\right):=A^{\tau}_{vmu}+B^{\tau}_{vmu}.
\end{flalign}
where the quantities $A^{\tau}_{vmu}$ and $B^{\tau}_{vmu}$ are given by 
\begin{equation}\label{ja}
A^{\tau}_{vmu}=-\left(\bfz_{vm}^{\TRone,\tau}\ast y^{\cdot}_{mu}+\bfz_{vm}^{\TRtwo,\tau}\ast y^{\TRone,\cdot,\cdot}_{mu}+\bfz_{vm}^{\TRthree,\tau}\ast y^{\TRtwo,\cdot,\cdot,\cdot}_{mu}+\bfz_{vm}^{\TRott,\tau}\ast y^{\TRone\,\TRone,\cdot,\cdot,\cdot}_{mu}\right)
\end{equation}
and
\begin{equation}\label{jb}
B^{\tau}_{vmu}=\delta_{m}\bfz_{vu}^{\TRtwo,\tau}\ast y^{\TRone,\cdot,\cdot}_{u}+\delta_{m}\bfz_{vu}^{\TRthree,\tau}\ast y^{\TRtwo,\cdot,\cdot,\cdot}_{u}+\delta_{m}\bfz_{vu}^{\TRott,\tau}\ast y^{\TRone\,\TRone,\cdot,\cdot,\cdot}_{u} .
\end{equation}
Due to the assumption that  $(y,y^{\TRone},y^{\TRtwo},y^{\TRone\TRone})\in \cd^{(\alpha,\gamma)}_{\bz}$, we have that for any $(s,t,\tau)\in \Delta_3$ 
\begin{equation*}
y_{ts}^{\cdot}=\bfz_{ts}^{\TRone,\cdot}\ast y^{\TRone,\cdot,\cdot}_{s}+\bfz_{ts}^{\TRtwo,\cdot}\ast y^{\TRtwo,\cdot,\cdot,\cdot}_{s}+\bfz_{ts}^{\TRone\, \TRone,\cdot}\ast y^{\TRone\,\TRone,\cdot,\cdot,\cdot}_{s}+R_{ts}^{y,\cdot},
\end{equation*}
and 
\begin{equation*}
y_{ts}^{\TRone,\cdot,\cdot}=\bfz_{ts}^{\TRone,\cdot}\ast \left(y^{\TRtwo,\cdot,\cdot,\cdot}_{s}+2y^{\TRone\,\TRone,\cdot,\cdot,\cdot}_{s}\right)+R_{ts}^{\TRone,\cdot,\cdot}.
\end{equation*}
Plugging the above two relations into \eqref{ja}, we obtain

\begin{align}
A^{\tau}_{vmu}=&-\bfz^{\TRone,\tau}_{vm}\ast \bfz^{\TRone,\cdot}_{mu}\ast y^{\TRone,\cdot,\cdot}_{u}-\bfz^{\TRone,\tau}_{vm}\ast \bfz^{\TRtwo,\cdot}_{mu}\ast y^{\TRtwo,\cdot,\cdot,\cdot}_{u}-\bfz^{\TRone,\tau}_{vm}\ast \bfz^{\TRone\,\TRone,\cdot}_{mu}\ast y^{\TRone\,\TRone,\cdot,\cdot,\cdot}_{u}-\bfz_{vm}^{\TRone,\tau}\ast R_{mu}^{y,\cdot} \notag
\\&-\bfz^{\TRtwo,\tau}_{vm}\ast \bfz^{\TRone,\cdot}_{mu}\ast y^{\TRtwo,\cdot,\cdot,\cdot}_{u}-2\bfz^{\TRtwo,\tau}_{vm}\ast \bfz^{\TRone,\cdot}_{mu}\ast y^{\TRone\,\TRone,\cdot,\cdot,\cdot}_{u}-\bfz_{vm}^{\TRtwo,\tau}\ast R_{mu}^{\TRone,\cdot,\cdot}\label{eq1}
\\&-\bfz_{vm}^{\TRthree,\tau}\ast y_{mu}^{\TRtwo,\cdot,\cdot,\cdot}-\bfz_{vm}^{\TRott,\tau}\ast y_{mu}^{\TRone\,\TRone,\cdot,\cdot,\cdot}\notag.
\end{align}
Thanks to Hypothesis \ref{hyp 2} and Hypothesis \ref{hyp 3}, plugging in the algebraic relations from \eqref{hyp b} and \eqref{hyp c} into \eqref{jb}, we have
\begin{flalign}\label{eq2}
B^{\tau}_{vum}=&\bfz^{\TRone,\tau}_{vm}\ast\bfz^{\TRone,\cdot}_{mu}\ast y^{\TRone,\cdot,\cdot}_{u}+\bfz^{\TRone,\tau}_{vm}\ast\bfz^{\TRtwo,\cdot}_{mu}\ast y^{\TRtwo,\cdot,\cdot,\cdot}_{u}+\bfz^{\TRtwo,\tau}_{vm}\ast\bfz^{\TRone,\cdot}_{mu}\ast y^{\TRtwo,\cdot,\cdot,\cdot}_{u}\notag
\\&+2\bfz^{\TRtwo,\tau}_{vm}\ast\bfz^{\TRone,\cdot}_{mu}\ast y^{\TRone\,\TRone,\cdot,\cdot,\cdot}_{u}+\bfz^{\TRone,\tau}_{vm}\ast(\bfz^{\TRone,\cdot}_{mu})^{\otimes2}\ast y^{\TRone\,\TRone,\cdot,\cdot,\cdot}_{u}.
\end{flalign} 
We now insert \eqref{eq1} and \eqref{eq2} into \eqref{j}. Let us also recall that $\bfz_{mu}^{\TRone\,\TRone,\cdot}=(\bfz_{mu}^{\TRone,\cdot})^{\otimes2}$ according to \eqref{tree notation}. Then some elementary algebraic manipulations and cancellations show that  
\begin{flalign}\label{eq}
\delta_{m}\left(\varXi_{vu}^{\tau}\right)=-\bfz_{vm}^{\TRthree,\tau}\ast y_{mu}^{\TRtwo,\cdot,\cdot,\cdot}-\bfz_{vm}^{\TRott,\tau}\ast y_{mu}^{\TRone\,\TRone,\cdot,\cdot,\cdot}-\bfz_{vm}^{\TRtwo,\tau}\ast R_{mu}^{\TRone,\cdot,\cdot}-\bfz_{vm}^{\TRone,\tau}\ast R_{mu}^{y,\cdot}.
\end{flalign} 
We now bound successively the 4 terms in the right hand side of \eqref{eq}. First we apply a small variant of \eqref{reg of conv product} and \eqref{reg of conv product 12}, which takes into account the fact that increments of the form $y^{\TRtwo}_{mu}$ and $y^{\TRone\,\TRone}_{mu}$ are considered. We also bound the terms  involving $y^{\TRtwo,u,u,u}_{mu}$, $y^{\TRone\,\TRone,u,u,u}_{mu}$ properly in~\eqref{reg of conv product} and~\eqref{reg of conv product 12}. Resorting to \eqref{controlled norm}, we get 
\begin{multline}\label{eq 1}
\left|\bfz_{vm}^{\TRthree,\tau}\ast y_{mu}^{\TRtwo,\cdot,\cdot,\cdot}+\bfz_{vm}^{\TRott,\tau}\ast y_{mu}^{\TRone\,\TRone,\cdot,\cdot,\cdot}\right|
\\
\lesssim \left(\|y^{\TRtwo}\|_{(\alpha,\gamma),1,2,3}+\|y^{\TRone\,\TRone}\|_{(\alpha,\gamma),1,2,3}\right)\vertiii{\bfz}_{(\alpha,\gamma)}\left|u-m\right|^{\rho}\left|\tau-m\right|^{-\gamma}\left|v-m\right|^{3\rho+\gamma}.
\end{multline} 
where we recall that $\vertiii{\bfz}_{(\alpha,\gamma)}$ was defined in \eqref{eq:norm like z}. 
Next invoking the fact that $R^{y}\in \mathcal{V}^{(3\rho+3\gamma,3\gamma)}$ and Proposition \ref{one step conv}, we obtain
\begin{equation}\label{eq 2}
|\bfz_{vm}^{\TRone,\tau}\ast R_{mu}^{y,\cdot}|\leq\| R^{y}\|_{(3\rho+3\gamma,3\gamma)}\| \bfz^{\TRone}\|_{(\alpha,\gamma)}||\tau-v|^{-\gamma}|u-m|^{4\rho+\gamma}.
\end{equation}
Eventually, resorting to Theorem \ref{two step conv} and owing to the fact that $R^{\TRone}\in \mathcal{W}^{(2\rho+2\gamma,2\gamma)}_{2}$, we can check that 
\begin{equation}\label{eq 3}
\left|\bfz_{vm}^{\TRtwo,\tau}\ast R_{mu}^{\TRone,\cdot,\cdot}\right|\leq\| R^{\TRone}\|_{(2\rho+2\gamma,2\gamma),1,2}\vertiii{\bfz}_{(\alpha,\gamma)}^2\left|\tau-v\right|^{-\gamma}\left|v-u\right|^{4\rho+\gamma}.
\end{equation}
Plugging \eqref{eq 1}, \eqref{eq 2} and \eqref{eq 3} into \eqref{eq}, we have thus obtained 
\begin{equation}
\left|\delta_{m}\varXi^\tau_{vu}\right|\lesssim \left\| \left(y,y^{\TRone},y^{\TRtwo},y^{\TRone\,\TRone}\right)\right\|_{\bfz,\left(\alpha,\gamma\right)}\vertiii{\bfz}_{\left(\alpha,\gamma\right)}^2|\tau-v|^{-\gamma}|v-u|^{4\rho+\gamma}. 
\end{equation}
Recall that by assumption,   $\rho>\frac{1}{4}$, and therefore $\beta\equiv 4\rho+\gamma>1$. We have thus obtained that $\|\delta\varXi\|_{(\beta,\gamma),1}<\infty$. Following along the same lines above, it is readily checked  that also $\|\delta\varXi\|_{(\beta,\gamma),1,2}<\infty$. Therefore we apply directly the Volterra sewing Lemma \ref{(Volterra sewing lemma)} in order to achieve the claims in  \eqref{Volterra integral}, \eqref{h} and \eqref{i}.
\\
We now proceed to prove the last claim, \ref{three step conv d} . To this aim, observe that the bound in  \eqref{h} together with the fact that $\bz^{\TRthree},\bz^{\TRott} \in\mathcal{V}^{\left(3\rho+3\gamma,3\gamma\right)}(\RR^{d})$, implies the existence of a function  $R\in\mathcal{V}^{\left(3\rho+3\gamma,3\gamma\right)}(\RR^{m})$ such that  
\begin{equation}\label{d01}
w^{\tau}_{ts}=\bfz^{\TRone,\tau}_{ts}\ast y^{\cdot}_{s}+\bfz^{\TRtwo,\tau}_{ts}\ast y^{\TRone,\cdot,\cdot}_{s}+R^{\tau}_{ts}.
\end{equation}

 From \eqref{d01} it is readily seen that $w^{\tau}$ can be decomposed as a controlled Volterra path in $\mathcal{D}_{\mathbf{z}}^{(\alpha,\gamma)}(\Delta_{2},\RR^{m})$ with $w^{\TRone,\tau,p}_{t}=y^{p}_{t}$, $w^{\TRtwo,\tau,q,p}_{t}=y^{\TRone,q,p}_{t}$, $w^{\TRone\,\TRone,\tau,q,p}_{t}=0$. This finishes our proof.  
\end{proof}
\begin{remark}\label{rek 3}
From Theorem \ref{three step conv} $(d)$, we know that the process $w$ defined by \eqref{Volterra integral} satisfies
\[
w^{\TRone,\tau,p}_{t}=y^{p}_{t}=w^{\TRone,p}_{t}.
\]
Therefore $w^{\TRone}$ depends on two variables instead of 3 variables in the general definition \eqref{f}.  In the same way, we have
\[
w^{\TRtwo,\tau,q,p}_{t}=y^{\TRone,q,p}_{t}=w^{\TRtwo,q,p}_{t},
\]
that is, $w^{\TRtwo}$ depends on three variables (vs 4 variables in the general definition \eqref{f}). Therefore we can refine Theorem \ref{three step conv} and state that the Volterra rough integration sends $(y,y^{\TRone},y^{\TRtwo},y^{\TRone\,\TRone})\in \mathcal{D}^{(\alpha,\gamma)}_{\mathbf{z}}(\RR^{d})$ to a controlled process $(w,w^{\TRone},w^{\TRtwo},0)\in \hat{\mathcal{D}}_{\mathbf{z}}^{(\alpha,\gamma)}(\RR^{d})$, where the space $\hat{\mathcal{D}}_{\mathbf{z}}^{(\alpha,\gamma)}(\RR^{d})$ is defined by
\begin{flalign}\label{D hat}
\hat{ \mathcal{D}}_{\mathbf{z}}^{(\alpha,\gamma)}(\RR^{d}):= \Big\{\left(w,w^{\TRone},w^{\TRtwo},0\right)\in \mathcal{D}_{\mathbf{z}}^{(\alpha,\gamma)}(\Delta_{2};\RR^{d})\,\big|\, &w_s^{\TRone,\tau,p}= w_s^{\TRone,p},\notag
 \\ &w_{s}^{\TRtwo,\tau,q,p}=w_{s}^{\TRtwo,q,p}, \text{and} \, \, w_{s}^{\TRone\,\TRone,\tau,q,p}=0\Big\} .
\end{flalign}
\end{remark}
\subsubsection{The composition of a Volterra  controlled processes with a smooth function}
With Remark \ref{rek 3} in mind, we will now prove that one can compose processes in $\hat{ \mathcal{D}}_{\mathbf{z}}^{(\alpha,\gamma)}$ and still get a controlled process. 
\begin{prop}\label{controlled path for composition function}
Let $f\in C^{4}_{b}(\RR^{d};\RR^m)$ and assume $(y, y^{\TRone},y^{\TRtwo},0)\in \hat{ \mathcal{D}}_{\mathbf{z}}^{(\alpha, \gamma)}(\RR^d)$ as given in Remark~\ref{rek 3}. Also recall our Notation~\ref{production notation} for matrix products. Then the composition $f(y)$ can be seen as a controlled path $(\phi,\phi^{\TRone},\phi^{\TRtwo},\phi^{\TRone\,\TRone})$, where $\phi=f(y)$ and where in the decomposition \eqref{f} we have 
\begin{align}\label{1_{st} derivative}
\phi^{\TRone,q,p}_{t}=&y_{t}^{\TRone,p}f^\prime\left(y_{t}^{q}\right),
\\
\phi^{\TRtwo,r,q,p}_{t}=y_{t}^{\TRtwo,q,p}f^\prime\left(y_{t}^{r}\right),\quad \text {and}& \quad \phi^{\TRone\,\TRone,r,q,p}_{t}=\frac{1}{2}(y_{t}^{\TRone,q})\otimes(y_{t}^{\TRone,p})f^{\prime\prime}\left(y_{t}^{r}\right).\label{2_{nd} derivative}
\end{align}
Moreover, there exists a constant $C=C_{\alpha,\gamma,\|f\|_{C^{4}_{b}}}>0$  such that
\begin{eqnarray}\label{bound for composition}
\|(\phi,\phi^{\TRone},\phi^{\TRtwo},\phi^{\TRone\,\TRone})\|_{\bfz;(\alpha,\gamma)}\leq C (1+\vertiii{\bfz}_{(\alpha,\gamma)})^{3}  \Big[\left(\left|y^{\TRone}_{0}\right|+|y^{\TRtwo}_{0}|+\|(y,y^{\TRone},y^{\TRtwo},0)\|_{\bfz,(\alpha,\gamma)}\right)\notag
\\ \vee \left(|y^{\TRone}_{0}|+|y^{\TRtwo}_{0}|+\|(y,y^{\TRone},y^{\TRtwo},0)\|_{\bfz,(\alpha,\gamma)}\right)^{3}\Big] .
\end{eqnarray}
\end{prop}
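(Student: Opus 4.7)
The strategy is to apply Taylor's formula to $f(y^\tau_t) - f(y^\tau_s)$ to second order, substitute the controlled Volterra decompositions \eqref{f}--\eqref{g} for $y$ and $y^{\TRone}$ (remembering that $y^{\TRone\TRone} = 0$ since $(y,y^{\TRone},y^{\TRtwo},0) \in \hat{\mathcal{D}}^{(\alpha,\gamma)}_{\bfz}$), and then identify the coefficients of $\bfz^{\TRone,\tau}_{ts}$, $\bfz^{\TRtwo,\tau}_{ts}$ and $\bfz^{\TRone\TRone,\tau}_{ts}$ with the proposed Gubinelli derivatives $\phi^{\TRone}, \phi^{\TRtwo}, \phi^{\TRone\TRone}$. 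The crucial algebraic observation is that $\bfz^{\TRone\TRone,\tau}_{ts} = (\bfz^{\TRone,\tau}_{ts})^{\otimes 2}$ by \eqref{tree notation}, so the quadratic term in Taylor automatically produces the $\phi^{\TRone\TRone}$ coefficient involving $f''$.

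Concretely, I would begin by writing
\begin{equation*}
f(y^\tau_t) - f(y^\tau_s) = f'(y^\tau_s)\, y^\tau_{ts} + \tfrac{1}{2} f''(y^\tau_s) (y^\tau_{ts})^{\otimes 2} + R^{[2],\tau}_{ts},
\end{equation*}
with $|R^{[2],\tau}_{ts}| \lesssim \|f\|_{C^3_b}|y^\tau_{ts}|^3$. Plugging \eqref{f} into the linear piece and noting that $f'(y^\tau_s)$ commutes with $\ast$ in the sense of Notation \ref{production notation} (since it is independent of the inner convolution variable), I recover $\bfz^{\TRone,\tau}_{ts} \ast \phi^{\TRone,\tau,\cdot}_s + \bfz^{\TRtwo,\tau}_{ts} \ast \phi^{\TRtwo,\tau,\cdot,\cdot}_s$, modulo the term $f'(y^\tau_s)R^{y,\tau}_{ts}$. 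For the quadratic piece, I expand $(y^\tau_{ts})^{\otimes 2}$ via \eqref{f} and retain only the square of the leading summand $\bfz^{\TRone,\tau}_{ts} \ast y^{\TRone,\cdot}_s$; at leading order this equals $\bfz^{\TRone\TRone,\tau}_{ts}(y^{\TRone,s}_s)^{\otimes 2}$, matching the leading order of $\bfz^{\TRone\TRone,\tau}_{ts} \ast \phi^{\TRone\TRone,\tau,\cdot,\cdot}_s$ by the definition of $\phi^{\TRone\TRone}$.

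Next, I would verify that the collected remainder $R^{\phi,\tau}_{ts}$ lies in $\mathcal{V}^{(3\rho+3\gamma,3\gamma)}$. It assembles: (i) the Taylor remainder $R^{[2],\tau}_{ts}$, which is cubic in $y^\tau_{ts}$ and hence bounded by $|\tau-t|^{-3\gamma}|t-s|^{3\alpha}\wedge|\tau-s|^{3\rho}$; (ii) $f'(y^\tau_s) R^{y,\tau}_{ts}$, in the target space by assumption on $R^y$; (iii) cross products such as $(\bfz^{\TRone,\tau}_{ts} \ast y^{\TRone,\cdot}_s) \otimes (\bfz^{\TRtwo,\tau}_{ts} \ast y^{\TRtwo,\cdot,\cdot}_s)$, giving mixed-singularity bounds of the form $|\tau-t|^{-2\gamma}|t-s|^{3\rho+2\gamma}$; and (iv) the convolution error for $\bfz^{\TRone\TRone,\tau}_{ts} \ast \phi^{\TRone\TRone,\tau,\cdot,\cdot}_s$ controlled by Theorem \ref{thm 3 Volterra convolution 12}. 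Mixed-singularity terms are absorbed into $\mathcal{V}^{(3\rho+3\gamma,3\gamma)}$ via Lemma \ref{relation of space}. An analogous computation for $\phi^{\TRone,q,p}_{ts}$---obtained by telescoping $y^{\TRone,p}_t f'(y^q_t) - y^{\TRone,p}_s f'(y^q_s)$ and applying a first-order Taylor expansion to $f'(y^q_\cdot)$---produces the relation \eqref{g}, where the factor $2$ in front of $\phi^{\TRone\TRone}$ appears as the symmetrization constant relating $f''(y^q)(y^{\TRone,p} \otimes y^{\TRone,\cdot})$ to $2 \cdot \tfrac{1}{2}f''(y^q)(y^{\TRone,p} \otimes y^{\TRone,\cdot})$, with a remainder $R^{\phi^{\TRone}}$ belonging to $\mathcal{W}_2^{(2\rho+2\gamma,2\gamma)}$. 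The $C^4_b$ regularity of $f$ is needed to handle $\tau$-variations of $f''(y^\tau_s)$ that arise in the $(1,2)$-norms of $\phi^{\TRone\TRone}$.

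\emph{Main obstacle.} The algebraic identification of the Gubinelli derivatives is routine; the actual difficulty is the bookkeeping of the $(1,2)$-norm estimates for $R^\phi$ and $R^{\phi^{\TRone}}$, as demanded by Definitions \ref{Volterra space} and \ref{mod Volterra holder 2}. This requires tracking jointly the $(\alpha,\gamma),1,2$-estimates of $y$'s controlled decomposition, of the rough path $\bfz$, and of $f$ together with its derivatives, and invoking the embedding of Lemma \ref{relation of space} at the right moments to convert mixed-singularity bounds into the desired $\mathcal{V}^{(3\rho+3\gamma,3\gamma)}$ and $\mathcal{W}_2^{(2\rho+2\gamma,2\gamma)}$ estimates. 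The cubic factor in the norm bound finally emerges from products of the form $f''(y)(y^{\TRone})^{\otimes 2}$ appearing in $\phi^{\TRone\TRone}$, combined with a linear contribution from $y$ itself through $\phi = f(y)$.
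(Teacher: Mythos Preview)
Your proposal is correct and follows essentially the same approach as the paper: second-order Taylor expansion of $f(y^\tau)_{ts}$, substitution of the controlled decompositions \eqref{f}--\eqref{g}, identification of $\phi^{\TRone},\phi^{\TRtwo},\phi^{\TRone\TRone}$, and then separate verification that $R^\phi\in\mathcal{V}^{(3\rho+3\gamma,3\gamma)}$ and $R^{\phi^{\TRone}}\in\mathcal{W}_2^{(2\rho+2\gamma,2\gamma)}$, with Lemma~\ref{relation of space} used to absorb the mixed-singularity cross terms.

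One point the paper makes explicit that you pass over quickly: in checking \eqref{g} for $\phi^{\TRone}$, the telescoping of $\phi^{\TRone,\tau,\tau}_{ts}$ produces a term $y^{\TRone,\tau}_t\,\bfz^{\TRone,\tau}_{ts}\ast y^{\TRone,\cdot}_s\,f''(y^\tau_s)$ (from expanding $f'(y^\tau_t)-f'(y^\tau_s)$ and then $y^\tau_{ts}$), which must cancel against the subtracted term $\bfz^{\TRone,\tau}_{ts}\ast(y^{\TRone,\cdot}_s\otimes y^{\TRone,\tau}_s)\,f''(y^\tau_s)$ coming from $2\phi^{\TRone\TRone}$. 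These two expressions are not literally identical because of the tensor/matrix ordering conventions of Notation~\ref{production notation}; the paper isolates this as a separate lemma (Lemma~\ref{equation for claim2}) showing they coincide, so that only the increment $y^{\TRone,\tau}_{ts}\,\bfz^{\TRone,\tau}_{ts}\ast y^{\TRone,\cdot}_s\,f''(y^\tau_s)$ survives, which has the required $2\rho$-regularity. Your description of the factor $2$ as a ``symmetrization constant'' is the right intuition, but be aware that this cancellation hinges on a coordinate computation rather than an abstract symmetry argument.
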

\begin{proof}
We separate this proof into two parts: in the first step we will find the appropriate expression for $\phi^{\TRone}$, $\phi^{\TRtwo}$ and $\phi^{\TRone\,\TRone}$ ( namely \eqref{1_{st} derivative} and \eqref{2_{nd} derivative}), as well as proving that  $(\phi,\phi^{\TRone},\phi^{\TRtwo},\phi^{\TRone\,\TRone})\in \mathcal{D}^{(\alpha,\gamma)}_{\bfz}$. In the second step, we will prove relation \eqref{bound for composition}.
Without loss of generality, we do the below analysis component-wise for $f(y)=(f_1(y),\ldots,f_m(y))$, where each $f_i:\RR^d\rightarrow \RR$ for $i=1,\ldots,m$.  With a slight abuse of notation, we drop the subscript notation, and still just write $f(y)$ representing each component. 
\smallskip

\noindent
\emph{Step 1: Expression for $\phi^{\TRone}$, $\phi^{\TRtwo}$ and $\phi^{\TRone\,\TRone}$.} An elementary application of Taylor's formula  enables us  to decompose the increment $f(y^{\tau})_{ts}$ into 
\begin{equation}\label{348}
f(y^{\tau})_{ts}=y^{\tau}_{ts}f^{\prime}(y^{\tau}_{s})+\frac{1}{2}(y^{\tau}_{ts})^{\otimes2}f^{\prime\prime}(y^{\tau}_{s})+r^{\tau}_{ts},
\end{equation}
where $r^{\tau}_{ts}=\frac{1}{6}(y^{\tau}_{ts})^{\otimes3}\int_0^1f^{(3)}(c^{\tau}_{ts}(\theta))d\theta$, where $c^{\tau}_{ts}(\theta)=\theta y^{\tau}_{s}+(1-\theta)y^{\tau}_{t}$. 
It is readily checked from~\eqref{348} that $r\in \mathcal{V}^{(3\alpha,3\gamma)}$. Indeed, it follows directly that 
\begin{equation*}
    \|r\|_{(3\alpha,3\gamma),1}\lesssim \|y\|_{(\alpha,\gamma),1}^3\|f\|_{C^3_b}. 
\end{equation*}
Furthermore, for $(s,t,\tau,\tau')\in \Delta_4$, we have 
\begin{multline*}
|r^{\tau'\tau}_{ts}|\leq 3 |y_{ts}^{\tau'\tau}||(y_{ts}^{\tau'})^{\otimes 2}+(y_{ts}^{\tau})^{\otimes 2}|\|f\|_{C^3_b}
\\
+\|y\|_{(\alpha,\gamma),1}^3 \|f\|_{C^4_b} (|y_s^{\tau'\tau}|+|y_t^{\tau'\tau}|)(|\tau-t|^{-\gamma}|t-s|^\alpha \wedge |\tau-s|^\rho)^3.
\end{multline*}
It is simply checked that the following inequality hold: 
\begin{equation*}
    |y_s^{\tau'\tau}|\lesssim |y_0|+\|y\|_{(\alpha,\gamma),1,2}|\tau'-\tau|^\eta[|\tau-s|^{\eta-\zeta}|s^\alpha|\wedge |s|^{\rho-\zeta}],
\end{equation*}
and thus it follows that 
\begin{equation*}
    \|r\|_{(3\alpha,3\gamma),1,2}\lesssim (\|y\|_{(\alpha,\gamma),1,2}\|y\|_{(\alpha,\gamma),1}^2+\|y\|_{(\alpha,\gamma),1}^3(|y_0|+\|y\|_{(\alpha,\gamma),1,2}))\|f\|_{C^3_b}.
\end{equation*}
Combining the above estimates, we get  
\begin{equation}\label{ëq:bound first r}
    \|r\|_{(3\alpha,3\gamma)}\lesssim (|y_0|+\|y\|_{(\alpha,\gamma)})^3\|f\|_{C^4_b}.
\end{equation}

Now observe that $(y,y^{\TRone},y^{\TRtwo},0)\in\hat{\mathcal{D}}^{(\alpha,\gamma)}_{\bfz}$ where $\hat{\mathcal{D}}^{(\alpha,\gamma)}_{\bfz}$ is the subset of the space $\mathcal{D}^{(\alpha,\gamma)}_{\bfz}$ as defined in Remark \ref{rek 3}. In particular, $y,\,\,y^{\TRone},\,\,y^{\TRtwo}$ satisfy relation \eqref{f}. Then taking squares in the relation~\eqref{f}, we end up with  
\begin{equation}\label{349}
(y_{ts}^{\tau})^{\otimes2}=(\bfz_{ts}^{\TRone,\tau} \ast y_{s}^{\TRone,\tau,\cdot})^{\otimes2}+\tilde{r}^{\tau}_{ts},
\end{equation}
where the reminder term $\tilde{r}^{\tau}_{ts}$ is defined by 
\begin{flalign}\label{350}
&\tilde{r}^{\tau}_{ts}=(\bfz^{\TRtwo,\tau}_{ts} \ast y^{\TRtwo,\tau,\cdot,\cdot}_{s})^{\otimes2}+(R^{y,\tau}_{ts})^{\otimes2}+(\bfz^{\TRtwo,\tau}_{ts} \ast y^{\TRtwo,\tau,\cdot,\cdot}_{s})\otimes (\bfz^{\TRone,\tau}_{ts}\ast y^{\TRone,\tau,\cdot}_{s})+(\bfz^{\TRone,\tau}_{ts}\ast y^{\TRone,\tau,\cdot}_{s})\otimes (\bfz^{\TRtwo,\tau}_{ts} \ast y^{\TRtwo,\tau,\cdot,\cdot}_{s})\notag
\\&+(\bfz^{\TRone,\tau}_{ts}\ast y^{\TRone,\tau,\cdot}_{s})\otimes R^{y,\tau}_{ts}+R^{y,\tau}_{ts}\otimes (\bfz^{\TRone,\tau}_{ts}\ast y^{\TRone,\tau,\cdot}_{s})+(\bfz^{\TRtwo,\tau}_{ts}\ast y^{\TRtwo,\tau,\cdot,\cdot}_{s})\otimes R^{y,\tau}_{ts}+R^{y,\tau}_{ts}\otimes (\bfz^{\TRtwo,\tau}_{ts}\ast y^{\TRtwo,\tau,\cdot,\cdot}_{s}).
\end{flalign}
Plugging \eqref{349} into \eqref{348}, we get 
\begin{flalign*}
f(y^{\tau})_{ts}=&y^{\tau}_{ts}f^{\prime}(y^{\tau}_{s})+\frac{1}{2}(\bfz^{\TRone,\tau}_{ts}\ast y^{\TRone,\cdot}_{s})^{\otimes2}f^{\prime\prime}(y^{\tau}_{s})+\frac{1}{2}\tilde{r}^{\tau}_{ts}f^{\prime\prime}(y^{\tau}_{s})+r^{\tau}_{ts},
\end{flalign*}
We now invoke \eqref{f} in order to further decompose $y^{\tau}_{ts}$ above. We end up with
\begin{flalign}\label{351}
f(y^{\tau})_{ts}=&\bfz^{\TRone,\tau}_{ts}\ast y^{\TRone,\cdot}_{s}f^{\prime}(y^{\tau}_{s})+\bfz^{\TRtwo,\tau}_{ts}\ast y^{\TRtwo,\cdot,\cdot}_{s}f^{\prime}(y^{\tau}_{s})+\frac{1}{2}(\bfz^{\TRone,\tau}_{ts}\ast y^{\TRone,\cdot}_{s})^{\otimes2}f^{\prime\prime}(y^{\tau}_{s})+R^{\phi,\tau}_{ts},
\end{flalign}
where the reminder $R^{\phi}$ is defined by 
\begin{equation}\label{def of R}
R^{\phi,\tau}_{ts}=R^{y,\tau}_{ts}f^{\prime}(y^{\tau}_{s})+\frac{1}{2}\tilde{r}^{\tau}_{ts}f^{\prime\prime}(y^{\tau}_{s})+r^{\tau}_{ts}.
\end{equation}
Thanks to \eqref{351} and the  definition of the relations in \eqref{1_{st} derivative}-\eqref{2_{nd} derivative},  the proof of $(\phi,\phi^{\TRone},\phi^{\TRtwo},\phi^{\TRone\,\TRone})\in\mathcal{D}^{(\alpha,\gamma)}_{\bfz}$ are now reduced to proving the  following two claims:
\begin{itemize}
\item
Claim 1: The remainder term $R^{\phi,\tau}_{ts}$ in \eqref{def of R} is of order 3. Specifically, due to \eqref{348} and the fact that $(y,y^{\TRone},y^{\TRtwo},0)\in \hat{\cd}^{(\alpha,\gamma)}$, relation \eqref{def of R}  this is reduced to the following claim:
\begin{equation}\label{claim1}
\tilde{r}\in \mathcal{V}^{(3\rho+3\ga,3\ga)}.
\end{equation}

\item
Claim 2: $\phi^{\TRone}$  fulfills relation \eqref{g}, which can be written as 
\begin{equation}\label{claim2}
\phi^{\TRone,\cdot,\cdot}-\bfz^{\TRone,\cdot}\ast (\phi^{\TRtwo,\cdot,\cdot,\cdot}+2\phi^{\TRone \, \TRone,\cdot,\cdot,\cdot})\in \mathcal{W}^{(2\rho+2\ga,2\gamma)}_{2},
\end{equation}
where we recall that $\phi^{\TRone}$, $\phi^{\TRtwo}$, $\phi^{\TRone\,\TRone}$ are defined by \eqref{1_{st} derivative}-\eqref{2_{nd} derivative}.
\end{itemize}
In the following, we will prove those two Claims separately. 

\smallskip

\noindent
\emph{Proof of Claim 1.} 
According to relation \eqref{350}, there are eight terms to evaluate in $\tilde{r}$. For conciseness, we can consider one of these terms, say the increment $I^{\tau}_{ts}$ defined by $I^{\tau}_{ts}=(\bfz^{\TRtwo,\tau}_{ts}\ast y^{\TRtwo,\tau,\cdot,\cdot}_{s})\otimes (\bfz^{\TRone,\tau}_{ts}\ast y^{\TRone,\tau,\cdot}_{s})$, and the remaining terms will follow directly from similar considerations. To this aim, a first observation is that since $(y,y^{\TRone},y^{\TRtwo},0)\in \hat{\mathcal{D}}^{(\alpha,\gamma)}_{\bfz}$ as given in \eqref{D hat}, then both  $y^{\TRone}$ and $y^{\TRtwo}$ don't dependent on $\tau$ and we have $I^{\tau}_{ts}=(\bfz^{\TRtwo,\tau}_{ts}\ast y^{\TRtwo,\cdot,\cdot}_{s})\otimes (\bfz^{\TRone,\tau}_{ts}\ast y^{\TRone,\cdot}_{s})$. Moreover, due to the fact that $I$ is part of the reminder $\tilde{r}$, we have to evaluate $\|I\|_{(3\rho+2\ga, 2\ga)}$. Owing to Definition \ref{Volterra space}, this is equivalent to evaluate 
\begin{equation}\label{evaluate of r1}
\| I \|_{(3\rho+2\ga,2\ga)}=\| I \|_{(3\rho+2\ga,2\ga),1}+\| I \|_{(3\rho+2\ga,2\gamma),1,2}.
\end{equation}
In order to upper bound the right hand side of \eqref{evaluate of r1}, it suffices to estimate $|I^{\tau}_{ts}|$ and $|I^{qp}_{ts}|$. Some elementary computations reveal that
\begin{flalign}\label{cal of I1}
\left|I^{\tau}_{ts}\right|=\left|\left(\bfz^{\TRtwo,\tau}_{ts} \ast y^{\TRtwo,\cdot,\cdot}_{s}\right)\otimes \left(\bfz^{\TRone,\tau}_{ts}\ast y^{\TRone,\cdot}_{s}\right)\right|\lesssim
\left|\bfz^{\TRtwo,\tau}_{ts} \ast y^{\TRtwo,\cdot,\cdot}_{s}\right|\left|\bfz^{\TRone,\tau}_{ts}\ast y^{\TRone,\cdot}_{s}\right|,
\end{flalign}
and
\begin{flalign}\label{cal of I12}
\left|I^{qp}_{ts}\right|&=\left|I^{q}_{ts}-I^{p}_{ts}\right|=\left|\left(z^{\TRtwo,q}_{ts} \ast y^{\TRtwo,\cdot,\cdot}_{s}\right)\otimes \left(\bfz^{\TRone,q}_{ts}\ast y^{\TRone,\cdot}_{s}\right)-\left(\bfz^{\TRtwo,p}_{ts} \ast y^{\TRtwo,\cdot,\cdot}_{s}\right)\otimes \left(\bfz^{\TRone,p}_{ts}\ast y^{\TRone,\cdot}_{s}\right)\right|\notag
\\
&\lesssim \left|\bfz^{\TRone,qp}_{ts}\ast y^{\TRone,\cdot}_{s}\right|\left|\bfz^{\TRtwo,q}_{ts}\ast y^{\TRtwo,\cdot,\cdot}_{s}\right|+\left|\bfz^{\TRtwo,qp}_{ts}\ast y^{\TRtwo,\cdot,\cdot}_{s}\right|\left|\bfz^{\TRone,q}_{ts}\ast y^{\TRone,\cdot}_{s}\right|.
\end{flalign}
To bound the right hand side of \eqref{cal of I1}, thanks to a slight variation of Proposition \ref{one step conv} and Theorem \ref{two step conv}, we have 
\begin{flalign*}
\left|I^{\tau}_{ts}\right| \lesssim  (|y^{\TRtwo}_0|+|y^{\TRone}_0|+\|(y,y^{\TRone},y^{\TRtwo},0)\|_{\bz,(\alpha,\gamma)})^2\vertiii{\bz}_{(\alpha,\gamma)}^2\left(\left[\left|\tau-t\right|^{-2\gamma}\left|t-s\right|^{3\rho+2\gamma}\right]\wedge \left|\tau-s\right|^{3\rho}\right).
\end{flalign*}
Similarly, we can bound $|I^{qp}_{ts}|$ is the following way for all $\beta\in[0,1]$ and $\zeta\in[0,3\rho)$:
\begin{multline}
\left|I^{qp}_{ts}\right|\lesssim (|y^{\TRtwo}_0|+|y^{\TRone}_0|+\|(y,y^{\TRone},y^{\TRtwo},0)\|_{\bz, (\alpha,\gamma)})^2\vertiii{\bz}_{(\alpha,\gamma)}^2
\\
\times \left|q-p\right|^{\beta}\left|p-t\right|^{-\beta+\zeta}\left(\left[\left|p-t\right|^{-2\gamma-\zeta}\left|t-s\right|^{3\rho+2\gamma}\right]\wedge \left|p-s\right|^{3\rho-\zeta}\right).
\end{multline}
It follows by definition of the quantities  $\|I\|_{(3\rho+2\ga,2\gamma),1}$ and $\|I\|_{(3\rho+2\ga,2\gamma),1,2}$ as given in Definition~\ref{Volterra space}, that
\begin{flalign}\label{bound of I1}
\| I \|_{(3\rho+2\ga,2\gamma),1}\vee \| I \|_{(3\rho+2\ga,2\gamma),1,2} \lesssim(|y^{\TRtwo}_0|+|y^{\TRone}_0|+\|(y,y^{\TRone},y^{\TRtwo},0)\|_{(\alpha,\gamma)})^2\vertiii{\bz}_{\bz, (\alpha,\gamma)}^2
\end{flalign}
which implies $I \in\mathcal{V}^{\left(3\rho+3\ga,3\gamma\right)}$ according to Lemma \ref{relation of space}. Similarly, we let the patient reader check that 
\begin{equation}\label{bound of I2}
(\bfz^{\TRone,\tau}_{ts}\ast y^{\TRone,\tau,\cdot}_{s})\otimes (\bfz^{\TRtwo,\tau}_{ts}\ast y^{\TRtwo,\tau,\cdot,\cdot}_{s})\in\mathcal{V}^{\left(3\rho+2\ga,2\gamma\right)}, \quad (\bfz^{\TRtwo,\tau}_{ts}\ast y^{\TRtwo,\cdot,\cdot}_{s})^{\otimes2}\in \mathcal{V}^{(4\rho+2\ga,2\ga)}, \quad (R^{y}_{ts})^{\otimes2}\in \mathcal{V}^{(6\rho+6\gamma,6\gamma)},
\end{equation} 
as well as 
\begin{equation}\label{bound of I3}
(\bfz^{\TRone,\tau}_{ts}\ast y^{\TRone,\cdot}_{s})\otimes R^{y}_{ts} \in \mathcal{V}^{(4\rho+4\gamma,4\ga)}, \quad (\bfz^{\TRtwo,\tau}_{ts}\ast y^{\TRtwo,\cdot,\cdot}_{s})\otimes R^{y}_{ts}\in \mathcal{V}^{(5\rho+4\ga,4\ga)}.
\end{equation}
In fact the appropriate norm for each of these terms is easily seen to be bounded by the product $(\vertiii{\bz}_{(\alpha,\gamma)}+\vertiii{\bz}_{(\alpha,\gamma)}^2)(|y^{\TRtwo}_0|+|y^{\TRone}_0|+\|(y,y^{\TRone},y^{\TRtwo},0)\|_{(\alpha,\gamma)})^2$.
Combining \eqref{bound of I1}, \eqref{bound of I2} and \eqref{bound of I3}, we have thus obtained that $\tilde{r}\in \mathcal{V}^{(3\rho+3\ga,3\ga)}$, and it follows that 
\begin{equation}\label{eq:r tilde bound}
    \|\tilde{r}\|_{(3\rho+3\ga,\ga)}\lesssim (\vertiii{\bz}_{(\alpha,\gamma)}+\vertiii{\bz}_{(\alpha,\gamma)}^2) (|y^{\TRtwo}_0|+|y^{\TRone}_0|+\|(y,y^{\TRone},y^{\TRtwo},0)\|_{\bz,(\alpha,\gamma)})^2
\end{equation}
\smallskip

\noindent
\emph{Proof of Claim 2.} 
Before proving relation \eqref{claim2}, we will give some algebraic insight on the terms of $\phi^{\sigma}$ for $\sigma\in \mathcal{F}_{2}$. Indeed, resorting to \eqref{351}, we can safely set 
\begin{equation}\label{1st relation}
\phi^{\TRone,q,p}_{t}= y_{t}^{\TRone,p}f^{\prime}\left(y_{t}^{q}\right),
\end{equation}
as stated in \eqref{1_{st} derivative}. According to \eqref{351}, we also let 
\begin{equation}\label{2nd relation}
\phi^{\TRtwo,r,q,p}_{t}=y_{t}^{\TRtwo,q,p} f^{\prime}\left(y_{t}^{r}\right),
\qquad \phi^{\TRone \, \TRone,r,q,p}_{t}=\frac{1}{2}(y_{t}^{\TRone,q})\otimes(y_{t}^{\TRone,p}) f^{\prime\prime}\left(y_{t}^{r}\right).
\end{equation}
With relation \eqref{f} in mind, we can  rewrite \eqref{351} as 
\begin{flalign}\label{2nd relation (1)}
f(y^{\tau}_{t})-f(y^{\tau}_{s})&=\bfz_{ts}^{\TRone,\tau} \ast \phi^{\TRone,\tau,\cdot}_{s}+\bfz_{ts}^{\TRtwo,\tau} \ast \phi^{\TRtwo,\tau,\cdot,\cdot}_{s}+(\bfz_{ts}^{\TRone,\tau})^{\otimes2} \ast \phi^{\TRone\,\TRone,\tau,\cdot,\cdot}_{s}+R_{ts}^{\phi,\tau}
\end{flalign}
 Let us briefly give a few details regarding the expressions on the right hand side of \eqref{2nd relation (1)}.
Specifically, we will explain how to compute $\bfz_{ts}^{\TRone,\tau} \ast \phi^{\TRone,\tau,\cdot}_{s}=\bfz^{\TRone,\tau}_{ts}\ast y^{\TRone,\cdot}_{s} f^{\prime}(y^{\tau}_{s})$. 
Referring  to Notation \ref{production notation}, the expression $\bfz^{\TRone,\tau}_{ts}\ast y^{\TRone,\cdot}_{s} f^{\prime}(y^{\tau}_{s})$ can be rewritten as $[(\bfz^{\TRone,\tau}_{ts})^{\intercal}\ast( y^{\TRone,\cdot}_{s} f^{\prime}(y^{\tau}_{s}))^{\intercal}]^{\intercal}=f^{\prime}(y^{\tau}_{s})y^{\TRone,\cdot}_{s} \ast \bfz^{\TRone,\tau}_{ts}$,
where we have used also that $y^{\TRone,\cdot}_{s} f^{\prime}(y^{\tau}_{s})$ can be rewritten as $[(y^{\TRone,\cdot}_{s})^{\intercal} (f^{\prime}(y^{\tau}_{s}))^{\intercal}]^{\intercal}=f^{\prime}(y^{\tau}_{s})y^{\TRone,\cdot}_{s}$. 
In addition, notice that  $f^{\prime}(y_{s})\in \RR^d $, $y^{\TRone}_{s}\in\mathcal{L}(\mathbb{R}^{d}, \mathbb{R}^{d})$ and $\bfz^{\TRone,\tau}_{ts}\in \mathbb{R}^{d}$. Therefore the quantity  $\bfz^{\TRone,\tau}_{ts}\ast y^{\TRone,\cdot}_{s} f^{\prime}(y^{\tau}_{s})$ has to be interpreted as  an inner product, and we let the patient reader perform the same kind of manipulation for the term $z^{\TRtwo,\tau}_{ts}\ast y^{\TRtwo}f^{\prime}(y^{\tau}_{s})$. In the end we get that both the left hand side and the right hand side of \eqref{2nd relation (1)} are real-valued.

Now we are ready to prove \eqref{claim2}. To this aim we set 
\begin{equation}\label{definition of J}
J^{\tau,\cdot}_{ts}:=\phi^{\TRone,\tau,\cdot}_{ts}-\bfz^{\TRone,\tau}_{ts} \ast (\phi^{\TRtwo,\tau,\cdot,\cdot}_{s}+2\phi^{\TRone\,\TRone,\tau,\cdot,\cdot}_{s}).
\end{equation}
Our claim \eqref{claim2} amounts to show that $ J\in \mathcal{W}_{2}^{(2\rho+2\gamma,\gamma)}$,  with $ \mathcal{W}_{2}^{(2\rho+2\gamma,2\gamma)}$ given in Definition~\ref{mod Volterra holder 2}. 
Thanks to \eqref{1st relation} and \eqref{2nd relation}, we first write 
\begin{flalign}\label{356}
J^{\tau,\cdot}_{ts}&=y^{\TRone,\cdot}_{t}\left(f^{\prime}(y^{\tau}_{t})-f^{\prime}(y^{\tau}_{s})\right)+y^{\TRone,\cdot}_{ts}f^{\prime}(y^{\tau}_{s})-\bfz^{\TRone,\tau}_{ts}\ast y^{\TRtwo,\cdot,\cdot}_{s}f^{\prime}(y^{\tau}_{s})-\bfz^{\TRone,\tau}_{ts} \ast y^{\TRone,\cdot}_{s}\otimes y^{\TRone,\cdot}_{s} f^{\prime\prime}\left(y^{\tau}_{s}\right).
\end{flalign} 
We now  invoke \eqref{g}, recalling that $y^{\TRone\,\TRone}=0$ since we have assumed that $y\in \hat{\mathcal{D}}^{(\alpha,\gamma)}_{\bfz}$. Plugging this information in \eqref{356}, we end up with
\begin{flalign}\label{356(2)}
J^{\tau,\tau}_{ts}=y^{\TRone,\tau}_{t}\left(f^{\prime}(y^{\tau}_{t})-f^{\prime}(y^{\tau}_{s})\right)+R^{\TRone,\tau}_{ts}f^{\prime}(y^{\tau}_{s})-\bfz^{\TRone,\tau}_{ts} \ast y^{\TRone,\cdot}_{s}\otimes y^{\TRone,\tau}_{s}f^{\prime\prime}(y^{\tau}_{s}).\end{flalign}
Let us apply a Taylor expansion to  the first term of right hand side of \eqref{356(2)}. Specifically we write
\[
f^{\prime}(y^{\tau}_{t})-f^{\prime}(y^{\tau}_{s})-y^{\tau}_{ts}f^{\prime\prime}(y^{\tau}_{s})=F^{(2),\tau}_{ts},
\]
where the term $F^{(2),\tau}_{ts}=(F^{(2),\tau,1}_{ts},\ldots,F^{(2),\tau,d}_{ts})$ is defined as a reminder in a Taylor expansion. Namely consider multi-indices $\beta=(\beta_{1},\ldots,\beta_{d})$ with $\beta_{i}\in\{0,1,2\}$. We set $|\beta|=\sum_{j=1}^{d}\beta_{j}$ and  $|\beta|!=\Pi_{j=1}^{d}\beta_{j}!$. Then for $i=1,\ldots,d$,  $F^{(2),\tau,i}_{ts}$ is given by
\begin{equation}\label{Taylor of f prime 2}
F^{(2),\tau,i}_{ts}=2\sum_{|\beta|=2}\frac{(y^{\tau}_{ts})^{\otimes |\beta|}}{\beta!}\int_{0}^{1}(1-r){\partial}^{\beta}\left(\partial_{i}f(y^{\tau}_{s}+r  y^{\tau}_{st})\right)dr.
\end{equation}
With expression \eqref{Taylor of f prime 2} in hand and recalling \eqref{356(2)}, we thus get
\begin{flalign}\label{356{3}}
J^{\tau,\tau}_{ts}&=y^{\TRone,\tau}_{t}\left(f^{\prime}(y^{\tau}_{t})-f^{\prime}(y^{\tau}_{s})-y^{\tau}_{ts}f^{\prime\prime}(y^{\tau}_{s})\right)+y^{\TRone,\tau}_{t}y^{\tau}_{ts}f^{\prime\prime}(y^{\tau}_{s})-\bfz^{\TRone,\tau}_{ts} \ast y^{\TRone,\cdot}_{s}\otimes y^{\TRone,\tau}_{s}f^{\prime\prime}(y^{\tau}_{s})+R^{\TRone,\tau}_{ts}f^{\prime}(y^{\tau}_{s})\notag
\\&=y^{\TRone,\tau}_{t}F^{(2),\tau}_{ts}+R^{\TRone,\tau}_{ts}f^{\prime}(y^{\tau}_{s})+y^{\TRone,\tau}_{t}y^{\tau}_{ts}f^{\prime\prime}(y^{\tau}_{s})-\bfz^{\TRone,\tau}_{ts} \ast y^{\TRone,\cdot}_{s}\otimes y^{\TRone,\tau}_{s}f^{\prime\prime}(y^{\tau}_{s}),
\end{flalign}
Furthermore, we plug in identity \eqref{f} in the above expansion in order to expand the term $y^{\TRone,\tau}_{t}y^{\tau}_{ts}f^{\prime\prime}(y^{\tau}_{s})$ in~\eqref{356{3}}. This yields 
\begin{flalign}\label{356(4)}
J^{\tau,\tau}_{ts}&=y^{\TRone,\tau}_{t}F^{(2),\tau}_{ts}+R^{\TRone,\tau}_{ts}f^{\prime}(y^{\tau}_{s})+y^{\TRone,\tau}_{t}\bfz^{\TRtwo,\tau}_{ts}\ast y^{\TRtwo,\cdot,\cdot}_{s}f^{\prime\prime}(y^{\tau}_{s})+y^{\TRone,\tau}_{t}R^{\tau}_{ts}f^{\prime\prime}(y^{\tau}_{s})\notag
\\&+y^{\TRone,\tau}_{t}\bfz^{\TRone,\tau}_{ts}\ast y^{\TRone,\cdot}_{s}f^{\prime\prime}(y^{\tau}_{s})-\bfz^{\TRone,\tau}_{ts} \ast y^{\TRone,\cdot}_{s}\otimes y^{\TRone,\tau}_{s}f^{\prime\prime}(y^{\tau}_{s}).
\end{flalign}
Next we resort to the forthcoming identity \eqref{lemma323} in order to handle the term $\bfz^{\TRone,\tau}_{ts} \ast y^{\TRone,\cdot}_{s}\otimes~y^{\TRone,\tau}_{s}$ $f^{\prime\prime}(y^{\tau}_{s})$ above.  One obtains that the last two terms in \eqref{356(4)} combine into one term $y^{\TRone,\tau}_{ts}\bfz^{\TRone,\tau}_{ts} \ast y^{\TRone,\cdot}_{s}f^{\prime\prime}(y^{\tau}_{s})$. We end up with
\begin{flalign}\label{356(5)}
J^{\tau,\tau}_{ts}=&y^{\TRone,\tau}_{t}F^{(2),\tau}_{ts}+R^{\TRone,\tau}_{ts}f^{\prime}(y^{\tau}_{s})+y^{\TRone,\tau}_{t}\bfz^{\TRtwo,\tau}_{ts}\ast y^{\TRtwo,\cdot,\cdot}_{s}f^{\prime\prime}(y^{\tau}_{s})\notag
\\&+y^{\TRone,\tau}_{t}R^{\tau}_{ts}f^{\prime\prime}(y^{\tau}_{s})+y^{\TRone,\tau}_{ts}\bfz^{\TRone,\tau}_{ts}\ast y^{\TRone,\cdot}_{s}f^{\prime\prime}(y^{\tau}_{s}).
\end{flalign}
In the same way, we let the patient reader check that we can rewrite $J^{q,q}_{ts}-J^{p,p}_{ts}$ as
\begin{flalign}\label{357}
J^{q,q}_{ts}-J^{p,p}_{ts}&=J^{qp}_{1}+J^{qp}_{2}+J^{qp}_{3}+J^{qp}_{4}+J^{qp}_{5},
\end{flalign}
where the terms $J^{qp}_{1}$, $J^{qp}_{2}$, $J^{qp}_{3}$, $J^{qp}_{4}$, and $J^{qp}_{5}$ are defined respectively by 
\begin{flalign}
J^{qp}_{1}&=y^{\TRone,qp}_{t}F^{(2),q}_{ts}+y^{\TRone,p}_{t}\left(F^{(2),q}_{ts}-F^{(2),p}_{ts}\right)\notag
\\
J^{qp}_{2}&=\left(y^{\TRone,p}_{t}\bfz^{\TRtwo,qp}_{ts}\ast y^{\TRtwo,\cdot,\cdot}_{s}+ y^{\TRone,qp}_{t}\bfz^{\TRtwo,q}_{ts}\ast y^{\TRtwo,\cdot,\cdot}\right)f^{\prime\prime}(y^{q}_{s})+\left( y^{\TRone,p}_{t}\bfz^{\TRtwo,p}_{ts}\ast y^{\TRtwo,\cdot,\cdot}_{s}\right)\left(f^{\prime\prime}(y^{q}_{s})-f^{\prime\prime}(y^{p}_{s})\right)\notag
\\
J^{qp}_{3}&=\left( y^{\TRone,p}_{t}R^{qp}_{ts}+y^{\TRone,qp}_{t}R^{q}_{ts}\right)
f^{\prime\prime}(y^{q}_{s})+ y^{\TRone,p}_{t}R^{p}_{ts}\left(f^{\prime\prime}(y^{q}_{s})-f^{\prime\prime}(y^{p}_{s})\right),\label{357(1)}
\\
J^{qp}_{4}&=\left(y^{\TRone,qp}_{ts}\bfz^{\TRone,q}_{ts}\ast y^{\TRone,\cdot}_{s} +y^{\TRone,p}_{ts}\bfz^{\TRone,qp}_{ts}\ast y^{\TRone,\cdot}_{s}\right)f^{\prime\prime}(y^{q}_{s})+y^{\TRone,p}_{ts}\bfz^{\TRone,p}_{ts}\ast y^{\TRone,\cdot}_{s}\left(f^{\prime\prime}(y^{q}_{s})-f^{\prime\prime}(y^{p}_{s})\right)\notag
\\
J^{qp}_{5}&=R^{\TRone,qp}_{ts}f^{\prime}(y^{q}_{s})+R^{\TRone,p}_{ts}\left(f^{\prime}(y^{q}_{s})-f^{\prime}(y^{p}_{s})\right).\notag
\end{flalign} 

With \eqref{356(5)}-\eqref{357(1)} at hand, and recalling Definition \ref{mod Volterra holder 2} for the spaces $\mathcal{W}$, it is readily checked, using the information of the regularities in the different terms of $J_i$ for $i=1,\ldots,5$ that $J \in\mathcal{W}_{2}^{\left(2\rho+2\ga,2\gamma\right)}$. We omit further details, as the arguments follows directly along the same lines as in previous computations in the proof of claim 1.

Summarizing our analysis so far, we have now proved both Claim 1 and Claim 2 above. Therefore we obtain that $(\phi,\phi^{\TRone},\phi^{\TRtwo},\phi^{\TRone\,\TRone}) $ is an element of $\mathcal{D}^{(\alpha,\gamma)}_{\bfz}$.
\smallskip

\noindent
\emph{Step 2: Proof of relation \eqref{bound for composition}.} 
According to the definition \eqref{controlled norm} for the norm in $\mathcal{D}^{(\alpha,\gamma)}_{\bfz}$, we have 
\begin{equation}\label{358}
\|(\phi,\phi^{\TRone},\phi^{\TRtwo},\phi^{\TRone\,\TRone})\|_{\bfz;(\alpha,\gamma)}=\|\phi^{\TRtwo}\|_{(\alpha,\gamma)}+\|\phi^{\TRone\,\TRone}\|_{(\alpha,\gamma)}+\|R^{\phi}\|_{(3\rho+3\gamma,3\gamma)}+\|R^{\phi^{\TRone}}\|_{(2\rho+2\gamma,2\gamma)}.
\end{equation}
In the following, we will bound four terms in the right hand side of \eqref{358} separately. 

We begin to handle the term $\|\phi^{\TRtwo}\|_{(\alpha,\gamma)}$ in \eqref{358}. We recall that $\phi^{\TRtwo}$ is given by \eqref{2_{nd} derivative}, and its $(\alpha,\gamma)$-norm is introduced in Definition \ref{new Volterra holder}. According to this definition, it is thus enough to bound $\|\phi^{\TRtwo}\|_{(\alpha,\gamma),1}$ and $\|\phi^{\TRtwo}\|_{(\alpha,\gamma),1,2,3}$. Towards this aim, we write
\begin{flalign}\label{phi two 1}
\left|\phi^{\TRtwo,\tau,\tau,\tau}_{ts}\right|=&\left|y^{\TRtwo,\tau,\tau}_{t}f^{\prime}(y^{\tau}_{t})-y^{\TRtwo,\tau,\tau}_{s}f^{\prime}(y^{\tau}_{s})\right|=\left|y^{\TRtwo,\tau,\tau}_{t}\left(f^{\prime}(y^{\tau}_{t})-f^{\prime}(y^{\tau}_{s})\right)+y^{\TRtwo,\tau,\tau}_{ts}f^{\prime}(y^{\tau}_{s})\right|\notag
\\ \lesssim & \|f\|_{C^{2}_{b}}(|y^{\TRtwo}_0|+\|y\|_{(\alpha,\gamma),1}+\|y^{\TRtwo}\|_{(\alpha,\gamma),1,2})\left(\left[\left|\tau-t\right|^{-\gamma}\left|t-s\right|^{\alpha}\right]\wedge\left|\tau-s\right|^{\rho}\right).
\end{flalign}
This yields  
\begin{equation}\label{phi two 1-norm}
\|\phi^{\TRtwo}\|_{(\alpha,\gamma),1} \lesssim\|f\|_{C^{2}_{b}}(|y^{\TRtwo}_0|+\|y\|_{(\alpha,\gamma),1}+\|y^{\TRtwo}\|_{(\alpha,\gamma),1,2}).
\end{equation}
We now wish to handle the norm $\|\phi^{\TRtwo}\|_{(\alpha,\gamma),1,2,3}$ in \eqref{W_{3} norm}. Otherwise stated, we wish to bound the terms in the right hand side of \eqref{W_{3} (1,2,3)-norm} for $\phi^{\TRtwo}$. For the term $\|\phi^{\TRtwo}\|_{(\alpha,\gamma),1,2,>}$, we thus write
\begin{flalign*}
&\left|\phi^{\TRtwo,p^{\prime},p_{2},p}_{ts}-\phi^{\TRtwo,p^{\prime},p_{1},p}_{ts}\right|=\left|y^{\TRtwo,p_{2},p}_{t}f^{\prime}(y^{p^{\prime}}_{t})-y^{\TRtwo,p_{2},p}_{s}f^{\prime}(y^{p^{\prime}}_{s})-y^{\TRtwo,p_{1},p}_{t}f^{\prime}(y^{p^{\prime}}_{t})+y^{\TRtwo,p_{1},p}_{s}f^{\prime}(y^{p^{\prime}}_{s})\right|
\\&\leq\left|\left(y^{\TRtwo,p_{2},p}_{ts}-y^{\TRtwo,p_{1},p}_{ts}\right)f^{\prime}(y^{p^{\prime}}_{t})\right|+\left|\left(y^{\TRtwo,p_{2},p}_{s}-y^{\TRtwo,p_{1},p}_{s}\right)\left(f^{\prime}(y^{p^{\prime}}_{t})-f^{\prime}(y^{p^{\prime}}_{s})\right)\right|.
\end{flalign*}
In addition, owing to Remark \ref{space of y2} and \eqref{3h extension} and since $\mathbf{y}\in\hat{D}^{(\al,\ga)}_{\bfz}$, we have $y^{\TRtwo}\in\mathcal{W}^{(\al,\ga)}_{2}$. Due to the fact that $y$ is also an element of $\mathcal{V}^{(\alpha,\gamma)}$ according to Remark \ref{space of y}, for any $\eta\in[0,1]$ and $\zeta<\rho$ we get
\begin{multline}
\left|\phi^{\TRtwo,p^{\prime},p_{2},p}_{ts}-\phi^{\TRtwo,p^{\prime},p_{1},p}_{ts}\right|\lesssim \|f\|_{C^{2}_{b}}(|y^{\TRtwo}_0|+\|y\|_{(\alpha,\gamma),1}+\|y^{\TRtwo}\|_{(\alpha,\gamma),1,2})\\ \times \left|p_{2}-p_{1}\right|^{\eta}\left|p-t\right|^{-\eta+\zeta}\left(\left[\left|p-t\right|^{-\gamma-\zeta}\left|t-s\right|^{\alpha}\right]\wedge\left|p-s\right|^{\rho-\zeta}\right),
\end{multline}
and thus
\begin{equation}\label{phi two (1,2)-norm}
\|\phi^{\TRtwo}\|_{(\alpha,\gamma),1,2} \lesssim\|f\|_{C^{2}_{b}}(|y^{\TRtwo}_0|+\|y\|_{(\alpha,\gamma),1}+\|y^{\TRtwo}\|_{(\alpha,\gamma),1,2}).
\end{equation}
Moreover, it is easily seen that $\|\phi^{\TRtwo}\|_{(\alpha,\gamma),1,3}$ and $\|\phi^{\TRtwo}\|_{(\alpha,\gamma),2,3}$ are bounded exactly in the same way as \eqref{phi two (1,2)-norm}. Hence we get the following bound for $\|\phi^{\TRtwo}\|_{(\alpha,\gamma),1,2,3}$:
\begin{equation}\label{phi two (1,2,3)-norm}
\|\phi^{\TRtwo}\|_{(\alpha,\gamma),1,2,3} \lesssim\|f\|_{C^{2}_{b}}(|y^{\TRtwo}_0|+\|y\|_{(\alpha,\gamma),1}+\|y^{\TRtwo}\|_{(\alpha,\gamma),1,2}).
\end{equation}
Eventually, plugging \eqref{phi two 1-norm} and \eqref{phi two (1,2,3)-norm} into \eqref{W_{3} norm}, we obtain the desired bound for $\|\phi^{\TRtwo}\|_{(\alpha,\gamma)}$:
\begin{equation}\label{c2}
\|\phi^{\TRtwo}\|_{(\alpha,\gamma)} \lesssim \|f\|_{C^{2}_{b}}(|y^{\TRtwo}_0|+\|y\|_{(\alpha,\gamma),1}+\|y^{\TRtwo}\|_{(\alpha,\gamma),1,2}).
\end{equation}
We let the reader check that the term $\|\phi^{\TRone\,\TRone}\|_{(\alpha,\gamma)}$ in \eqref{358} can be treated in a similar way. Indeed, $\phi^{\TRone\,\TRone}$ has to be considered as a process in $\mathcal{W}_{3}$, exactly like $\phi^{\TRtwo}$. Therefore owing to the definition \eqref{2_{nd} derivative} of $\phi^{\TRone\,\TRone}$ and to the definition \eqref{W_{3} (1,2,3)-norm} of the  $(1,2,3)$-norm in $\mathcal{W}_{3}$,  we get the following bound along the same lines as \eqref{phi two 1}-\eqref{c2}:
\begin{equation}\label{c2(1)}
\|\phi^{\TRone\,\TRone}\|_{(\alpha,\gamma)} \lesssim \|f\|_{C^{2}_{b}}(|y^{\TRtwo}_0|+\|y\|_{(\alpha,\gamma),1}+\|y^{\TRtwo}\|_{(\alpha,\gamma),1,2}).
\end{equation}
We are now ready to bound the fourth term $\|R^{\phi^{\TRone}}\|_{(2\rho+2\gamma,2\gamma)}$ in the right hand side of \eqref{358}. To this aim, recall that according to \eqref{g} we have
\[
R^{\TRone,\tau,p}_{ts}=y^{\TRone,\tau,p}_{ts}-\bfz^{\TRone,\tau}_{ts}\ast\left(y^{\TRtwo,\tau,p,\cdot}_{s}+2y^{\TRone\,\TRone,\tau,p,\cdot}_{s}\right).
\]
Comparing this expression to \eqref{definition of J}, we get $R^{\phi^{\TRone}}=J$. Now recall that $J$ has been analyzed through a decomposition in \eqref{356(5)}-\eqref{357(1)}. 
Note that all the terms appearing in the decomposition are directly bounded due to the fact that $(y,y^{\TRone},y^{\TRtwo},0)\in \hat{\cd}^{(\alpha,\gamma)}_{\bz}$. It is therefore readily checked that 
\begin{multline*}
\|R^{\phi^{\TRone}}\|_{(2\rho+2\gamma,2\gamma)} \leq C (1+\vertiii{\bfz}_{(\alpha,\gamma)})^{3}  \Big[\left(|y^{\TRone}_{0}|+|y^{\TRtwo}_{0}|+\|(y,y^{\TRone},y^{\TRtwo},0)\|_{\bfz,(\alpha,\gamma)}\right)\notag
\\ \vee \left(|y^{\TRone}_{0}|+|y^{\TRtwo}_{0}|+\|(y,y^{\TRone},y^{\TRtwo},0)\|_{\bfz,(\alpha,\gamma)}\right)^{3}\Big].
\end{multline*}
Eventually, we handle the term $\|R^{\phi}\|_{(3\rho+3\gamma,3\gamma)}$ in~\eqref{358}. Recall that $R^{\phi}$ is given by \eqref{def of R}, and that we have already bounded the term $r$ and $\tilde{r}$ in \eqref{ëq:bound first r} and \eqref{eq:r tilde bound} respectively. Furthermore, it follows directly that $\|R^y\|_{(3\rho+3\gamma,3\gamma)}\leq \|(y,y^{\TRone},y^{\TRtwo},0)\|_{\bz,(\alpha,\gamma)}$. 
Combining the above considerations, we see that 
\begin{multline*}
    \|R^\phi\|_{(3\rho+3\gamma,3\gamma)}\lesssim (1+\vertiii{\bz}_{(\alpha,\gamma)})^3
   \Big[\left(|y_0|+|y^{\TRone}_{0}|+|y^{\TRtwo}_{0}|+\|(y,y^{\TRone},y^{\TRtwo},0)\|_{\bfz,(\alpha,\gamma)}\right)
   \\
 \vee \left(|y_0|+|y^{\TRone}_{0}|+|y^{\TRtwo}_{0}|+\|(y,y^{\TRone},y^{\TRtwo},0)\|_{\bfz,(\alpha,\gamma)}\right)^{3}\Big].
\end{multline*}

 Gathering the bounds found above, it is now evident that 
\begin{flalign*}
\|(\phi,\phi^{\TRone},\phi^{\TRtwo},\phi^{\TRone\,\TRone})\|_{\bfz;(\alpha,\gamma)}\lesssim  \left(1+\vertiii{\bfz}_{(\alpha,\gamma)}\right)^{3}&\bigg[\left(|y_0|+\left|y^{\TRone}_{0}\right|+|y^{\TRtwo}_{0}|+\|(y,y^{\TRone},y^{\TRtwo},0)\|_{\bfz,(\alpha,\gamma)}\right)
\\
&\vee \left(|y_0|+\left|y^{\TRone}_{0}\right|+|y^{\TRtwo}_{0}|+\|(y,y^{\TRone},y^{\TRtwo},0)\|_{\bfz,(\alpha,\gamma)}\right)^{3}\bigg],
\end{flalign*}
where the hidden constant depends on $\|f\|_{C^{4}_{b}}$, $\alpha$, and $\gamma$. The above relation is exactly \eqref{bound for composition}, which concludes our proof. 
\end{proof}
\begin{remark}\label{remark for composition norm}
In Proposition \ref{controlled path for composition function} we have obtained useful  bounds on the composition map from  $\hat{D}^{(\al,\ga)}_{\bfz}(\Delta_{2}([0,T]);\mathbb{R}^{d})$ to $D^{(\al,\ga)}_{\bfz}(\Delta_{2}([0,T]);\mathbb{R}^{m})$. Let us now choose a parameter $\beta$ such that $\beta<\alpha$ and we still have $\beta-\gamma>\frac{1}{4}$. We will in the next section onsider the composition map from $\hat{\cd}^{(\beta,\ga)}_{\bfz}(\Delta_{2}([0,T]);\mathbb{R}^{d})$ to $\cd^{(\beta,\ga)}_{\bfz}(\Delta_{2}([0,T]);\mathbb{R}^{m})$. Due to Remark \ref{rem:holdeer embedding}, it is readily checked that there exists a constant $C=C_{M,\alpha,\beta,\gamma,\|f\|_{C_{b}^{5}}}$ such that,
\begin{multline}\label{remark for composition norm ineq}
\|(\phi,\phi^{\TRone},\phi^{\TRtwo},\phi^{\TRone\,\TRone})\|_{\bfz;(\beta,\gamma)}
\leq 
C\left(1+\|\bfz\|_{(\alpha,\gamma)}\right)^{3}
\left
(\left[\left|y^{\TRone}_{0}\right|+|y^{\TRtwo}_{0}|+\|(y,y^{\TRone},y^{\TRtwo},0)\|_{\bfz,(\beta,\gamma)}\right]\right)
\\
\vee \left(\left[\left|y^{\TRone}_{0}\right|+|y^{\TRtwo}_{0}|+\|(y,y^{\TRone},y^{\TRtwo},0)\|_{\bfz,(\beta,\gamma)}\right]^{3}\right)\,T^{\al-\beta}.
\end{multline}
\end{remark}

We close this section by presenting a technical result which leads to some useful cancellations in the rough path expansion \eqref{356(4)}.
\begin{lemma}\label{equation for claim2}
Let $f\in C^{4}_{b}(\RR^{d})$ and assume $(y, y^{\TRone},y^{\TRtwo},0)\in \hat{ \mathcal{D}}_{\mathbf{z}}^{(\alpha, \gamma)}(\RR^d)$ as given in Remark~\ref{rek 3}. Also recall our Notation \ref{production notation} for matrix products. Then for any $(s,t,\tau)\in \Delta_{3}$, we have 
\begin{equation}\label{lemma323}
y^{\TRone,\tau}_{s}\bfz^{\TRone,\tau}_{ts}\ast y^{\TRone,\cdot}_{s}f^{\prime\prime}(y^{\tau}_{s})=\bfz^{\TRone,\tau}_{ts}\ast y^{\TRone,\cdot}_{s}\otimes y^{\TRone,\tau}_{s}f^{\prime\prime}(y^{\tau}_{s})
\end{equation}
\end{lemma}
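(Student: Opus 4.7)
The identity is purely algebraic: both sides contain the quantities $y^{\TRone,\tau}_{s}$ and $f^{\prime\prime}(y^{\tau}_{s})$, which do \emph{not} depend on the convolution integration variable, and the only ``moving'' piece is the convolution of $\bfz^{\TRone,\tau}_{ts}$ against $y^{\TRone,\cdot}_{s}$. The plan is therefore to expand each convolution as a Riemann sum via Proposition \ref{one step conv}, pull all constants through the sum, and then invoke symmetry of $f^{\prime\prime}$.

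First I would rewrite the convolution using its defining limit
$$\bfz^{\TRone,\tau}_{ts}\ast y^{\TRone,\cdot}_{s}
= \lim_{|\mathcal{P}|\to 0}\sum_{[u,v]\in \mathcal{P}}\bfz^{\TRone,\tau}_{vu}\,y^{\TRone,u}_{s}.$$
Since $y^{\TRone,\tau}_{s}$ and $f^{\prime\prime}(y^{\tau}_{s})$ are constants with respect to the partition $\mathcal{P}$, the left-hand side of \eqref{lemma323} becomes
$$\lim_{|\mathcal{P}|\to 0}\sum_{[u,v]\in \mathcal{P}} y^{\TRone,\tau}_{s}\,\bfz^{\TRone,\tau}_{vu}\,y^{\TRone,u}_{s}\,f^{\prime\prime}(y^{\tau}_{s}).$$
For the right-hand side, the same argument applies, but now $y^{\TRone,\tau}_{s}$ is moved \emph{inside} the tensor factor that is being convolved against $\bfz^{\TRone,\tau}$; since $y^{\TRone,\tau}_{s}$ is constant in the integration variable $r$, the tensor $y^{\TRone,r}_{s}\otimes y^{\TRone,\tau}_{s}$ factors through the convolution and the right-hand side of \eqref{lemma323} becomes
$$\lim_{|\mathcal{P}|\to 0}\sum_{[u,v]\in \mathcal{P}} \bfz^{\TRone,\tau}_{vu}\,\bigl(y^{\TRone,u}_{s}\otimes y^{\TRone,\tau}_{s}\bigr)\,f^{\prime\prime}(y^{\tau}_{s}).$$

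To conclude, I would compare the two Riemann sums term by term. Because $f\in C^{4}_{b}$, the Hessian $f^{\prime\prime}(y^{\tau}_{s})$ is a symmetric bilinear form on $\mathbb{R}^{d}$, so contracting with $u\otimes v$ or with $v\otimes u$ gives the same scalar. Unwinding the product-order convention of Notation \ref{production notation}, this symmetry turns the contraction $\bfz^{\TRone,\tau}_{vu}(y^{\TRone,u}_{s}\otimes y^{\TRone,\tau}_{s})f^{\prime\prime}(y^{\tau}_{s})$ into $y^{\TRone,\tau}_{s}\bfz^{\TRone,\tau}_{vu}y^{\TRone,u}_{s}f^{\prime\prime}(y^{\tau}_{s})$, which is exactly the summand appearing in the reformulation of the left-hand side. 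Passing to the limit, the identity \eqref{lemma323} follows.

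The only real obstacle is bookkeeping: one must parse Notation \ref{production notation} carefully to see that the ``apparent'' reordering $y^{\TRone,\tau}_{s}(\,\cdot\,)\mapsto (\,\cdot\,)\otimes y^{\TRone,\tau}_{s}$ is consistent once $f^{\prime\prime}$ is interpreted as a symmetric bilinear form. No analytic estimates or sewing arguments are needed, so I do not foresee any genuine technical difficulty beyond this indexing verification.
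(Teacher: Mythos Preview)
Your approach is essentially the same as the paper's: both reduce the identity to a term-by-term (or coordinate-by-coordinate) check after unwinding the product convention of Notation~\ref{production notation}. The paper does this by pairing both sides with a test vector $a\in\mathbb{R}^{d}$ and expanding in indices, while you expand the convolution as a Riemann sum first; either way the identity becomes a comparison of scalar products in different orders.

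One small point worth noting: you attribute the final equality to symmetry of the Hessian $f''(y^{\tau}_{s})$, but in fact the paper's coordinate computation shows that symmetry is not needed. Once Notation~\ref{production notation} is unwound, both sides produce the \emph{same} index pattern
\[
\sum_{i,j,i_{1},j_{1}} f''(y^{\tau}_{s})^{ij}\, y^{\TRone,\cdot,ii_{1}}_{s}\,\bfz^{\TRone,\tau,i_{1}}_{ts}\, y^{\TRone,\tau,jj_{1}}_{s}\, a^{j_{1}},
\]
and the equality follows simply from commutativity of multiplication in $\mathbb{R}$. Your proof still goes through, but the symmetry hypothesis is superfluous.
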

\begin{proof}
Let $L$ (respectively M) be the left hand side (respectively right hand side) of \eqref{lemma323} . Recalling the dimension considerations after equation \eqref{2nd relation (1)}, notice that both $L$ and $M$ are elements of  $\mathbb{R}^{d\times d}$. For $a\in \mathbb{R}^{d}$, we consider the matrix products $a L$ and $a M$ in the sense of Notation \ref{production notation}. In particular, our Notation \ref{production notation} implies that $a L$ has to be interpreted as $f^{\prime\prime}(y^{\tau}_{s}) y^{\TRone,\cdot}_{s}\ast \bfz^{\TRone,\tau}_{ts} y^{\TRone,\tau}_{s} a$. Expressing this in coordinates we get
\begin{eqnarray}\label{aL}
a L&=&
\sum_{i,j=1}^{m}f^{\prime\prime}(y^{\tau}_{s})^{ij}\sum_{i_{1}=1}^{m}y^{\TRone,\cdot,ii_{1}}_{s}\bfz^{\TRone,\tau,i_{1}}_{ts}\sum_{j_{1}=1}^{m}y^{\TRone,\tau,jj_{1}}_{s}a^{j_{1}} \notag \\
&=&
\sum_{i,j,i_{1},j_{1}=1}^{m}f^{\prime\prime}(y^{\tau}_{s})^{ij}y^{\TRone,\cdot,ii_{1}}_{s}
\bfz^{\TRone,\tau,i_{1}}_{ts}y^{\TRone,\tau,jj_{1}}_{s}a^{j_{1}}.
\end{eqnarray}
Similarly, the product $a M$ can be expressed as 
\begin{flalign}\label{aM}
a M=f^{\prime\prime}(y^{\tau}_{s})^{ij}y^{\TRone,\tau}_{s}\otimes y^{\TRone,\tau}_{s}\ast \bfz^{\TRone,\cdot}_{ts}\cdot a=\sum_{i,j,i_{1},j_{1}=1}^{m}f^{\prime\prime}(y^{\tau}_{s})^{ij}y^{\TRone,\cdot,ii_{1}}_{s}y^{\TRone,\tau,jj_{1}}_{s}\bfz^{\TRone,\tau,i_{1}}_{ts}a^{j_{1}}.
\end{flalign}
Comparing \eqref{aL} and \eqref{aM}, it is clear that $a L=a M$ for any $a\in R^{d}$. Thus $L=M$, which finishes the proof.
\end{proof}

\subsection{Rough Volterra Equations}\label{sec:volterra-eq}
In this section we gather all the element of stochastic calculus put forward in Sections \ref{Third order convolution products}-\ref{Volterra controlled processes and rough Volterra integration}, in order to achieve one of main goals in this paper. Namely we will solve Volterra type equations in a very rough setting.\\
 We start by introducing a new piece of notation.
 \begin{notation}
 Let us define a new space $\mathcal{D}_{\mathbf{z};\mathbf{y}_{0}}^{\left(\beta,\gamma\right)}\left(\Delta_{2}^{T}\left(\left[0,\bar{T}\right]\right);\mathbb{R}^{d}\right) $, where $\mathbf{y}_{0}$ is of the form $(y_{0},y^{\TRone}_{0},y^{\TRtwo}_{0},y^{\TRone\,\TRone}_{0})$. For $0\leq a<b\leq T$ we define a simplex type set $\Delta_2^{T}([a,b])$ as follows,
\begin{equation}
\Delta_{2}^{T}\left(\left[a,b\right]\right)=\left\{ \left(s,\tau\right)\in\left[a,b\right]\times\left[0,T\right]\big|\,a\leq s\leq\tau\leq T\right\}.
\end{equation}
Note that the first component of $(s,\tau)\in \Delta^T_2([a,b])$
is restricted to $\left[a,b\right]$ while the second component is allowed to vary in the whole interval $\left[0,T\right]$. Without loss of generality, we assume that $\|\bfz\|_{(\alpha,\gamma)}\leq M\in \mathbb{R}_{+}$. As in Remark \ref{remark for composition norm}, we choose  a parameter $\beta<\alpha$ but still satisfying $\beta-\gamma>1/4$. Let us also consider a time horizon $\bar{T}\leq T$ (this $\bar{T}$ will be made small enough to perform a contraction argument later on). We will work on a space $\mathcal{D}_{\mathbf{z},\mathbf{y}_{0}}^{(\beta,\gamma)}(\Delta_{2}^{T}([0,\bar{T}]);\mathbb{R}^{m})$ defined by
\begin{multline}\label{define of new D set}
\mathcal{D}_{\mathbf{z},\mathbf{y}_{0}}^{(\beta,\gamma)}(\Delta_{2}^{T}([0,\bar{T}]);\mathbb{R}^{m}) = \Big\{ \left(y,y^{\TRone},y^{\TRtwo},y^{\TRone\,\TRone}\right)\in\mathcal{D}_{\mathbf{z}}^{\left(\beta,\gamma\right)}\left(\Delta_{2}^{T}\left(\left[0,\bar{T}\right]\right);\mathbb{R}^{m}\right)\Big| 
\\ \mathbf{y}_{0}=\{y^{\tau}_{0},y^{\TRone,\tau}_{0},y^{\TRtwo,\tau,\tau}_{0},y^{\TRone\,\TRone,\tau,\tau}_{0}\}=\{y_{0},y^{\TRone}_{0},y^{\TRtwo}_{0},y^{\TRone\,\TRone}_{0}\} \Big\}.
\end{multline}
Notice that the norm on $D^{(\beta,\gamma)}_{\bfz,\mathbf{y}_{0}}$ is still defined by \eqref{controlled norm}. The only difference between $D^{(\beta,\gamma)}_{\bfz,\mathbf{y}_{0}}$ and $D^{(\beta,\gamma)}_{\bfz}$ in Definition \ref{controlled processes} is that $D^{(\beta,\gamma)}_{\bfz,\mathbf{y}_{0}}$ has an affine space structure, in contrast with the Banach space nature of $D^{(\beta,\gamma)}_{\bfz}$.
\end{notation}
We are now ready to solve Volterra type equations in the rough case $\alpha-\gamma>\frac{1}{4}$.
\begin{thm}\label{volterra eq thm}
Consider a path $x\in \cc^\alpha([0,T];\RR^d)$, and let  $k:\Delta_2\rightarrow \RR$ be a Volterra kernel of order $\gamma$, with $\alpha-\gamma>\frac{1}{4}$. Define  $z\in\mathcal{V}^{\left(\alpha,\gamma\right)}(\Delta_2;\RR^d)$  by $z_t^\tau=\int_0^t k(\tau,r)dx_r$
and assume there exists a tree indexed Volterra rough path $\mathbf{z}=\{\bfz^{\sigma,\tau};\sigma\in \ct_{3}\}$ above $z$ satisfying Hypothesis \ref{hyp 3}. Additionally,  suppose $f\in\mathcal{C}_{b}^{5}(\RR^{m}; \mathcal{L}(\RR^{d};\RR^m))$. 
Then there exists a unique solution in $\mathcal{D}_{\bfz}^{\left(\alpha,\gamma\right)}\left(\mathbb{R}^{m}\right)$ to the Volterra equation 
\begin{equation}\label{volterra eq in thm}
y_{t}^{\tau}=y_{0}+\int_{0}^{t}k\left(\tau,r\right)dx_{r}f\left(y_{r}^{r}\right),\,\,\,\left(t,\tau\right)\in\Delta_2\left(\left[0,T\right]\right),\,\,\,\,y_{0}\in \RR^{m},
\end{equation}
where the integral is understood as a rough Volterra integral according to Theorem \ref{three step conv}.
\end{thm}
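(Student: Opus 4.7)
The plan is a standard Picard/fixed-point argument, but carried out in the tree-indexed controlled-Volterra space introduced in Definition \ref{controlled processes}. Fix a radius $R$ and a time $\bar T\leq T$ (both to be chosen) and define the solution map
\[
\mathcal{M}\colon \hat{\mathcal{D}}^{(\beta,\gamma)}_{\bfz,\mathbf{y}_0}(\Delta_2^T([0,\bar T]);\RR^m)\;\longrightarrow\;\hat{\mathcal{D}}^{(\beta,\gamma)}_{\bfz,\mathbf{y}_0}(\Delta_2^T([0,\bar T]);\RR^m),
\]
by first composing $\mathbf{y}=(y,y^{\TRone},y^{\TRtwo},0)$ with $f$ via Proposition \ref{controlled path for composition function} to obtain a controlled path $(\phi,\phi^{\TRone},\phi^{\TRtwo},\phi^{\TRone\,\TRone})\in\mathcal{D}^{(\beta,\gamma)}_{\bfz}(\mathcal{L}(\RR^d;\RR^m))$, and then integrating against $\mathbf{z}$ via Theorem \ref{three step conv} to produce $\mathcal{M}(\mathbf{y})=\mathbf{w}=(w,w^{\TRone},w^{\TRtwo},0)$, where $w_t^\tau=y_0+\int_0^t k(\tau,r)dx_rf(y_r^r)$ and the Gubinelli derivatives are $w^{\TRone,\tau,p}=\phi^p=f(y^p)$ and $w^{\TRtwo,\tau,q,p}=\phi^{\TRone,q,p}=y^{\TRone,p}f'(y^q)$. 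Part \ref{three step conv d} of Theorem \ref{three step conv} ensures $w^{\TRone\,\TRone}=0$, so $\mathcal{M}$ does map $\hat{\mathcal{D}}$ to itself, and a fixed point of $\mathcal{M}$ is exactly a solution to~\eqref{volterra eq in thm}.

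\smallskip
\noindent\emph{Invariance of a ball.} Combining Remark \ref{remark for composition norm} with the integration estimates \eqref{Rw bound}–\eqref{Rw dot bound} of Remark \ref{remark for Rw and Rw dot} and the trivial estimates on $w^\TRone$, $w^\TRtwo$ inherited from $\phi$ and $y^\TRone$, one obtains a bound of the form
\[
\|\mathcal{M}(\mathbf{y})\|_{\bfz,(\beta,\gamma)}\;\leq\; C_{M,\|f\|_{\mathcal{C}^5_b}}\,\bar T^{\,\alpha-\beta}\,
\big(1+\|(y,y^{\TRone},y^{\TRtwo},0)\|_{\bfz,(\beta,\gamma)}\big)^{3}\,\vertiii{\bfz}_{(\alpha,\gamma)}.
\]
Choosing $R$ so that the ball $B_R\subset \hat{\mathcal{D}}^{(\beta,\gamma)}_{\bfz,\mathbf{y}_0}$ contains the constant path $\mathbf{y}_0$ with room to spare, and then choosing $\bar T$ small enough that $C_{M,f}\,\bar T^{\alpha-\beta}(1+R)^{3}\vertiii{\bfz}\leq R/2$, one sees that $\mathcal{M}(B_R)\subset B_R$. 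The use of the strict inequality $\beta<\alpha$ (so that the factor $\bar T^{\alpha-\beta}$ is genuinely small) is crucial here and is the reason for moving to the weaker Hölder exponent $\beta$.

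\smallskip
\noindent\emph{Contraction.} For $\mathbf{y}^{(1)},\mathbf{y}^{(2)}\in B_R$, linearity of both the composition expansion in Proposition \ref{controlled path for composition function} (modulo remainders) and the integration map in Theorem \ref{three step conv} allows one to write $\mathcal{M}(\mathbf{y}^{(1)})-\mathcal{M}(\mathbf{y}^{(2)})$ as the rough Volterra integral of the controlled path obtained by expanding $f(y^{(1)})-f(y^{(2)})$ via a first-order Taylor expansion. Applying the composition estimate of Proposition \ref{controlled path for composition function} to the family of smooth functions $f^{(k)}$ appearing in this expansion, together with the local Lipschitz dependence of composition in the Volterra norm, yields
\[
\|\mathcal{M}(\mathbf{y}^{(1)})-\mathcal{M}(\mathbf{y}^{(2)})\|_{\bfz,(\beta,\gamma)}\;\leq\; \tilde C_{M,f,R}\,\bar T^{\,\alpha-\beta}\,\vertiii{\bfz}_{(\alpha,\gamma)}\,\|\mathbf{y}^{(1)}-\mathbf{y}^{(2)}\|_{\bfz,(\beta,\gamma)}.
\]
A further reduction of $\bar T$ makes the constant strictly less than $1$, so that $\mathcal{M}$ is a contraction on $B_R$. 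Banach's fixed point theorem then yields a unique solution on $[0,\bar T]$.

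\smallskip
\noindent\emph{Global solution and uniqueness.} The length $\bar T$ of the existence interval depends only on $M$, $R$, $\|f\|_{\mathcal{C}^5_b}$ and the norm $\vertiii{\bfz}_{(\alpha,\gamma)}$, and not on the specific initial condition $\mathbf{y}_0$ beyond an a priori bound. One therefore restarts the equation at $t=\bar T$ with initial data $(y_{\bar T}^{\cdot},y^{\TRone,\cdot,\cdot}_{\bar T},y^{\TRtwo,\cdot,\cdot,\cdot}_{\bar T},0)$ (taking care that $\tau\in[0,T]$ is allowed to range all the way up to $T$, which is exactly the reason for working on $\Delta_2^T([0,\bar T])$) and iterates the procedure finitely many times to cover $[0,T]$. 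Uniqueness in $\mathcal{D}^{(\alpha,\gamma)}_{\bfz}$ follows by first embedding any candidate solution into $\mathcal{D}^{(\beta,\gamma)}_{\bfz}$ via Remark \ref{rem:holdeer embedding} and invoking the local contraction on each subinterval. The main technical obstacle is the cubic dependence on $\|\mathbf{y}\|_{\bfz,(\beta,\gamma)}$ in \eqref{bound for composition}, which forces one to work in the strictly weaker space $\mathcal{V}^{(\beta,\gamma)}$ with $\beta<\alpha$ in order to extract a small factor $\bar T^{\alpha-\beta}$ that absorbs the polynomial growth; handling the reconstruction of the remainders $R^{\phi}$ and $R^{\phi^{\TRone}}$ in that weaker topology is where the bulk of the delicate estimates lies.
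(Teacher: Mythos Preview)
Your proposal is correct and follows essentially the same approach as the paper: a Picard fixed-point argument in $\hat{\mathcal{D}}^{(\beta,\gamma)}_{\bfz,\mathbf{y}_0}(\Delta_2^T([0,\bar T]);\RR^m)$ with $\beta<\alpha$, using Proposition~\ref{controlled path for composition function} for composition, Theorem~\ref{three step conv} for integration, and the small factor $\bar T^{\alpha-\beta}$ to obtain both invariance and contraction, followed by iteration to cover $[0,T]$. The only cosmetic difference is that the paper works with the unit ball rather than a generic radius $R$, and it writes out the controlled-path decomposition of $\mathbf{F}=f(y)-f(\tilde y)$ explicitly in the contraction step rather than phrasing it as a Taylor expansion; the substance is identical.
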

\begin{proof}
We will proceed in a classical way by (i) Establishing a fixed point argument on a small interval. (ii) Patching the solutions obtained on the small intervals. Since this procedure is standard, we will skip some details.

We wish to solve \eqref{volterra eq in thm} in a class of controlled processes. This means that the right hand side of \eqref{volterra eq in thm} has to be understood according to Theorem \ref{three step conv}. In particular referring to Theorem~\ref{three step conv} \ref{three step conv d}, the controlled process $\mathbf{y}$ will be of the form $\mathbf{y}=\{y,y^{\TRone},y^{\TRtwo},0\}$. In the remainder of the proof, we will consider a controlled path $\mathbf{y}\in\mathcal{D}_{\mathbf{z};\mathbf{y}_{0}}^{\left(\beta,\gamma\right)}\left(\Delta_{2}^{T}\left(\left[0,\bar{T}\right]\right);\mathbb{R}^{m}\right) $ as given in~\eqref{define of new D set}, that is a controlled processes $\mathbf{y}$ starting from an initial value $\mathbf{y}_{0}=(y_0,f(y_0),f(y_{0})f^{\prime}(y_{0}),0)$. As in Remark \ref{remark for composition norm}, we consider a parameter $\beta$ such that 
\begin{equation}\label{relation beta and gamma}
\beta<\alpha,\,\,\, \text{and}\,\,\,\beta-\gamma>\frac{1}{4}. 
\end{equation}
In addition, we introduce a mapping
\begin{equation}
\mathcal{M}_{\bar{T}}:\mathcal{D}_{\mathbf{z},\mathbf{y}_{0}}^{\left(\beta,\gamma\right)}\left(\Delta_{2}^{T}\left(\left[0,\bar{T}\right]\right);\RR^{m}\right)\rightarrow\mathcal{D}_{\mathbf{z},\mathbf{y}_{0}}^{\left(\beta,\gamma\right)}\left(\Delta_{2}^{T}\left(\left[0,\bar{T}\right]\right);\RR^{m}\right),
\end{equation}
 such that for all $\left(y,y^{\TRone},y^{\TRtwo},0\right)\in\mathcal{D}_{\mathbf{z},\mathbf{y}_{0}}^{\left(\beta,\gamma\right)}\left(\RR^{m}\right)$, we have
\begin{multline}\label{mapping M}
\mathcal{M}_{\bar{T}}\left(y,y^{\TRone},y^{\TRtwo},0\right)^{\tau}_{t}
\\=\left\{ \lp y_{0}+\int_{0}^{t}k\left(\tau,r\right)dx_{r}f\left(y_{r}^{r}\right), \, f(y_{t}^{\tau}),f\left(y_{t}^{\tau})\right)f^{\prime}(y^{\tau}_{t}),0 \rp
\Big| \left(t,\tau\right)\in\Delta_{2}^{T}\left(\left[0,\bar{T}\right]\right)\right\}.
\end{multline}
We are now ready to implement the first piece (i) of the general strategy described above.
\smallskip

\noindent
\emph{Step 1: Invariant ball on a small interval.} In this step, our goal is to show that there exists a ball of radius $1$ in $\mathcal{D}_{\mathbf{z},\mathbf{y}_{0}}^{(\beta,\gamma)}(\Delta_{2}^{T}([0,\bar{T}]);\mathbb{R}^{m})$ which is left invariant by $\mathcal{M}_{\bar{T}}$ provided that $\bar{T}$ is small enough. To this aim, we introduce some additional notation. Namely for $\mathbf{y}$ as in \eqref{mapping M} we define a controlled process $\mathbf{w}$ in the following way:
\begin{equation}\label{req1}
\left(s,t,\tau\right)\mapsto\mathbf{w}^{\tau}_{ts}=\left(w_{ts}^{\tau},w_{ts}^{\TRone,\tau},w_{ts}^{\TRtwo,\tau},0\right)=\mathcal{M}_{\bar{T}}\left(y,y^{\TRone},y^{\TRtwo},0\right)_{ts}^{\tau},
\end{equation}
where we recall that $\mathcal{M}_{\bar{T}}$ is defined by \eqref{mapping M}.
Next consider the unit ball $\mathcal{B}_{\bar{T}} $ within the space $ \mathcal{D}_{\mathbf{z},\mathbf{y}_{0}}^{(\beta,\gamma)}(\Delta_{2}^{T}([0,\bar{T}]);\mathbb{R}^{m})$, defined by 
\begin{equation} \label{ib}
\mathcal{B}_{\bar{T}}= \left\{\left(y,y^{\TRone},y^{\TRtwo},0\right)\in\mathcal{D}_{\mathbf{z},\mathbf{y}_{0}}^{\left(\beta,\gamma\right)}\left(\Delta_{2}^{T}\left(\left[0,\bar{T}\right]\right);\mathbb{R}^{m}\right)\Big| \, \|(y,y^{\TRone},y^{\TRtwo},0)\|_{\mathbf{z},\left(\beta,\gamma\right)}\leq 1 \right\}.
\end{equation} 
In order to bound the process defined by \eqref{req1}, notice that $\mathcal{M}_{\bar{T}}$ is given as the Volterra type integral of $\phi=f(y)$. Hence according to \eqref{remark for composition norm ineq} there exists a constant $C$ such that 
\begin{equation}
\|(\phi,\phi^{\TRone},\phi^{\TRtwo},\phi^{\TRone\,\TRone})\|_{\bfz,(\beta,\gamma)}\lesssim \left(1+\|\bfz\|_{(\alpha,\gamma)}\right)^{3}\left(1+Q^{3}\right)\bar{T}^{\alpha-\beta},
\end{equation}
where we have set 
\begin{equation}\label{expression of Q}
Q=|f(y_0)|+|f(y_{0})f^{\prime}(y_{0})|+\|(y,y^{\TRone},y^{\TRtwo},0)\|_{\mathbf{z},(\beta,\gamma)} .
\end{equation}
In addition, our process $\mathbf{w}$ is defined in \eqref{req1} as 
\[
w^{\tau}_{ts}=\int_{s}^{t}k(\tau,r)dx_{r}\phi^{r}_{r}.
\]
Thus an easy extension of \eqref{h}-\eqref{Rw dot bound} to a process $\phi\in \mathcal{D}^{(\beta,\gamma)}_{\bfz}$ with $\beta$ satisfying \eqref{relation beta and gamma} yields
\begin{equation}\label{bound for w}
\|\mathbf{w}\|_{\bfz, (\beta,\gamma)}\leq 
C\|(\phi,\phi^{\TRone},\phi^{\TRtwo},\phi^{\TRone\,\TRone})\|_{\bfz,(\beta,\gamma)}
\|\bfz\|_{(\alpha,\gamma)}
\leq C \lp 1+\|\bfz\|_{(\alpha,\gamma)}\rp^{4}
\lp 1+Q^{3}\rp \, \bar{T}^{\alpha-\beta},
\end{equation}
for a universal constant which can change from line to line. Furthermore, since we have assumed that $\|\bfz\|_{(\alpha,\gamma)}\leq M$, one can recast \eqref{bound for w} as 
\begin{equation}\label{bound for w 2}
\|\mathbf{w}\|_{\bfz, (\beta,\gamma)}\leq 
C\left(1+M^{4}\right) \left(1+Q^{3}\right) \, \bar{T}^{\alpha-\beta}.
\end{equation}
Considering $\bar{T}\leq \left(C\left(1+M^{4}\right)(1+Q^{3})\right)^{\frac{1}{\alpha-\beta}}$ and back to our definition \eqref{req1}, it is now easily seen that $\mathcal{B}_{\bar{T}}$ in \eqref{ib} is left invariant by the map $\mathcal{M}_{\bar{T}}$. This completes the proof of step 1.

Next, we handle the second piece (ii) of the general strategy described above.
\smallskip

\noindent
\emph{Step 2: $\mathcal{M}_{\bar{T}}$ is contractive.}
The aim of this step is to prove that $\mathcal{M}_{\bar{T}}$ is a contraction mapping on $\mathcal{D}_{\mathbf{z},\mathbf{y}_{0}}^{(\alpha,\gamma)}(\Delta_{2}^{T}([0,\bar{T}]);\mathbb{R}^{m})$. That is, we will show that there exists a small $\hat{T}\leq\bar{T}$ and a constant $0<q<1$ such that for two paths $\mathbf{y}=(y,y^{\TRone},y^{\TRtwo},0)$ and $\mathbf{\tilde{y}}=(\tilde{y},\tilde{y}^{\TRone},\tilde{y}^{\TRtwo},0)$ in $\mathcal{D}_{\mathbf{z},\mathbf{y}_{0}}^{(\beta,\gamma)}(\Delta_{2}^{T}([0,\hat{T}]);\mathbb{R}^{m})$ we have 
\begin{equation}\label{relation My and y}
\left\|\mathcal{M}_{\bar{T}}\left(y-\tilde{y},y^{\TRone}-\tilde{y}^{\TRone},y^{\TRtwo}-\tilde{y}^{\TRtwo},0\right)\right\|_{\mathbf{z},\left(\beta,\gamma\right)}\leq q\left\| \left(y-\tilde{y},y^{\TRone}-\tilde{y}^{\TRone},y^{\TRtwo}-\tilde{y}^{\TRtwo},0\right)\right\|_{\mathbf{z},\left(\beta,\gamma\right)}.
\end{equation}
To this aim, we set $F=f(y)-f(\tilde{y})$, and consider the controlled path $\mathbf{F}=(F,F^{\TRone},F^{\TRtwo},F^{\TRone\,\TRone})\in\mathcal{D}_{\mathbf{z}}^{(\beta,\gamma)}(\Delta_{2}^{T}([0,\hat{T}]);\mathbb{R}^{m})$ defined through Proposition \ref{controlled path for composition function}. According to expression \eqref{mapping M}, we have 
\begin{multline}\label{eq:MbarT map}
\mathcal{M}_{\hat{T}}\left(y-\tilde{y},y^{\TRone}-\tilde{y}^{\TRone},y^{\TRtwo}-\tilde{y}^{\TRtwo},0\right)_{ts}^{\tau} 
\\=\left\{ \lp \int_{s}^{t}k\left(\tau,r\right)dx_{r}F^{r}_{r}, \, F_{st}^{\tau},\, F^{\TRone,\tau,\tau}_{ts},0 \rp
\Big| \left(s,t,\tau\right)\in\Delta_{3}^{T}\left([0,\hat{T}]\right)\right\}.
\end{multline}
Hence in order to prove \eqref{relation My and y}, it is sufficient to bound the right hand side of \eqref{eq:MbarT map}. Now similarly to Step 1, thanks to Remark \ref{remark for Rw and Rw dot} and upper bounds \eqref{h}-\eqref{i}, we obtain
\begin{equation}\label{req3}
\left\|\mathcal{M}_{\hat{T}}\left(y-\tilde{y},y^{\TRone}-\tilde{y}^{\TRone},y^{\TRtwo}-\tilde{y}^{\TRtwo},0\right)\right\|_{\mathbf{z}, \left(\beta,\gamma\right)}\leq C\|\bfz\|_{(\alpha,\gamma)}\left\|\left(F,F^{\TRone},F^{\TRtwo},F^{\TRone\,\TRone}\right)\right\|_{\bfz,(\beta,\gamma)}\hat{T}^{\alpha-\beta}.
\end{equation}
In the following, we will bound $\|(F,F^{\TRone},F^{\TRtwo},F^{\TRone\,\TRone})\|_{\bfz,(\beta,\gamma)}$, that is, we need to find a bound for $\|(F,F^{\TRone},F^{\TRtwo},F^{\TRone\,\TRone})\|_{\bfz,(\beta,\gamma)}$ with respect to $\|(y-\tilde{y},y^{\TRone}-\tilde{y}^{\TRone},y^{\TRtwo}-\tilde{y}^{\TRtwo},0)\|_{\bfz,(\beta,\gamma)}$. Recalling that  $F=f(y)-f(\tilde{y})$ and the definition \eqref{1_{st} derivative}-\eqref{2_{nd} derivative}, we can rewrite $\mathbf{F}$ as
\begin{multline}\label{expression of F}
\mathbf{F}=\Big(f(y)-f(\tilde{y}),f(y)f^{\prime}(y)-f(\tilde{y})f^{\prime}(\tilde{y}), f(y)f^{\prime}(y)f^{\prime}(y)-f(\tilde{y})f^{\prime}(\tilde{y})f^{\prime}(\tilde{y}),
\\ \frac{1}{2}f(y)f(y)f^{\prime\prime}(y)-\frac{1}{2}f(\tilde{y})f(\tilde{y})f^{\prime\prime}(\tilde{y})\Big).
\end{multline}
 The strategy to bound $\|\mathbf{F}\|_{\bfz,(\beta,\gamma)}=\|(F,F^{\TRone},F^{\TRtwo},F^{\TRone\,\TRone})\|_{\bfz,(\beta,\gamma)}$ as given in \eqref{expression of F} is very similar to the classical rough path case as explained in \cite{FriHai}. Due to the fact that both $\mathbf{y}$ and $\mathbf{\tilde{y}}$ sit in the ball $\mathcal{B}_{T}$ defined by \eqref{ib}, we let the patient reader to check that there exists a constant $\tilde{C}=\tilde{C}_{M,\alpha,\gamma,\|f\|_{C_{b}^{5}}}$ such that  
\begin{flalign}\label{req4}
\left\|\left(F,F^{\TRone},F^{\TRtwo},F^{\TRone\,\TRone}\right)\right\|_{\mathbf{z},\left(\beta,\gamma\right)}&\leq \tilde{C}\left\|\left(y-\tilde{y},y^{\TRone}-\tilde{y}^{\TRone},y^{\TRtwo}-\tilde{y}^{\TRtwo},0\right)\right\|_{\mathbf{z},(\beta,\gamma)}. 
\end{flalign} 
Reparting \eqref{req4} into \eqref{req3}, we thus get the existence of a constant $C$ such that 
\begin{equation}\label{4114}
\left\|\mathcal{M}_{\hat{T}}\left(y-\tilde{y},y^{\TRone}-\tilde{y}^{\TRone},y^{\TRtwo}-\tilde{y}^{\TRtwo},0\right)\right\|_{\mathbf{z}, \left(\beta,\gamma\right)}
\leq CM\left\|\left(y-\tilde{y},y^{\TRone}-\tilde{y}^{\TRone},y^{\TRtwo}-\tilde{y}^{\TRtwo},0\right)\right\|_{\mathbf{z},\left(\beta,\gamma\right)}\hat{T}^{\alpha-\beta}.
 \end{equation}
By choosing $\hat{T}$ small enough such that $q\equiv CM\hat{T}^{\alpha-\beta}<1$, we can recast \eqref{4114} as 
\begin{equation*}
\left\|\mathcal{M}_{\hat{T}}\left(y-\tilde{y},y^{\TRone}-\tilde{y}^{\TRone},y^{\TRtwo}-\tilde{y}^{\TRtwo},0\right)\right\|_{\mathbf{z},\left(\beta,\gamma\right)}\leq q\left\|\left(y-\tilde{y},y^{\TRone}-\tilde{y}^{\TRone},y^{\TRtwo}-\tilde{y}^{\TRtwo},0\right)\right\|_{\mathbf{z},\left(\beta,\gamma\right)}.
\end{equation*}
 It follows that $\mathcal{M}_{\bar{T}}$ is contractive on $ \mathcal{D}_{\mathbf{z}}^{(\beta,\gamma)}(\Delta_{2}^{T}([0,\bar{T}]);\mathbb{R}^{m})$, which completes the proof of Step 2.\\
Combining Step 1 and Step 2, we have proved that if a small enough $\hat{T}$ is chosen then $\mathcal{M}_{\hat{T}}$ admits a unique fixed point $\mathbf{y}=(y,y^{\TRone},y^{\TRtwo},0)$ in the ball $\mathcal{B}_{\hat{T}}$ defined by \eqref{ib}. This fixed point is the unique solution to \eqref{volterra eq in thm} in $\mathcal{B}_{\hat{T}}$. 
In addition, owing to \eqref{expression of Q} plus the fact that $f,f'$ are uniformly bounded, it is easily proved  that the choice of $\hat{T}$ can again be done uniformly in the starting point $y_{0}$. Hence the solution on $[0,T]$ is constructed iteratively on intervals $[k\hat{T}, \, (k+1)\hat{T}]$. The proof of Theorem \ref{volterra eq thm} is now finished.
\end{proof}

\bigskip


\end{document}